\newtheorem{thm}{Theorem}[section]
\newtheorem{cor}[thm]{Corollary}
\newtheorem{lem}[thm]{Lemma}
\newtheorem{prop}[thm]{Proposition}
\theoremstyle{definition}
\newtheorem{defi}[thm]{Definition}
\theoremstyle{remark}
\newtheorem{nota}[thm]{Notation}
\newtheorem{rem}[thm]{Remark}
\title{The  lower plus and upper plus constructions for fibered biset functors }
\author[]{J. Miguel Calderón \\
calderonl@matmor.unam.mx
}
\affil[1]{PCCM-UNAM-UMSNH\\
Morelia Michoacán Mexico }
\begin{document}
\maketitle
MSC: 19A22  20C15
20C20

https://orcid.org/0009-0007-4685-8837

\abstract

let $A$ be an abelian group, not necessarily finite. The main objective of  this paper  is to provide two constructions for a fibered $A$-biset functor. The first  is the lower plus construction, and the other is the upper plus  construction. These constructions coincide with the lower plus  and upper plus  constructions for biset functors (see \cite{fibered}) when the fiber is the trivial group  $\lbrace \cdot \rbrace$. To generate these constructions, it is necessary to have a family $\mathcal{G}$  of finite groups and a function $\mathcal{S}$  such that, for all $G, H \in \mathcal{G}$, we relate them to a set $\mathcal{S}(G, H)$ of
 $$\mathcal{M}^A (G,  H)= \mathcal{M}(G,H):=\lbrace (L,\varphi) \mid L\leq G\times H , \ \varphi \in Hom(L,A)\rbrace $$
The pair $\mathcal{(G,S)}$ 
 satisfies  certain axioms outlined in Definition  \ref{axioms}. These axioms are essential for enabling the pair $\mathcal{(G, S)}$ to generate the subcategory $\mathcal{D}$ of the category of fibered $A$-bisets, denoted as $\mathcal{C}^A.$. In the Section \ref{sect Fibered biset}, we recall and extend the basic definitions and results for fibered bisets and fibered biset functors. In Section  \ref{sect low and upper plus}, the upper and lower constructions are defined. For this purpose, the pair$\mathcal{(G, S)}$ is associated with $\mathcal{(G, S_+ )}$ and $\mathcal{(G, S^+ )}$, with which we can construct the subcategories $\mathcal{D}_+$ and  $\mathcal{D}^+$  of $\mathcal{C}^A$. In Sections \ref{sect functor lower } and \ref{sect functor upper }, the constructions $F_+$ and $F^+$ are described for a fibered $A$-biset functor  $F$ over $\mathcal{D}$, which are fibered biset functors over $\mathcal{D_+}$  and   $\mathcal{ D^+}$  respectively. In Section \ref{sect  mark morphism}, the mark morphism $F_+ \longrightarrow F^+$  is defined, which is a natural transformation. Necessary conditions on the ring $R$ are also provided for the mark morphism to be injective or even bijective. In Section \ref{sect Green funct} the additional condition that $F$ has a multiplicative structure is added, more precisely, that $F$ is a fibered Green functor over $\mathcal{D}$. Then it can be shown that $F_+$ and $F^+$ inherit the structure of fibered Green functors over $\mathcal{D_+}$ and  $\mathcal{D^+ }$, respectively, and the mark morphism is multiplicative. In Section \ref{example fibre}, the ring of characters is defined as a fibered biset functor, and its construction  $-_+$, which is the global functor of fibered representations, is calculated. Finally, in Section \ref{adjuncion}, it is provedd that the functor related to the lower construction, denoted by $-_+$, is the left adjoint for a restriction functor.

{\small{\bf Keywords}{:} 
Group, The character of a group, biset, biset functor, Fibered biset functor.
}

\section{Introduction}

In \cite{serge-biset} and \cite{bisetscategory}, Serge Bouc introduced and developed the theory of biset functors, which provides a framework for situations where structural such as
restriction, induction, inflation, and deflation, or a subset of them, are present.  Biset functors find applications in various representation rings, such as the Burnside ring, the character ring, the  Green ring, and the  trivial source ring of a finite group $G$.\\
 Furthermore, Rober Boltje, Alberto G Raggi-Cárdenas and  Luis Valero-Elizondo introduced and developed  the constructions lower plus and upper plus by  bisets functor in  \cite{constructios+}. These  constructions are the first step to extend the general framework of
canonical induction formulas, introduced by  Boltje (see  \cite{canonicalinductionformulae}).  the $-^+$-construction  and the
$-_+$-construction to the framework of biset functors will allow rewriting the theory of
canonical induction formulas and derive additional properties, as for example that they
commute not only with restrictions,  but also with inflation.\\
 On the other hand,  Dress introduced the theory of fibered bisets in \cite{dressRing}, and Rober Boltje and Olcay  Co{\c{s}}kun  further developed the theory of fibered biset functors in \cite{fibered}. This  is similar to theory of  biset functors, the motivation for this theory comes from the fact that representation rings of finite groups
carry more structure, by considering multiplication with one-dimensional representations as structural maps.\\
Let $A$ be a multiplicative written abelian group. The $A$-fibered Burnside ring $B^A(G)$
is defined as  the Grothendieck group of $A$-fibered $G$-sets, i.e., $G \times A$-sets with finitely many $A$-orbits
which are free as $A$-sets,  A  $\mathbb{Z}$-basis of $B^A(G)$ is parameterized  by $G$-conjugacy classes of pairs 
 $(H, \varphi)$, where $H$ is a subgroup of $G$ and $\varphi \in Hom(H, A).$ Similarly,  one defines
$A$-fibered $(G, H)$-bisets and their Grothendieck groups $B^A(G, H) := B^A(G \times H)$. The category fibered bisets allow a tensor product construction.  giving  rise to the $A$-fibered biset category  over a commutative ring $R$, denoted by $\mathcal{C}^A_R$, one remark is that  the $A$ -fibered l (also called monomial) Burnside ring is the $-_+$-construction applied to the biset functor mapping a finite group $G$ to the free $\mathbb{Z} $-module with basis $Hom(G, A)$.\\
There exists a natural embedding $B(G) \longrightarrow B^A(G)$ and  a  natural splitting map
$B^A(G) \longrightarrow B(G) $ of this embedding in the category of rings. This embedding allows us  to view the biset category as a subcategory of the $A$-fibered biset category. To view an $A$-fibered biset functor $F$ via restriction as a biset functor means forgetting some of its structure.

The main objective  of this paper is to introduce  two constructions, lower plus and upper plus, for an $A$-fibered biset  $F$. These constructions are specifically designed such that when $F$ is viewed as a biset functor via restriction, they coincide with the lower plus and upper plus constructions for biset functors.  We begin  by selecting a family $\mathcal{G}$ of finite groups. For every pair of groups $G$ and $H$ from $\mathcal{G}$,  a
choice of a set $\mathcal{S}(G, H)$ of $$\mathcal{M}^A (G,  H)= \mathcal{M}(G,H):=\lbrace (L,\varphi) \mid L\leq G\times H , \ \varphi \in Hom(L,A)\rbrace, $$ These choice of sets satisfy certain axioms that lead to the construction of a category $\mathcal{D}$, which is a subcategory of the $A$-fibered biset category $\mathcal{C}^A$.  \\
In the  Section \ref{sect Fibered biset} we provide a review of the  fundamental definitions and facts related to   fibered bisets and fibered biset functors. This section severs as a reminder of the basic concepts and results necessary for  the subsequent constructions. In the Section \ref{sect low and upper plus} traduces the construction of associated pairs  $ \mathcal{(G, S_+ )}$ and
$\mathcal{(G, S^+ )}$ from the given  pair $\mathcal{(G, S)}$, which lead to subcategories $\mathcal{D}_+$ and $\mathcal{D}^+$ of $\mathcal{C}^A$. In the Sections \ref{sect functor lower } and \ref{sect functor upper } describe the construction of the biset functor $F_+$ on $\mathcal{D_+}$ and the biset functor $F^+$ on $\mathcal{ D^+}$ associated to an $A$-fibered biset functor $F$ on $\mathcal{D}$. 
In the Section \ref{sect  mark morphism} we introduce  the mark morphism as a natural transformation from  $F_+$  to $F^+$. We prove that, under certain conditions on the base ring, the mark morphism is injective, or even bijective. In the Section \ref{sect Green funct} deals with the situation that $F$  is a Green  fibered biset functor over $\mathcal{D}$. We proved that both  $F_+$   and $F^+$  inherit Green fibered biset functor structure  over $\mathcal{D_+}$ and $\mathcal{D^+ }$ respectively. Furthermore, we establish that the mark morphism preserves the multiplicative structure. In the section \ref{example fibre} we introduce the global representation fibered ring functor, which is a fibered biset functor. We proved that this functor corresponds to the lower plus construction applied to the character ring viewed as a fibered biset functor. For end, in the Section \ref{adjuncion}, we prove that the functor $-_+$ is the  left adjunction  of one functor Restriction.

\section{Fibered bisets }\label{sect Fibered biset}
We start this section with notation that  will  be used  throughout this paper:  We fix a multiplicatively written abelian group $A$, where $R$ denotes a commutative ring with unity,  and  $G$, $H$, and $K$ denote finite groups.
We will now briefly review the concepts and fundamental results related to  $A$-fibered bisets, the $A$-fibered biset category,
and $A$-fibered biset functors, as presented in chapters 2 and 3 of \cite{serge-biset}] and Sections 1–3 of  \cite{fibered}.\\
For any finite group $G$, we define
 \begin{align*}
 G^\ast := Hom (G, A)
 \end{align*}
and consider  $G^\ast $ as an abelian group with point-wise multiplication. 
A left $A$-fibered $G$-set $X$ is a left $(G \times A)$-set with only finitely many $A$-orbits which are free as $A$-sets, i.e., $ax = bx$ for $a, b \in A$ and $x\in X$ implies
$a = b$.  Together with
$(G \times A)$-equivariant maps, one obtains a category $_Gset^A$. Similarly, we define the category $set^A_G$ of right $A$-fibered $G$-sets. We sometimes view a left $A$-fibered $G$-set $X$ also as a right $A$-fibered $G$-set. In this case, the right $A$-action is given by $xa := ax$ and the right $G$-action by $xg := g^{-1}x$ for $a\in A$, $g\in  G$ and $x \in  X$. We view the $A$-action in analogy to the action of the base ring $R$ when considering a module for an $R$-algebra. \\
In the category $_G\text{set}^A$, there is a natural notion of disjoint union, which serves as a categorical coproduct. Moreover, there is a tensor product $X \otimes_A Y \in {_Gset^A}$
defined as the set of $A$-orbits of the direct product $X \times Y$ under the $A$-action $a(x, y) = (xa^{-1}, ay)$. The orbit of $(x, y)$ is denoted by $x\otimes y$.  The gruops  $G$ and $A$   on the tensor product  $X\otimes_AY$ via the following actions:  $a(x\otimes y) = ax \otimes y = x\otimes ay$ and $g(x\otimes y) = gx\otimes gy$
for all $a \in A$, $g \in G$, $x \in X$ and $y \in Y$.\\
The Grothendieck group of $_Gset^A$ with the disjoint
union is denoted by $B^A(G)$, this group possesses a ring structure with $\otimes$ as its multiplication operation.
In $B^A(G)$, the class of $X \in {_Gset^A}$  is denoted by $[X]$.\\ 
 The category of $A$-fibered $(G, H)$-bisets is formally defined as the category   $_{G\times H}set^A$, which  is also  denoted by $_Gset^A_H$.  An $A$-fibered $(G, H)$-biset $X$ is often considered to be equipped with a left
$G$-action, a right $H$-action and a two-sided $A$-action, with  all four actions commuting with each other, his is denoted using fibered biset notation as $gaxhb := ((g, h^{-1}
), ab)x$ for $g \in G$, $h \in H$, $a, b \in A$, and $x \in X$ (fibered biset notation).\\
The tensor product of $A$-fibered bisets is defined as follows: Let $X$ be an object in $_G\text{set}^A_H$ and $Y$ be an object in $H\text{set}^A_K$. The tensor product $X \otimes_{AH} Y$ consists of those $(H\times A)$-orbits of $X \times Y$ under the action $(h,a)(x,y) = (x(ha)^{-1},hay)$ that are $A$-free under the induced $A$-action on these orbits. The orbit of $(x,y)$ is denoted by $x\otimes y$. In fibered biset notation, $xah \otimes y = x\otimes ahy$. The set $X \otimes_{AH} Y$ is an $A$-fibered $(G, K)$-biset via $(g, k, a)(x\otimes y) = (g, a)x\otimes ky$ in formal notation and $ga(x\otimes y)k = gax\otimes yk$in fibered biset notation. 
This construction of the tensor product is associative, meaning that the map  $(x\otimes  y)\otimes z \longrightarrow x\otimes (y \otimes z)$ is
a well-defined isomorphism in $_Gset^A_L$ between $(X \otimes_{AH} Y )\otimes _{AK} Z$ and $X \otimes_{AH} (Y \otimes_{AK} Z)$, whenever
$L$ is a finite group and $Z \in {_Kset^A_L}$. It is functorial in $X$, $Y$ and $Z$.\\

The Grothendieck group of $_Gset^A_H$ with respect to disjoint union,  is denoted  by $B^A(G, H)$ Additionally, we define  $B^A_R(G, H) := R\otimes_\mathbb{Z} B^A(G, H)$.
where $R$ is a commutative ring with unity. The class of an $A$-fibered $(G, H)$-biset $X$ in $B^A(G, H)$ is denoted by $[X]$. We also write $[X]$ for $1 \otimes [X] \in B^A_R(G, H)$.

For a finite group $G$, we denote by $\mathcal{M}^A(G)$ the set of all pairs of the form  $(K, \kappa)$, where $K$ is a subgroup of $G$ and $\kappa \in K^\ast$. In many cases, we will simply write $\mathcal{M}(G)$ instead of $\mathcal{M}^A(G)$. The set $\mathcal{M}(G)$ has a poset structure defined by $(L,\phi) \leq (K, \kappa)$ if $L \leq K$ and $\phi = \kappa\vert_L$. Moreover, $G$ acts on $\mathcal{M}(G)$ by conjugation, denoted by
\begin{align*}
^g  (K, \kappa):= (^g K, ^g \kappa)
\end{align*}
for $g\in G$,  where $^g\kappa : ^g K \longrightarrow A$ such that  $^g\kappa (^g k) = \kappa (k)$ for all  $k\in K$.
Let $H$ be another finite group, and consider $(U, \varphi) \in \mathcal{M}(G \times H)$. We denote the projection maps $p_1 : G \times H \longrightarrow G$ and $p_2 : G \times H \longrightarrow H$. Furthermore, we define
\begin{align*}
k_1(U) = \lbrace g\in G \mid (g,1) \in U \rbrace \ \ \text{ and } \ \ k_2(U) = \lbrace h\in H \mid (1,h)\in U \rbrace 
\end{align*}
We define 
\begin{align*}
\varphi_1 : k_1(U) \longrightarrow A&  &\text{ and }&  &\varphi_2 : &k_2(U) \longrightarrow A\\
\text{ } g \longmapsto \varphi((g,1))&   &\text{ }& &\text{   } \  \ &h\longmapsto \varphi((1,h))^{-1}.
\end{align*}
Let  $V$  a subgroup of $H\times K$, one sets
\begin{align*}
U \ast V := \lbrace(g, k) \in  G \times K \mid \exists h \in H \ : \  (g, h) \in  U, (h, k) \in V \rbrace
\end{align*}
a subgroup of $G \times K$. Moreover, if $(V,\psi) \in \mathcal{M}(H\times K) $
 with $$\varphi_2|_{k_2(U)\cup\cap k_1(V )} = \psi_1|_{k_2(U)\cup\cap k_1(V )},
$$ then
one obtains a homomorphism $\varphi \ast \psi \in Hom(U \ast V, A)$ defined by
\begin{align*}
(\varphi \ast \psi)(g, k) := \varphi(g, h)\psi(h, k),
\end{align*}
where $h \in H$ is chosen such that $(g, h) \in U$ and $(h, k)\in V$.\\
\textbf{Stabilizing pairs}. Let $X$ be an $A$-fibered $(G, H)$-biset. We will denote the $A$-
orbit of the element $x\in  X$  by $[x]$. Note that $G \times H$ acts on the set of $A$-orbits of $X$ by
$(g, h)[x] := [(g, h)x]$. For $x \in X$, let $(G\times H)_x \leq G \times H$ denote the stabilizer of $[x]$.\\
We define a map
\begin{align*}
X\longrightarrow &\mathcal{M}(G\times H)\\
x\longmapsto &((G\times H)_x, \phi_x),
\end{align*}
with
\begin{align*}
\phi_x : (G\times H)_x \longrightarrow A
\end{align*}
determined by the equation
\begin{align*}
(g, h)x = \phi_x (g, h)x
\end{align*}
for $(g, h)\in (G\times H)_x$ . We call $((G\times H)_x , \phi_x )$ the stabilizing pair of $x$.\\
Let $X$ be an $A$-fibered $(G, H)$-biset. It is
clear that the $A \times G \times H$-action on $X$ is transitive if and only if the $G \times H$-action
on the set of $A$-orbits of $X$ is transitive. In this case, we call $X$ a transitive $A$-fibered
$(G, H)$-biset. Moreover, if $X$ is a  transitive $A$-fibered
$(G, H)$-biset,   we have the next isomorphism of $A$-fibers  $(G, H)$-biset:

\begin{align*}
X\cong \frac{G\times H \times A}{ \lbrace (g,h,\phi_x((g,h))^{-1}) \mid (g,h)\in (G\times H)_x \rbrace}:=\dfrac{G\times H}{(G\times H)_x, \phi_x}.
\end{align*}
for all $x\in X$.

\begin{thm}[Corollary 2.5 of \cite{fibered}]\label{mackey}
For $(U, \varphi)\in \mathcal{M}(G \times H)$ and $(V, \psi) \in \mathcal{M}(H \times K)$ one
has
\begin{align*}
\left[  \frac{G\times H}{ U, \varphi}\right] \otimes_{AH}\left[ \frac{H\times K}{V, \psi }\right] =\sum_{\substack{ t\in [p_2(U)\backslash H/p_1(V)]\\ \varphi_2 \vert_ {H_t}={^t\psi_2\vert_{H_t} }}} \left[ \frac{G\times K}{U\ast {^{{(t,1)} V}}, \varphi \ast {^{{(t,1)} \psi}}}\right] 
\end{align*}
where $H_t = k_2(U) \cap ^tk_1(V)$.
\end{thm}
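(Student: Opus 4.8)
The plan is to compute the left-hand side directly from the definitions of the tensor product $\otimes_{AH}$ and of stabilizing pairs, following the pattern of the classical (non-fibered) Mackey formula and isolating the one genuinely new phenomenon, namely the requirement that an orbit be $A$-free. First I would fix base points: choose $x_0$ in $X:=\frac{G\times H}{U,\varphi}$ with stabilizing pair $(U,\varphi)$ and $y_0$ in $Y:=\frac{H\times K}{V,\psi}$ with stabilizing pair $(V,\psi)$, which exist by transitivity. Since $X$ and $Y$ are transitive, every $A$-orbit of $X\otimes_{AH}Y$ has a representative of the form $(g,1)x_0\otimes (1,k)y_0$; absorbing the left $G$-action and the right $K$-action into the eventual $(G,K)$-orbit, it suffices to understand the orbits of the elements $x_0\otimes (t,1)y_0$ for $t\in H$, where I repeatedly use the defining relation $x_0\,a h\otimes y_0 = x_0\otimes ah y_0$ to move $H$ and $A$ across the tensor symbol.

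Forgetting the fiber $A$ and applying the usual Mackey double-coset argument to the underlying $(G,H)$- and $(H,K)$-sets, the $(G\times K)$-orbits on the set of $A$-orbits of $X\otimes_{AH}Y$ are indexed by the double cosets $[p_2(U)\backslash H/p_1(V)]$, with $t$ labelling the orbit of $x_0\otimes(t,1)y_0$; here the relevant subgroups $p_2(U)$ and $p_1(V)$ are exactly the (left, resp. right) $H$-stabilizers of $[x_0]$ and $[y_0]$ modulo $A$.

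The fibered heart of the argument is then, for each $t$, to decide when the $A$-orbit of $x_0\otimes(t,1)y_0$ is $A$-free and to compute its stabilizing pair. A scalar $a\in A$ fixes this orbit iff, after moving it across the tensor symbol via $x_0\,a h\otimes y_0 = x_0\otimes ah y_0$ and comparing with the stabilizing equations $(g,h)x_0=\varphi(g,h)x_0$ and $(h,k)\,(t,1)y_0=({}^{(t,1)}\psi)(h,k)\,(t,1)y_0$, it can be written as a discrepancy $\varphi_2(h)\,({}^{t}\psi_1(h))^{-1}$ for some $h\in H_t=k_2(U)\cap{}^{t}k_1(V)$. Hence the orbit is $A$-free exactly when $\varphi_2$ and ${}^{t}\psi_1$ agree on $H_t$, which is precisely the compatibility condition indexing the sum; when they disagree the orbit is not $A$-free and is discarded from the tensor product, contributing nothing. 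For a surviving $t$, the same scalar bookkeeping identifies the $(G\times K)$-stabilizer of $[x_0\otimes(t,1)y_0]$ with $U\ast{}^{(t,1)}V$ and its character with $\varphi\ast{}^{(t,1)}\psi$, using the very definitions of $\ast$ on subgroups and on homomorphisms, so that by the transitive-biset normal form this orbit generates the summand $\frac{G\times K}{U\ast{}^{(t,1)}V,\varphi\ast{}^{(t,1)}\psi}$.

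Finally, summing over a set of double-coset representatives $t$ satisfying the compatibility condition, and noting that distinct representatives give disjoint $(G\times K)$-orbits, assembles $X\otimes_{AH}Y$ as the claimed disjoint union, which is the desired identity in $B^A(G,K)$. I expect the main obstacle to be the scalar bookkeeping in the third step: keeping track of the $A$-values as they pass through the tensor relation and correctly matching the inverse conventions built into $\varphi_2$ and $\psi_1$, since it is exactly there that the $A$-freeness dichotomy, the one feature absent from the classical Mackey formula, is forced.
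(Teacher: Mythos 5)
The paper gives no proof of this statement: it is imported verbatim as Corollary~2.5 of \cite{fibered}, so there is no internal argument to compare yours against. Your proof is correct and is essentially the standard one from that reference: reduce by transitivity to representatives $x_0\otimes (t,1)y_0$, index the $(G\times K)$-orbits of $A$-orbits by the double cosets $p_2(U)\backslash H/p_1(V)$, compute that the $A$-stabilizer of the orbit of $x_0\otimes(t,1)y_0$ is the image of $h\mapsto \varphi_2(h)\,\bigl({}^{t}\psi_1(h)\bigr)^{-1}$ on $H_t$ (so the orbit survives in $X\otimes_{AH}Y$ precisely when $\varphi_2|_{H_t}={}^{t}\psi_1|_{H_t}$), and identify the stabilizing pair of a surviving orbit as $(U\ast{}^{(t,1)}V,\varphi\ast{}^{(t,1)}\psi)$, which is exactly Lemma~\ref{estProd}. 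Two remarks. First, your compatibility condition involves ${}^{t}\psi_1$, whereas the statement as printed has ${}^{t}\psi_2$; yours is the correct reading (and matches the source): $\psi_2$ is defined on $k_2(V)\leq K$, so its restriction to $H_t\leq H$ does not even typecheck, and the definition of $\varphi\ast\psi$ in Section~\ref{sect Fibered biset} requires agreement of $\varphi_2$ with $\psi_1$ on $k_2(U)\cap k_1(V)$. Second, the phrase ``every $A$-orbit has a representative of the form $(g,1)x_0\otimes(1,k)y_0$'' omits the middle $H$-translate; what transitivity actually yields is a representative $g\,(x_0\otimes t y_0)\,k$, which is what you use in the following sentence, so this is only a slip of wording, not a gap.
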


We define the $A$-fibered biset category $\mathcal{C}^A_R$ as the category whose objects are finite groups, and whose morphisms are given by $$Hom_{\mathcal{C}^A_R}(H, G) := B^A_R(G, H).$$ The composition of morphisms is defined as the tensor product of $A$-fibered bisets. The identity morphism of an object $G$ is represented by the element $\left[G\times G/ \Delta(G), 1\right] \in B^A_R(G,G)$, here, $\Delta(G) := \lbrace (g, g) | g \in G \rbrace$. Which also corresponds to the class of the $A$-fibered $(G,G)$-biset $G\times A$ with the multiplication actions from both sides.

In many cases, we will use $\mathcal{C}^A$ as a shorthand for $\mathcal{C}^A_R$.

Next, we define an elementary fibered biset as follows: For $K \leq G$, we set:
\begin{align*}
Res^G_K:= \left[  \frac{K\times G}{\Delta(K), 1} \right]&  &\text{ and }& &I{nd^G_K}:=\left[  \frac{G\times K}{\Delta(K), 1} \right].
\end{align*}
Let $N \unlhd G$ and  $\pi: G \longrightarrow G/N$ the canonical projection, we set:
\begin{align*}
Def^G_{G/N} := \left[ \frac{G/N \times G}{^{(\pi,1)} \Delta(G)} \right]&  &\text{ and }&  &Inf^G_{G/N} :=\left[  \frac{G \times G/N}{^{(1,\pi)} \Delta(G)} \right] 
\end{align*}
where $^{(\pi,1)} \Delta(G):= \lbrace (gN, g) \mid g\in G \rbrace$ and $^{(1, \pi)} \Delta(G):= \lbrace (g, gN) \mid g\in G \rbrace$.
 Finally, if $f : H \longrightarrow G$ is an isomorphism, one sets
 \begin{align*}
 iso(f):= \left[ \frac{G\times H}{ \lbrace(f(h), h) \mid h\in H \rbrace, 1} \right]. 
 \end{align*}
\begin{thm}[Proposition 2.8 of \cite{fibered}]
Let $(U, \varphi) \in \mathcal{M}(G\times H)$ and set $P := p_1(U)$, $Q := p_2(U)$,
$K := ker(\varphi_1)$, and $L := ker(\varphi_2)$. Then $K \unlhd P$, $L \unlhd Q$, $K \times L  \unlhd U$, and
\footnotesize{
\begin{align*}
\left[ \frac{G\times H}{ U, \varphi}\right] = Ind^G_P \otimes_{AP} Inf^P_{P/K} \otimes_{A(P/K)} \left[\frac{P/K\times Q/L}{U/(K\times L), \overline{\varphi}} \right] \otimes_{A(Q/L)} Def^Q_{Q/L} \otimes_{AQ} Res^H_Q 
\end{align*}}\normalsize{
where $\overline{\varphi} \in  (U/(K \times L))^\ast$
is induced by $\varphi$ and $U/(K \times L)$ is viewed as a subgroup of $P/K \times Q/L$ via
the canonical isomorphism $(P \times Q)/(K \times L)\cong P/K \times Q/L$.}
\end{thm}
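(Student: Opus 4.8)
The plan is to prove the three normality statements by a direct conjugation argument, and then to evaluate the right-hand side by composing the five elementary factors one at a time, checking that every application of the Mackey formula (Theorem~\ref{mackey}) collapses to a single transitive term; the final step is to identify the resulting pair with $(U,\varphi)$.

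First I would establish the structural claims. For $k\in K=\ker(\varphi_1)$ and $p\in P=p_1(U)$, choose $h\in H$ with $(p,h)\in U$; then $(p,h)(k,1)(p,h)^{-1}=(pkp^{-1},1)\in U$, and since $A$ is abelian and $\varphi$ is a homomorphism, $\varphi(pkp^{-1},1)=\varphi(p,h)\varphi(k,1)\varphi(p,h)^{-1}=\varphi_1(k)=1$, so $pkp^{-1}\in K$; hence $K\unlhd P$, and symmetrically $L\unlhd Q$. The same computation applied to $(k,l)\in K\times L$ (using $(k,1),(1,l)\in U$) gives $K\times L\subseteq U$ and $K\times L\unlhd U$. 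Finally $\varphi(k,l)=\varphi_1(k)\varphi_2(l)^{-1}=1$, so $\varphi$ is trivial on $K\times L$ and therefore induces $\bar\varphi\in(U/(K\times L))^\ast$ with $\bar\varphi(gK,hL)=\varphi(g,h)$; this is exactly the meaning of ``induced by $\varphi$.''

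Next I would compute the composite. Writing the five factors as transitive bisets with subgroups $\Delta(P)\le G\times P$, ${}^{(1,\pi)}\Delta(P)\le P\times P/K$, $\bar U\le P/K\times Q/L$, ${}^{(\pi,1)}\Delta(Q)\le Q/L\times Q$, and $\Delta(Q)\le Q\times H$ (the first, second, fourth and fifth carrying the trivial fiber), I would compose them from left to right by Theorem~\ref{mackey}. At each step one of the two factors is a diagonal-type elementary biset for which the relevant component $k_2(\cdot)$ or $k_1(\cdot)$ is trivial while the relevant projection is surjective: the triviality comes from the \emph{first} factor in the $\mathrm{Ind}\otimes\mathrm{Inf}$ and the $\mathrm{Inf}\otimes(\text{middle})$ steps, and from the \emph{second} factor ($\mathrm{Def}$, resp.\ $\mathrm{Res}$) in the remaining two steps. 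Consequently the indexing set of double cosets is the single point $t=1$, and the fiber-matching condition $\varphi_2|_{H_t}={}^t\psi_2|_{H_t}$ holds vacuously because $H_t=1$. Thus each composition yields exactly one transitive summand, and by associativity of the tensor product the whole right-hand side equals $\left[\frac{G\times H}{W,\chi}\right]$ with $W=\Delta(P)\ast{}^{(1,\pi)}\Delta(P)\ast\bar U\ast{}^{(\pi,1)}\Delta(Q)\ast\Delta(Q)$ and $\chi$ the corresponding iterated $\ast$-product of the fibers.

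It then remains to identify $(W,\chi)$ with $(U,\varphi)$. Tracing the $\ast$-products, a pair $(g,h)$ lies in $W$ iff $g\in P$, $h\in Q$, and $(gK,hL)\in\bar U$; since $K\times L\le U$, the preimage of $\bar U$ under $P\times Q\to P/K\times Q/L$ is exactly $U$, so $W=U$. For the fiber, the four trivial fibers pass through and the unique connecting path through the intermediate groups contributes only $\bar\varphi(gK,hL)=\varphi(g,h)$, giving $\chi=\varphi$. I expect the main obstacle to be the bookkeeping of the nontrivial fiber $\bar\varphi$ across the four compositions: one must confirm at each step that the surviving double coset is $t=1$ and that $H_t$ is trivial so the Mackey fiber condition imposes nothing, and then verify that the iterated $\ast$ of fibers restricts back to $\varphi$ itself rather than to some twist of it.
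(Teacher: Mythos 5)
Your proposal is correct, but note that the paper itself offers no proof of this statement: it is imported verbatim as Proposition 2.8 of \cite{fibered}, so there is no in-paper argument to compare against. What you have written is essentially the standard proof (and the one given in the cited source): the normality claims follow from the conjugation computation you give, using that $A$ is abelian so that $\varphi(pkp^{-1},1)=\varphi(k,1)$, and the identity $\varphi(k,l)=\varphi_1(k)\varphi_2(l)^{-1}=1$ shows $\varphi$ factors through $U/(K\times L)$, so $\overline{\varphi}$ is well defined. Your evaluation of the right-hand side is also sound: at each of the four applications of Theorem \ref{mackey} one of the two factors has trivial $k_2$ (for $\Delta(P)$ and for the composite $\lbrace (p,pK)\rbrace$) or trivial $k_1$ (for ${}^{(\pi,1)}\Delta(Q)$ and for $\Delta(Q)$), while the inner projections are surjective onto the middle group, so the double-coset index set is a point, $H_t=1$, the fiber-compatibility condition is vacuous, and the $\ast$-product of fibers is well defined without any choice. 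The identifications $\Delta(P)\ast{}^{(1,\pi)}\Delta(P)\ast\overline{U}\ast{}^{(\pi,1)}\Delta(Q)\ast\Delta(Q)=U$ (using $K\times L\leq U$ so the preimage of $\overline{U}$ is $U$) and $\chi=\varphi$ (using $\varphi|_{K\times L}=1$) are exactly right. The only thing I would ask you to make fully explicit is the well-definedness of each intermediate fiber, e.g.\ that $(p,hL)\mapsto\varphi(p,h)$ does not depend on the representative $h$ of $hL$ with $(p,h)\in U$; this follows from $\varphi(1,l)=1$ for $l\in L$, and you have all the ingredients for it.
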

\begin{lem}\label{estProd}
Let  $G$, $H$ and  $K$ be finite groups. Let  $U$ be an  $A$-fibered $(G,H)$-biset  and let  $V$ be an $A$-fibered    $(H,K)$-biset, then 
\begin{align*} 
((G\times K)_{u\otimes v}, \phi_{u\otimes v})=((G\times H)_u,\phi_u) \ast ((H\times K)_v, \phi_v)
\end{align*}
for all  $u\otimes v\in U \otimes_{AH} V$.
\end{lem}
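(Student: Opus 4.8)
The plan is to prove the asserted equality of pairs in $\mathcal{M}(G\times K)$ by establishing separately the equality of the two subgroups of $G\times K$ and the equality of the two homomorphisms to $A$, working directly from the definition of the tensor product $U\otimes_{AH}V$ and the defining relation $(g,h)x=\phi_x(g,h)\,x$ of a stabilizing pair. Throughout I will use the translation between the formal $G\times H$-action and the fibered biset notation, namely that $(g,h)\in G\times H$ acts on $x$ as $gxh^{-1}$, so that $(g,h)\in(G\times H)_x$ exactly when $gxh^{-1}=\phi_x(g,h)\,x$ for the unique element $\phi_x(g,h)\in A$. I will write $S:=(G\times H)_u$, $T:=(H\times K)_v$, $\varphi:=\phi_u$ and $\psi:=\phi_v$, so that what must be shown is $(G\times K)_{u\otimes v}=S\ast T$ together with $\phi_{u\otimes v}=\varphi\ast\psi$.

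First I would verify that the $\ast$-product $(S\ast T,\varphi\ast\psi)$ is actually defined, i.e. that $\varphi_2$ and $\psi_1$ agree on $k_2(S)\cap k_1(T)$; this is where the $A$-freeness of $u\otimes v$ enters. For $h_0$ in this intersection one has $(1,h_0)\in S$ and $(h_0,1)\in T$, so that $uh_0^{-1}=\varphi(1,h_0)\,u$ and $h_0v=\psi(h_0,1)\,v$. Substituting these into the tensor relation $uh_0^{-1}\otimes h_0v=u\otimes v$ yields $\varphi(1,h_0)\psi(h_0,1)\,(u\otimes v)=u\otimes v$, and since $u\otimes v$ is $A$-free this forces $\varphi(1,h_0)\psi(h_0,1)=1$, which is precisely the required identity $\varphi_2(h_0)=\psi_1(h_0)$.

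Next I would prove the inclusion $S\ast T\subseteq (G\times K)_{u\otimes v}$ and the formula for $\phi_{u\otimes v}$ on $S\ast T$ in a single computation. Given $(g,k)\in S\ast T$, choose $h\in H$ with $(g,h)\in S$ and $(h,k)\in T$; then $gu=\varphi(g,h)\,uh$ and $vk^{-1}=\psi(h,k)\,h^{-1}v$, whence
\[
g(u\otimes v)k^{-1}=gu\otimes vk^{-1}=\varphi(g,h)\psi(h,k)\,(uh\otimes h^{-1}v)=\varphi(g,h)\psi(h,k)\,(u\otimes v),
\]
using $uh\otimes h^{-1}v=u\otimes v$. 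This shows $(g,k)\in(G\times K)_{u\otimes v}$ with $\phi_{u\otimes v}(g,k)=\varphi(g,h)\psi(h,k)=(\varphi\ast\psi)(g,k)$; in particular the value $(\varphi\ast\psi)(g,k)$ is independent of the chosen $h$, since it equals the intrinsic quantity $\phi_{u\otimes v}(g,k)$.

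Finally I would prove the reverse inclusion. If $(g,k)\in(G\times K)_{u\otimes v}$, then $gu\otimes vk^{-1}=a\,(u\otimes v)=au\otimes v$ with $a=\phi_{u\otimes v}(g,k)$, so the pairs $(gu,vk^{-1})$ and $(au,v)$ lie in the same $(H\times A)$-orbit of $U\times V$. Unwinding the defining action $(h,b)(x,y)=(x(hb)^{-1},hby)$ produces $h\in H$ and $b\in A$ with $gu=au(hb)^{-1}$ and $vk^{-1}=hbv$; rewriting these as $gu(h^{-1})^{-1}=(ab^{-1})u$ and $h^{-1}vk^{-1}=bv$ shows that $(g,h^{-1})\in S$ and $(h^{-1},k)\in T$, so $(g,k)\in S\ast T$ with witness $h^{-1}$. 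Combining the two inclusions gives $(G\times K)_{u\otimes v}=S\ast T$, and the formula from the previous step gives $\phi_{u\otimes v}=\varphi\ast\psi$ on this common subgroup, identifying the two pairs. The main obstacle throughout is bookkeeping: keeping the formal-versus-fibered inversion and the two-sided $A$-action straight, and recognizing that the same $A$-freeness of $u\otimes v$ underlies both the well-definedness of $\varphi\ast\psi$ and the reverse inclusion.
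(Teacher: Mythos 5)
Your proof is correct. Note that the paper states Lemma \ref{estProd} in its preliminaries without supplying any proof (it is essentially imported from the literature on fibered bisets), so there is no argument in the paper to compare against; your direct element-wise verification --- checking the compatibility of $\phi_{u,2}$ and $\phi_{v,1}$ on $k_2((G\times H)_u)\cap k_1((H\times K)_v)$ via the $A$-freeness of the orbit of $u\otimes v$, then establishing both inclusions between $(G\times K)_{u\otimes v}$ and $(G\times H)_u\ast(H\times K)_v$ together with the formula for $\phi_{u\otimes v}$ --- is the standard and complete way to do it, and your bookkeeping of the formal versus fibered notation and of the $(H\times A)$-orbit relation in the reverse inclusion is accurate.
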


\section{The lower plus  and upper plus  constructions on subcategories $\mathcal{D}$ of $\mathcal{C}$} \label{sect low and upper plus}
The objective of this section is to build the subcategories  $\mathcal{D_+}$ and $\mathcal{D}^+$ of the category of fibered bisets.These categories are derived from a subcategory $ \mathcal { D } $ of the category of fibered bisets, which is defined as the smallest subcategory containing $ \mathcal { D } $ and the morphisms $ Res $ or $ Ind $. This construction is a generalization of the constructions in [ \cite{ constructios+}. Section 3]\\
If $\mathcal{ G}$ is a class of finite groups. For  $G\in \mathcal{G}$,  we define 
\begin{align*}
\Sigma_{\mathcal{G}} (G) :=\lbrace H\leq G\mid H\in \mathcal{G} \rbrace.
\end{align*}

\begin{defi}\label{axioms}
The data $(\mathcal{G, S})$, consists of $ \mathcal { G } $, which is a class of finite groups, and $ \mathcal { S } $, a family denoted by $ \mathcal { S } =( \mathcal { S } (G,H))_ { G,H \in \mathcal { G } } $. Here, $ \mathcal { S } (G,H) \subseteq \mathcal { M } (G \times H) $ for $ G, H \in \mathcal { G } $. Moreover, we assume that $ ( \mathcal { G, S } ) $ satisfies the following axioms when necessary:
\begin{enumerate}[(i)]
\item For all $G\in \mathcal{G}$,  one has  $(\Delta(G), 1) \in \mathcal{S} (G,G)$.
\item For all  $G$, $H \in \mathcal{G}$ the set $\mathcal{S}(G,H)$ is closet under $(G\times H)$-conjugations.
\item For all  $G, H, K \in \mathcal{G}$ and all  $(V,\phi)\in \mathcal{S}(G,H)$ and $(U,\psi)\in \mathcal{S}(H,K)$ such that 
\begin{align*}
\phi_2\vert_{K_{2}(V)\cap k_1{(U)}}=\psi_1\vert_{K_{2}(V)\cap k_1{(U)}},
\end{align*}
one has  $(V\ast U, \phi \ast \psi) \in \mathcal{S}(G,K)$.
\item For all  $G, H \in \mathcal{G}$, all  $(D,\phi)\in \mathcal{S}(G,H)$ and all  $K\in \sum_{\mathcal{G}}(H)$, one has  $D\ast K \in \mathcal{G}$ and   $(D, \phi) \ast ( \Delta(K), 1)\in \mathcal{S}(D\ast K, K)$.
\item For all  $G\in \mathcal{G}$ and  all $H\in \sum_{\mathcal{G}} (G)$, one has $(\Delta(H),1)\in \mathcal{S}(G,H)$.
\item  For all  $G\in \mathcal{G}$ and all $H\in \sum_{\mathcal{G}} (G)$, one has $(\Delta(H),1)\in \mathcal{S}(H,G)$.
\item  For all $G, H \in \mathcal{G}$ and all  $(D,\phi)\in \mathcal{S}(G,H)$, one has $p_2(D)\in \mathcal{G}$, and for all   $K\in \sum_{\mathcal{G}}(p_2(D))$ one has    $D\ast K \in \mathcal{G}$ and   $(D, \phi)\ast (\Delta(K),  1)$ is element of $ \mathcal{S}(D\ast K, K)$.
\item  For all  $G, H \in \mathcal{G}$ and all  $(D,\phi)\in \mathcal{S}(G,H)$, one has $p_1(D)\in \mathcal{G}$, and for all   $K\in \sum_{\mathcal{G}}(p_1(D))$ one has    $K\ast D \in \mathcal{G}$ and   $(\Delta(K) ,1)\ast (D,  \phi )$ is element of $ \mathcal{S}(K, K\ast D)$.
\end{enumerate} 
In addition, we will say that the pair $\mathcal{(G,S)}$ satisfies condition $k_2$ if, for any groups $G$ and $H$ from $\mathcal{G}$ and for every $(D, \phi) \in \mathcal{S} (G,H)$, it holds that $k_2(D)=\lbrace 1\rbrace$.
\end{defi}
For any  $G \in \mathcal{G}$.  we set 
\begin{align*}
\mathcal{ M}^\prime (G):= \lbrace (H, \lambda) \mid H \in \sum_\mathcal{G} G, \lambda \in H ^\ast \rbrace
\end{align*}
the set $\mathcal{M}^\prime (G)$  is also a poset  with the same  structure of $\mathcal{M} (G).$

\begin{rem} \label{conj}
Let  $(\mathcal{G}, \mathcal{S})$  be such that they satisfy axioms $(i)$ to $(iii)$ and additionally Axiom $(iv)$  or Axiom $(vii)$.  If $G\in \mathcal{G}$, $H\in \sum_{\mathcal{G}}(G)$ and  $g\in G$.  Then  $^g H \in \mathcal{G}$ and $^{(g,1)}(\Delta (H), 1) \in  \mathcal{S}(^gH.H)$.   
\begin{proof}
By Axioms  $(i)$ and  $(ii)$ one has  $^{(g,1)}(\Delta(G),1) \in \mathcal{S}(G,G)$.
Now by Axiom $(iv)$ or $(vii)$ one has 
\begin{align*}
^{(g,1)}\Delta(G)\ast H= {^gH }\in \mathcal{G}
\end{align*}
we obtain 
\begin{align*}
(^{(g,1)}\Delta(G)\ast \Delta(H),^{(g,1)} 1\star 1)= (^{(g,1)}\Delta(H),^{(g,1)} 1) \in \mathcal{S}(^gH,G).
\end{align*}
\end{proof}
\end{rem}
\begin{defi} \label{S}
Let  $(\mathcal{G},\mathcal{S})$  be a set of data satisfying  Axioms $(i)$ to $(iii)$. We define the subcategory   $\mathcal{D=C}^A(\mathcal{G}, \mathcal{S})$ of the $A$-fibered biset category $\mathcal{C}^A$ as follows: The objects of  $\mathcal{D}$ are  $\mathcal{G}$. For any   $G, H \in \mathcal{G}$, 
\begin{align*}
Hom_\mathcal{D}(G,H):=R\otimes_{\mathbb{Z}}\bigg\langle \bigg\lfloor \dfrac{G\times H}{(U,\phi)} \bigg\rfloor \mid (U,\phi)\in \mathcal{S}(G,H) \bigg\rangle_{\mathbb{Z}} \subseteq B^A_R(G,H)
\end{align*}
Let's note that by axiom $(i)$, the element $\left[ \frac{G\times G}{ (\Delta(G), 1)} \right]$ belongs to $Hom_\mathcal{D}(G,G)$, which is the identity morphism of $G$. Furthermore, by axioms $(ii)$ and $(iii)$ and Theorem \ref{mackey}, we have that $\mathcal{D}$ is closed under the composition of $\mathcal{C}^A$.

\end{defi}
Note that, if  $(\mathcal{G},\mathcal{S})$ additionally satisfies the Axiom $(v)$ (resp. $(vi)$) of \ref{axioms}, it  ensures that the category $\mathcal{D}$ contains all possible inductions (resp. restrictions). We say that $(\mathcal{G},\mathcal{S})$ satisfies  the condition $k_2$ if, for any $G, H \in \mathcal{G}$ and $(U,\phi) \in \mathcal{S}(G,H)$,  one has $k_2(U)=\lbrace e \rbrace$.
\begin{defi}\label{defi S_+}
Let  $G, H \in \mathcal{G}$, we define $\mathcal{S_+}(G,H)$ as the set of $\mathcal{M}(G\times H)$ such that 
\begin{align*}
\mathcal{S}_{+}(G,H):=\lbrace (D,\phi) \in \mathcal{M}(G\times H) \mid  P_1(D)\in \mathcal{G}, \ \ (D,\phi)\in \mathcal{S}(P_1(D), H)  \rbrace.
\end{align*}

\end{defi}
Now, we will prove that if  $(\mathcal{G},\mathcal{S})$  also satisfies Axiom $(iv) $,  then  $(\mathcal{G},\mathcal{S}_{+})$  satisfying Axioms $(i)$, $(ii)$ and $(iii)$,  so that we obtain a category  \-$  \mathcal{D_+ }:=\mathcal {C}^A \mathcal{(G, S_+)}$.
Let  $G,H, K \in \mathcal{G}$:

\begin{enumerate}[(i)]
\item   Clearly,  $(\mathcal{G, S_{+}})$ satisfies Axiom $(i)$.
\item  Let $(D,\phi)\in \mathcal{S}_{+}(G,H) $ and  $(g,h)\in G\times H$. We have $p_1(D)\in \mathcal{G} $  and  $(D,\phi)\in \mathcal{S}(p_1(D),H)$,  by Axiom $(iii)$ of  $ \mathcal{(G,S)}$,  $$^{(1,h)} (D,\phi) \in \mathcal{S}(p_1(D),H).$$
By remark \ref{conj} one has:
\begin{itemize}
\item $^gp_1(D)=p_1(^{(g,h)} D)\in \mathcal{G}$ .
\item $^{(g,1)}(\Delta(p_1(D)), 1)\in \mathcal{S}(^gp_1(D), p_1(D))$.
\end{itemize}
Thus 
\begin{align*}
^{(g,1)}(\Delta(p_1(D)), 1)\ast {^{(1,h)} (D,\phi)}\in \mathcal{S} (^g p_1(D),H)
\end{align*}
note that
\begin{align*}
^{(g,1)}\Delta(p_1(D) \ast {^{(1,h)}D}={^{(g,h)} D}.
\end{align*}
Let  $(l, s)\in  D $, then
\begin{align*}
({^{(g,1)} 1 \ast {^{(1,h)} \phi }})(^g l, ^h s)= {^{(g,1)} 1}((^gl,l))^{(1,h)} \phi ((l,^hs))= \phi(l,s).
\end{align*}
Thus  $ ^{(g,h)} (D,\phi)\in \mathcal{S} (^g p_1(D),H)$ this means that
$ ^{(g,h)} (D,\phi)\in \mathcal{S}_{+} (G,H)$.
\item Let $(D, \phi)\in \mathcal{S}_+(G,H)$ and  $(T, \psi)\in \mathcal{S}_+(H,K)$ 
such that $\phi_2=\psi_1$ in  $k_2(D)\cap k_1(T)$,  by definition of,  $\mathcal{S}_+$ one has:
\begin{itemize}
\item $p_1(T)$ and  $p_1(D)$  are elements of $\mathcal{G}$.
\item $(D,\phi)\in \mathcal{S}(p_1(D), H) $ and  $(T,\psi)\in \mathcal{S}(p_1(T), K) $.
\end{itemize}
Since  $(\mathcal{G}, \mathcal{S})$ satisfies Axiom $(iv)$, one has:
\begin{itemize}
\item  $p_1(D\ast T)=D\ast p_1(T)\in \mathcal{G}$
\item  $(D,\phi)\ast (\Delta(p_1(T)), 1) \in \mathcal{S}(p_1(D\ast T), p_1(T) )$
\end{itemize}
thus
\begin{align*}
\left( (D,\phi)\ast (\Delta(p_1(T)), 1)\right) \ast (T,\psi)=(D\ast T, \phi \ast \psi)\in \mathcal{S}(p_1(D\ast T), K).
\end{align*}
\end{enumerate}
Then  $(\mathcal{G}, \mathcal{S}_+)$ satisfies   Axioms $(i)$, $(ii)$ and $(iii)$.

\begin{rem}
Note that each $(D,\phi)\in \mathcal{S}_+(G, H)$,  can be written as  $(\Delta(p_1 (D)),1) \ast (D,\phi)$, 
with  $(\Delta(p_1(D),1)\in \mathcal{S_+}(G,  p_1(D))$ and $ (D,\phi)\in \mathcal{S}_+(p_1(D), H)$ Thus  

\begin{align*}
\left[ \frac{G\times H}{(D, \phi)}\right]=\left[ \frac{G\times p_1(D)}{(\Delta (p_1(D)), 1)} \right] \otimes_{Ap_1(D)} \left[ \frac{p_1(D)\times H}{(D,\phi)} \right] 
\end{align*}
in the category $\mathcal{D}_+$.
\end{rem}

\begin{prop} \label{D_+}

The data $(\mathcal{G},\mathcal{S})$ satisfying  Axioms $(i)$ to  $(iv)$.
\begin{enumerate}[(a)]
\item For  $G, H \in \mathcal{G}$ and   $(D,\phi)\in \mathcal{M}(G,H)$, we have   $(D,\phi) \in \mathcal{S}_+ (G,H)$ if only if, for all  $K\in \sum _{\mathcal{G}}(H)$,  the following conditions hold:   $D\ast K \in \mathcal{G}$ and  $ (D,\phi)\ast (\Delta(K),1) \in \mathcal{S}(D\ast K, K)$.
\item $(\mathcal{G}, \mathcal{S}_+)$ satisfies  the  axioms $(i)-(iv)$ in \ref{axioms}.
\item We  have   $\mathcal{D}\subseteq \mathcal{D}_+$, and equality holds if only if   $(\mathcal{G}, \mathcal{S})$ satisfies   axioms $(v)$ in  \ref{axioms}. In particular, by part $(b)$, we have $\mathcal{(D_+)_+ =D_+}$.
\item  Let $G, H \in \mathcal{G}$ and  $(D,\phi)\in \mathcal{M} (G, H)$ with  $p_1(D)=G$. Then $(D,\phi) \in \mathcal{S}(G,H)$ if only if $(D,\phi) \in \mathcal{S}_+(G,H)$. In particular, $\mathcal{D}_+$ contains the elementary operations  restriction, inflation, or deflation
if and only if the category  $\mathcal{D}$  does.
\end{enumerate}

\end{prop}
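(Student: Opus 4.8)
The four parts all reduce to two ingredients: the defining condition of $\mathcal{S}_+$, namely $(D,\phi)\in\mathcal{S}_+(G,H)$ iff $p_1(D)\in\mathcal{G}$ and $(D,\phi)\in\mathcal{S}(p_1(D),H)$, together with Axiom $(iv)$ for $(\mathcal{G},\mathcal{S})$. The computational heart is the behaviour of the star product against the diagonal pairs $(\Delta(K),1)$. The plan is to record once and for all that, for $D\leq G\times H$ and $K\leq H$, one has $D\ast\Delta(K)=D\cap(G\times K)$, so that $D\ast K=p_1(D\cap(G\times K))$ and $\phi\ast 1$ is just the restriction of $\phi$; specialising to $K=H$ gives the two identities $D\ast H=p_1(D)$ and $(D,\phi)\ast(\Delta(H),1)=(D,\phi)$, which I will use repeatedly.

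For part $(a)$, the forward implication is exactly Axiom $(iv)$ applied to $(D,\phi)\in\mathcal{S}(p_1(D),H)$ and any $K\in\Sigma_{\mathcal{G}}(H)$. For the converse I would specialise the hypothesis to $K=H$ (which lies in $\Sigma_{\mathcal{G}}(H)$ since $H\in\mathcal{G}$): the conditions $D\ast H\in\mathcal{G}$ and $(D,\phi)\ast(\Delta(H),1)\in\mathcal{S}(D\ast H,H)$ become, via the two identities above, $p_1(D)\in\mathcal{G}$ and $(D,\phi)\in\mathcal{S}(p_1(D),H)$, i.e. $(D,\phi)\in\mathcal{S}_+(G,H)$. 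For part $(b)$, Axioms $(i)$--$(iii)$ for $(\mathcal{G},\mathcal{S}_+)$ are the verification carried out immediately before the proposition, so only Axiom $(iv)$ remains; given $(D,\phi)\in\mathcal{S}_+(G,H)$ and $K\in\Sigma_{\mathcal{G}}(H)$, part $(a)$ already yields $D\ast K\in\mathcal{G}$ and $(D,\phi)\ast(\Delta(K),1)\in\mathcal{S}(D\ast K,K)$, and since the first projection of this composite is exactly $D\ast K$, the defining condition of $\mathcal{S}_+$ places it in $\mathcal{S}_+(D\ast K,K)$.

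For part $(c)$, the inclusion $\mathcal{D}\subseteq\mathcal{D}_+$ comes from $\mathcal{S}(G,H)\subseteq\mathcal{S}_+(G,H)$, which is again the $K=H$ instance of Axiom $(iv)$. To handle the equality I would first reduce the categorical statement to a set-theoretic one: since the transitive fibered bisets $[\frac{G\times H}{(U,\phi)}]$ form an $R$-basis of $B^A_R(G,H)$ indexed by $(G\times H)$-conjugacy classes, and both $\mathcal{S}$ and $\mathcal{S}_+$ are conjugation-closed by Axiom $(ii)$, the equality $\mathcal{D}=\mathcal{D}_+$ is equivalent to $\mathcal{S}=\mathcal{S}_+$, hence, given $\mathcal{S}\subseteq\mathcal{S}_+$, to $\mathcal{S}_+\subseteq\mathcal{S}$. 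If Axiom $(v)$ holds, then for $(D,\phi)\in\mathcal{S}_+(G,H)$ one has $p_1(D)\in\Sigma_{\mathcal{G}}(G)$, so $(\Delta(p_1(D)),1)\in\mathcal{S}(G,p_1(D))$; composing it with $(D,\phi)\in\mathcal{S}(p_1(D),H)$ via Axiom $(iii)$ (the overlap $k_2(\Delta(p_1(D)))\cap k_1(D)=\{1\}$, so the matching condition is vacuous) and using $\Delta(p_1(D))\ast D=D$ gives $(D,\phi)\in\mathcal{S}(G,H)$. Conversely, for $G\in\mathcal{G}$ and $H\in\Sigma_{\mathcal{G}}(G)$ the pair $(\Delta(H),1)$ lies in $\mathcal{S}_+(G,H)$ (its first projection is $H\in\mathcal{G}$ and it lies in $\mathcal{S}(H,H)$ by Axiom $(i)$), so $\mathcal{S}_+=\mathcal{S}$ forces $(\Delta(H),1)\in\mathcal{S}(G,H)$, which is Axiom $(v)$. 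The same observation shows $(\mathcal{G},\mathcal{S}_+)$ always satisfies Axiom $(v)$, so applying the equality criterion to $(\mathcal{G},\mathcal{S}_+)$, legitimate by part $(b)$, yields $(\mathcal{D}_+)_+=\mathcal{D}_+$.

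Part $(d)$ is immediate from the definition: when $p_1(D)=G$ the condition $p_1(D)\in\mathcal{G}$ is automatic and $\mathcal{S}(p_1(D),H)=\mathcal{S}(G,H)$, so membership in $\mathcal{S}$ and in $\mathcal{S}_+$ coincide. The statement about elementary operations then follows by inspecting first projections: the subgroups defining $Res$, $Inf$, $Def$ and $iso$ all surject onto their first factor, whereas $Ind$ does not, which is precisely why inductions are adjoined in passing to $\mathcal{D}_+$. I expect the only genuinely non-formal step to be the reduction of $\mathcal{D}=\mathcal{D}_+$ to $\mathcal{S}=\mathcal{S}_+$, which rests on the $R$-linear independence of transitive fibered bisets; the remaining difficulty is purely bookkeeping, namely keeping the star products and the matching hypotheses of Axiom $(iii)$ straight, together with the convention that $D\ast K$ denotes the first projection $p_1(D\cap(G\times K))$.
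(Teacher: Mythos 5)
Your proof is correct and follows essentially the same route as the paper: part $(a)$ via Axiom $(iv)$ and the $K=H$ specialisation using $D\ast H=p_1(D)$ and $(D,\phi)\ast(\Delta(H),1)=(D,\phi)$, part $(b)$ by reusing $(a)$, part $(c)$ by composing with the diagonal pairs $(\Delta(p_1(D)),1)$, and part $(d)$ directly from the definition. The only differences are that you make explicit two points the paper leaves implicit — the reduction of $\mathcal{D}=\mathcal{D}_+$ to $\mathcal{S}=\mathcal{S}_+$ via linear independence of the transitive fibered bisets, and the verification that $(\mathcal{G},\mathcal{S}_+)$ satisfies Axiom $(v)$ so that $(\mathcal{D}_+)_+=\mathcal{D}_+$ follows — both of which are welcome refinements rather than a different argument.
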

\begin{proof}

\begin{enumerate}[(a)]

\item  Let $G,H \in \mathcal{G}$ and $(D,\phi) \in \mathcal{M}(G\times H)$. First, assume  that $(D,\phi)\in \mathcal{S}_+ (G,H)$ and $K \in  \sum_{\mathcal{G}} (H)$. Then $p_1(D)\in \mathcal{G}$ and $(D,\phi) \in \mathcal{S}(p_1(D), H)$, applying   the Axiom  $(iv)$ of  $\mathcal{(G, S)}$ to $(D,\phi) \in \mathcal{S}(G,H)$ and $K$, we obtain $D\ast K \in \mathcal{G}$ and $(D\ast \Delta(K), \phi \ast 1 )\in \mathcal{S}(D\ast K, K)$. \\
For the converse. apply the condition  to $K=H$ and note that $D\ast H =p_1(D)$ and $(D,\phi) \ast (\Delta(H), 1)= (D,\phi) \in \mathcal{S} (p_1(D), H) $. Thus, we have $(D,\phi)\in \mathcal{S}_+ (G,H)$.
\item We know that $(\mathcal{G}, \mathcal{S}_+)$ satisfies   Axioms   $(i)$, $(ii)$ and  $(iii)$. Now we will prove the axiom $(iv)$. Let $G$, $H \in \mathcal{G}$ and  $(D, \phi)\in \mathcal{S}_+(G,H)$. Then we have   $p_1(D)\in \mathcal{G}$ and  $(D, \phi)\in \mathcal{S}(p_1(D),H)$. Now, for all  $K\in \sum_{\mathcal{G}}(H)$ and by axiom $(iv)$ of    $(\mathcal{G}, \mathcal{S})$, we have:
\begin{itemize}
\item $D\ast K =p_1(D\ast \Delta(K))\in \mathcal{G}$.
\item $(D\ast \Delta(K), \phi\ast 1 )\in \mathcal{S}(p_1(D\ast \Delta(K)), K)$.
\end{itemize}
Thus $(D\ast \Delta(K), \phi\ast 1 )\in \mathcal{S}_+(D\ast K, K)$.
\item Let $G, H \in \mathcal{G} $ and   $(D,\phi)\in \mathcal{S}(G, H)$, by axiom $(iv)$ of $(\mathcal{G, S})$,    $D\ast H=p_1(D)$ is an element of $ \mathcal{G}$ and $(D,\phi) \in \mathcal{S}(p_1(D), H)$. Thus  $\mathcal{S}(G, H)\subseteq \mathcal{S}_+(G, H)$, it follows that  
$\mathcal{D}\subseteq \mathcal{D}_+ $.\\
Additionally, if $\mathcal{D}= \mathcal{D}_+$, meaning that $\mathcal{S}(G,H)=\mathcal{S}_+(G,H)$ for any groups $G$ and $H$ in $\mathcal{G}$, and since $\mathcal{(G,S _+)}$ satisfies axiom $(v)$, then so does $\mathcal{(G,S)}$.

On the other hand, if $(\mathcal{G},\mathcal{S})$ satisfies axiom $(v)$, then for every $(D,\phi)\in \mathcal{S}_+(G,H)$, meaning that $p_1(D)\in \mathcal{G}$ and $(D,\phi)\in \mathcal{S}(p_1(D), H)$, and by Axiom $(v)$, we have $(\Delta(p_1(D)),1)\in \mathcal{S}(G,p_1(D))$. Therefore,
\begin{align*}
(\Delta(p_1(D)), 1)\ast (D,\phi) =(D,\phi)\in \mathcal{S}(G,H).
\end{align*}
Hence, $\mathcal{S}(G,H)+\subseteq \mathcal{S}(G,H)$, implying that $\mathcal{D}_+\subseteq \mathcal{D}$.
\item This follows immediately from the definition of $\mathcal{S}_+$.
 
\end{enumerate}
\end{proof}

\begin{defi}\label{S^+}
The data  $(\mathcal{G}, \mathcal{S})$ satisfies   Axioms   $(i)-(iii)$. For  $G, H\in \mathcal{G}$, we define  $\mathcal{S}^+(G,H)$  by 
\begin{align*}
\lbrace (D,\phi)\in \mathcal{M}(G \times H) \mid p_1(D) , p_2(D) \in \mathcal{G} \text{ and } (D,\phi)\in \mathcal{S}\left( p_1(D),p_2(D)\right)   \rbrace.
\end{align*}
\end{defi}
Note that  if   $(\mathcal{G},\mathcal{S})$ satisfies   Axioms $(vii) $ and $(viii)$,  then  $(\mathcal{G},\mathcal{S}^{+})$  satisfies  Axioms $(i)$, $(ii)$ and $(iii)$ (the same arguments as for  $\mathcal{S}_+$),  As  a result, we obtain a category  $  \mathcal{D^+ := C}^A\mathcal{(G, S^+)}$.

\begin{prop} \label{D^+}
The data   $(\mathcal{G}, \mathcal{S})$ satisfying Axioms  $(i)$-$(iii)$, additional  Axioms  $(vii)$ and $(viii)$, and   $\mathcal{G}$ is closed under interceptions.
\begin{enumerate}[(a)]
\item For  $G$, $H \in \mathcal{G}$, $(D,\phi)\in \mathcal{M}(G\times H)$ the following statements are equivalent:
\begin{enumerate}[(i)]
\item $(D,\phi) \in \mathcal{S}^+(G,H)$.
\item  $p_2(D)\in \mathcal{G}$ and  for all $L\in \sum_\mathcal{G} (p_2(D))$ one has  $D\ast L \in \mathcal{G}$ and  $(D,\phi)\ast (\Delta (L),1) \in \mathcal{S}(D\ast L,L)$ .
\item $p_1(D)\in \mathcal{G}$  and for all $K\in \sum_\mathcal{G} (p_1(D))$ one has $K\ast D \in \mathcal{G}$ and $(\Delta (K),1)\ast(D,\phi)  \in \mathcal{S}(K, K\ast D)$ .
\end{enumerate}
In particular,  $\mathcal{S} (G,H) \subseteq \mathcal{S}^+(G,H)$  for all  $G, H \in \mathcal{G}$.
\item $(\mathcal{G}, \mathcal{S}^+)$ satisfies Axioms  $(i)-(iii)$,  $(v)$, $(vi) $, $(vii)$ and   $(viii)$.
\item  One has  $\mathcal{D} \subseteq \mathcal{D}^+$,  with equality if only if  $(\mathcal{G}, \mathcal{S})$ satisfies Axioms $(v)$ and $(vi)$.  In particular, by  $(b)$, one has  $\mathcal{(D^+)^+=D^+}$.
\item  Let  $G, H \in \mathcal{G}$, and  $(D,\phi) \in \mathcal{M}(G\times H)$ such that  $p_1(D)=G $  and  $p_2(D)=H$, then $(D,\phi)\in \mathcal{S}(G,H)$ if only if $(D,\phi)\in \mathcal{S}^+(G,H)$. In particular, $\mathcal{D}^+$  contains a given  inflation or  deflation  if only if  $\mathcal{D}$ does.

\end{enumerate}
\begin{proof}
\begin{enumerate}[(a)]
\item 
\begin{itemize}
\item  $(i) \Rightarrow (ii)$.
Let $(D,\phi)\in \mathcal{S}^+(G,H)$, then $p_1(D), p_2(D)$ be elements of $ \mathcal{G}$ and  $(D,\phi)\in \mathcal{S}(p_1(D),p_2(D))$.   By Axiom  $(vii)$  of  $(\mathcal{G}, \mathcal{S)}$,  for any  $L\in \sum_\mathcal{G} (p_2(D))$, one has  $ D\ast L\in \mathcal{G}$ and  $(D,\phi) \ast (\Delta(L),1) \in \mathcal{S}(D\ast L, L) $. Thus $(ii)$ holds.
\item 
$(ii)\Rightarrow (i)$. Assume  $(D,\phi)$  satisfies the condition in $(ii)$.  Then  $p_2(D) \in \mathcal{G}$, $D\ast p_2(D)=p_1(D)\in \mathcal{G}$, moreover  $$(D,\phi) \ast (\Delta(p_2(D)), 1)=(D,\phi) \in \mathcal{S}(p_1(D), p_2(D)).$$ Thus  $ (D,\phi) \in \mathcal{S}^+(G,H)$.
\item Since Axioms $(vii)$ and $(viii)$ are symmetric, the equivalence of $(ii)$ and $(iii)$
is proved in the same way.
\end{itemize}
\item We proved  that $\mathcal{(G, S^+)}$ satisfies the axioms $(i)$-$(iii)$.
\begin{itemize}
 \item Axiom $(v)$. Let $G\in \mathcal{G}$  and let  $H\in \sum_{\mathcal{G}} G$. First note that   $(\Delta (H), 1) \in S(H,H)$ and  $p_i(\Delta(H))=H \in \mathcal{G}$  for   $i=1,2$, this means  that  $(\Delta (H),1) \in \mathcal{S}(p_1(H), p_2(H))$, moreover $(\Delta(H),1)\in \mathcal{M}(G\times H)$. Thus  $(\Delta(H),1)\in \mathcal{S}^+(G,H)$. 
 \item With the same argument, we have the Axiom $(vi)$
\item Axiom $(viii)$.  Let  $G , H \in \mathcal{G}$  and let  $(D,\phi)\in \mathcal{S}^+(G,H)$,   then  $p_i(D)\in \mathcal{G}$  for  $i=1,2$, and  $(D,\phi)\in \mathcal{S}(p_1(D),p_2(D))$. For any   $K\in \sum_{\mathcal{G}} p_1(D)$,  by Axiom $(viii)$ of  $\mathcal{(G,S)}$, one has  $K\ast D \in \mathcal{G}$, and  
 \begin{align*}
 (\Delta(K),1)\ast (D,\phi)\in \mathcal{S}(K, K\ast D).
 \end{align*}
Note that  $p_2(\Delta (K)\ast D)= K\ast D$ and  $p_1(\Delta(K)\ast D)=K$, Thus 
  \begin{align*}
 (D,\phi)\ast (\Delta(K),1)\in \mathcal{S}^+( K, K\ast D).
 \end{align*}
 \item Since Axioms $(vii)$  and $(viii)$ are symmetric, the Axioms $(vii)$ is proved en the same way. 
\end{itemize}
\item  If $\mathcal{D}=\mathcal{D}^+$. Then $(\mathcal{G,S})=(\mathcal{G,S^+})$,  by  $(b)$ one has  $(\mathcal{G, S})$  satisfies  Axioms $(v)$ and  $(vi)$. \\
On the other hand, if  $\mathcal{(G,S)}$ satisfies  Axioms  $(v)$ and  $(vi)$. Then,  for all  $G$, $H \in \mathcal{G}$  and for all  $(D,\phi)\in \mathcal{S}(G,H)$. By axioms $(vii)$ and $(vi)$, $p_2(D)\in \mathcal{G}$ and $(\Delta(p_2(D),1) \in \mathcal{S}(H,p_2(D)) $. Then 
\begin{align*}
(D,\phi) \ast (\Delta(p_2(D)), 1)=(D,\phi) \in\mathcal{S}(G,p_2(D))
\end{align*}
 Now, by Axioms $(viii)$ and  $(v)$ of $\mathcal{(G, S)}$, one has    $p_1(D)\in \mathcal{G}$ and $(\Delta(p_1(D),1)\in \mathcal{S}(p_1(D),G)$, hence  
 \begin{align*}
 (\Delta(p_1(D)),1)\ast (D,\phi)=(D,\phi) \in \mathcal{S}(p_1(D), p_2(D)).
 \end{align*}
Thus $(D,\phi)\in \mathcal{S}^+(G,H)$, it follows that  $\mathcal{D \subseteq D^+}$.\\
  Let  $(D,\phi)\in \mathcal{S}^+(G,H)$, then, $p_1(D), p_2(D) \in \mathcal{G}$ and $ (D, \phi) $ is an element of $ \mathcal{S}(p_1(D),p_2(D))$. Now, by axiom $(v)$ of   $\mathcal{(G,S)}$,  we have  $(\Delta(p_1(D)),1)\in \mathcal{S}(G,p_1(D))$ and by Axiom $(vi)$ of $(\mathcal{G,S})$,  we have $ \Delta(p_2(D)),1)\in \mathcal{S}(p_2(D),H)$, then 
\begin{align*}
(\Delta(p_1(D)),1)\ast (D,\phi)\ast (\Delta(p_2(D)),1)=(D,\phi) \in \mathcal{S}(G,H)
\end{align*}
it follows  $\mathcal{D^+ \subseteq D}$.
\item  It follows from the definition of $\mathcal{S}^+$.
 \end{enumerate}
\end{proof}
\end{prop}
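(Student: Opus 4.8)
The plan is to treat Proposition~\ref{D^+} as the two-sided analogue of Proposition~\ref{D_+}. Whereas $\mathcal{S}_+$ only projects a pair onto its first component and is controlled by the single Axiom~$(iv)$, the construction $\mathcal{S}^+$ projects onto \emph{both} components and is controlled by the symmetric pair of Axioms~$(vii)$ and $(viii)$, together with the standing hypothesis that $\mathcal{G}$ is closed under intersections. I would prove part~$(a)$ first and then read off $(b)$--$(d)$ from it, so that the genuinely new content is confined to the bookkeeping forced by projecting on both sides.

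For part~$(a)$ I would prove $(i)\Rightarrow(ii)\Rightarrow(i)$ and then obtain $(ii)\Leftrightarrow(iii)$ by symmetry. If $(D,\phi)\in\mathcal{S}^+(G,H)$, then $p_1(D),p_2(D)\in\mathcal{G}$ and $(D,\phi)\in\mathcal{S}(p_1(D),p_2(D))$, so applying Axiom~$(vii)$ of $(\mathcal{G},\mathcal{S})$ to this pair and to an arbitrary $L\in\Sigma_{\mathcal{G}}(p_2(D))$ gives $D\ast L\in\mathcal{G}$ and $(D,\phi)\ast(\Delta(L),1)\in\mathcal{S}(D\ast L,L)$, which is $(ii)$. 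For the converse I would specialize $(ii)$ to $L=p_2(D)$, using $D\ast p_2(D)=p_1(D)$ and $(D,\phi)\ast(\Delta(p_2(D)),1)=(D,\phi)$ to recover $(i)$. Since Axioms~$(vii)$ and $(viii)$ are exchanged by swapping the two factors, $(ii)\Leftrightarrow(iii)$ is the mirror image of this argument. The inclusion $\mathcal{S}(G,H)\subseteq\mathcal{S}^+(G,H)$ is then immediate, because Axiom~$(vii)$ shows that every $(D,\phi)\in\mathcal{S}(G,H)$ already satisfies condition~$(ii)$.

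Granting part~$(a)$, parts~$(c)$ and $(d)$ are short. The inclusion $\mathcal{D}\subseteq\mathcal{D}^+$ follows from $\mathcal{S}\subseteq\mathcal{S}^+$; for the equality I would argue as in Proposition~\ref{D_+}$(c)$. If $(\mathcal{G},\mathcal{S})$ satisfies $(v)$ and $(vi)$, then for $(D,\phi)\in\mathcal{S}^+(G,H)$ the diagonal pairs $(\Delta(p_1(D)),1)$ and $(\Delta(p_2(D)),1)$ lie in $\mathcal{S}$, and the composite $(\Delta(p_1(D)),1)\ast(D,\phi)\ast(\Delta(p_2(D)),1)=(D,\phi)$ lands back in $\mathcal{S}(G,H)$; conversely $\mathcal{D}=\mathcal{D}^+$ forces $(v),(vi)$ through part~$(b)$. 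The idempotence $(\mathcal{D}^+)^+=\mathcal{D}^+$ is then immediate, since $(\mathcal{G},\mathcal{S}^+)$ satisfies $(v)$ and $(vi)$ by $(b)$. Part~$(d)$ is purely definitional: when $p_1(D)=G$ and $p_2(D)=H$ the defining condition of $\mathcal{S}^+(G,H)$ collapses to $(D,\phi)\in\mathcal{S}(G,H)$, and inflations and deflations have full projections, so the assertion about them follows.

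The substance, and the one place where the intersection hypothesis is indispensable, is checking in part~$(b)$ that $(\mathcal{G},\mathcal{S}^+)$ is closed under composition (Axiom~$(iii)$); the remaining axioms reduce to projection bookkeeping, with $(v),(vi)$ coming from Axiom~$(i)$ applied to diagonals and $(vii),(viii)$ transporting directly since $p_1(\Delta(K)\ast D)=K$ and $p_2(\Delta(K)\ast D)=K\ast D$. For $(iii)$, given $(D,\phi)\in\mathcal{S}^+(G,H)$ and $(T,\psi)\in\mathcal{S}^+(H,K)$ with matching restrictions, the difficulty is that $p_1(D\ast T)$ and $p_2(D\ast T)$ need not be $p_1(D)$ and $p_2(T)$. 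My plan is to set $M:=p_2(D)\cap p_1(T)$, which lies in $\mathcal{G}$ precisely because $\mathcal{G}$ is closed under intersections, to restrict $(D,\phi)$ on the right to $M$ by Axiom~$(vii)$ and $(T,\psi)$ on the left to $M$ by Axiom~$(viii)$, and then to compose the two restricted pairs by Axiom~$(iii)$ of $(\mathcal{G},\mathcal{S})$. Since $D\ast M=p_1(D\ast T)$ and $M\ast T=p_2(D\ast T)$, this exhibits $(D\ast T,\phi\ast\psi)$ as an element of $\mathcal{S}(p_1(D\ast T),p_2(D\ast T))$, that is, of $\mathcal{S}^+(G,K)$. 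I expect this composition step---choosing the intersection $M$ and verifying that the two one-sided restrictions recombine to the correctly doubly-projected pair with the matching condition intact---to be the main obstacle; everything else is a faithful transcription of the $\mathcal{S}_+$ arguments carried out on both sides at once.
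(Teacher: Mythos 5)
Your proposal is correct and, for parts (a), (c) and (d), follows the same route as the paper: the equivalence in (a) via Axiom (vii) and the specialization $L=p_2(D)$, the two inclusions in (c) via the diagonal pairs $(\Delta(p_1(D)),1)$ and $(\Delta(p_2(D)),1)$, and (d) by unwinding the definition. The one place you go beyond the paper is the verification of Axiom (iii) for $(\mathcal{G},\mathcal{S}^+)$ in part (b): the paper merely asserts this holds by ``the same arguments as for $\mathcal{S}_+$,'' but that argument composes $(D,\phi)\in\mathcal{S}(p_1(D),H)$ with a right restriction to $p_1(T)\leq H$ and does not literally transfer, since for $\mathcal{S}^+$ one only knows $(D,\phi)\in\mathcal{S}(p_1(D),p_2(D))$ and $p_1(T)$ need not be contained in $p_2(D)$. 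Your device of restricting both factors to $M:=p_2(D)\cap p_1(T)$ by Axioms (vii) and (viii) and then composing via Axiom (iii) of $(\mathcal{G},\mathcal{S})$ is exactly what is needed, and it is the only point where the standing hypothesis that $\mathcal{G}$ is closed under intersections actually enters --- a hypothesis the paper states but never visibly invokes. The remaining checks you defer (that the matching condition descends to $k_2(D)\cap k_1(T)\cap M$, and that $D\ast M=p_1(D\ast T)$ and $M\ast T=p_2(D\ast T)$, so the restricted pairs recombine to $(D\ast T,\phi\ast\psi)$) do go through, so your outline is complete.
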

\section{The functor  lower plus for $A$-fibered bisets} \label{sect functor lower }
Throughout this section, let $R$ denote a commutative ring and let the data $(G, S)$ satisfy Axioms $(i)–(iv)$. Define $\mathcal{D := C(G, S)}$ as in Definition \ref{S}, and define $\mathcal{S_+}$ and $\mathcal{D_+}$ as in Definition \ref{defi S_+}. The goal of this section is to construct a functor $-_+ : \mathcal{F}_{\mathcal{D},R}^{A} \longrightarrow \mathcal{F}^A_{D_+,R}$ that generalizes the construction in [\cite{constructios+}, Section 4].
Let  $X$ be an  $A$-fibered $G$-set,  we denote  by  $G_x$ the stabilizer  of the  $A$-orbit of  $x\in X$.
\begin{defi}
Let $G \in \mathcal{G}$, let $X $ be an $A$-fibered $G$-set such that $G_x \in \mathcal {G}$ for all
$x \in X$, and let  $F\in \mathcal{F}^A_{\mathcal{D},R}$.  A section of $F$ over $X$ is a function
\begin{align*}
s: X\longrightarrow \bigoplus_{x \in [X/A]} F(G_x)
\end{align*}
such that  $s(x)\in F(G_x)$  for all  $x\in X$ and $ [X/A]$ is  a  representative set of  the $A$-orbits  of $X$.
\end{defi}
These sections form a $R$-module via point-wise
constructions. The group $G\times A $ acts $R$-linearly on the set of these sections by  the action, $$(g,a)\cdot s (x)={^g s(g^{-1} ax)}=F(Iso(C_g))(s(g^{-1} ax)),$$ where $C_g: G_{{g^{-1}}x}\longrightarrow G_x$
is the conjugation isomorphism mapping. A section $s$ of $G$
over $F$ is called $(G,A)$-equivariant,  if $(g,a)\cdot s = s$ for all $(g,a) \in G\times A$.
\begin{defi} \label{GammaG}
Let $G\in \mathcal{G}$ and $F\in \mathcal{F}^A_{\mathcal{D},R}$. We will denote by $\Gamma_{F}(G)$ the category whose objects are pairs $(X,s)$ where $X$ is a $G$-set $A$-fibered such that $G_x\in \mathcal{G}$ for every $x\in X$, and $s$ is a $(G,A)$-invariant section of $F$ over $X$. Given $(X,s)$ and $(Y,t)$ objects in $\Gamma_{F}(G)$, an arrow $\alpha: (X,s) \longrightarrow (Y,t)$ is a morphism of $G$-sets $A$-fibered, $\alpha: X\longrightarrow Y$, satisfying $G_x=G_{\alpha(x)}$ and $t(\alpha(x))=s(x)$ for every $x\in X$. The composition in $\Gamma_F(G)$ is the composition of morphisms of $G$-sets $A$-fibered, and identity arrows are the identity morphisms.
\end{defi}
\subsection{ The functor $\Gamma_{F}(U): \Gamma_{F}(G) \longrightarrow \Gamma_{F}(H)$.}
 Let $G, H \in \mathcal{G}$, $F\in \mathcal{F}^A_{\mathcal{D}, R}$, and let  $U$ be an $A$-fibered $(G,H)$-biset  such that $((G\times H)_u, \phi_u)\in \mathcal{S}_+(G,H)$ for all  $u\in U$. 

\begin{defi} \label{GammaU}
Let  $U$ be an $A$-fibered  $(G,H)$-biset as above, we define a functor  
\begin{align*}
 \Gamma_{F}(U): \Gamma_{F}(H) &\longrightarrow \Gamma_{F}(G)\\
 (X,s)&\longrightarrow (U\otimes_{AH} X, U(s)),
 \end{align*}
 where
\begin{align*}
U(s)(u \otimes_{AH} x)=F\left( \left[ \frac{G_{u\otimes_{AH} x} \times H_x}{((G\times H_x)_u, \phi_{u,x})} \right]  \right) (s(x)),
\end{align*}
where  $((G\times H_x)_u, \phi_{u.x}):=((G\times H)_u), \phi_u) \ast (\Delta(H_x), 1)$.
For a morphism  \-$\alpha : (X,s)\longrightarrow (Y,t)$ in  $\Gamma_{F}(H)$
we set  $\Gamma _{F}(U)(\alpha)=U\otimes_{AH} \alpha$. 
\end{defi}
For the rest of
this subsection, we show that these definitions are well-defined and yield a functor.
\begin{proof}
First, we prove that $F$ can be applied to this class of the $A$-fibered biset. 
By definition of  \ref{GammaG}, one has $H_x\in \sum_{\mathcal{G}}(H)$ for all  $x\in X$,  and by hypothesis over  $U$ one has $((G\times H)_u, \phi_u)\in \mathcal{S}_+(G, H)$, then $ p_1((G\times H)_u)\in \mathcal{G}$ and  by axiom $(iv)$ of $\mathcal{(G,S)}$:
 \begin{itemize}
 \item $(G\times H)_u \ast H_x= G_{u\otimes x}\in \mathcal{G}$.
 \item $((G\times H)_u, \phi_u)\ast (\Delta(H_x),1)=((G\times H_x)_u, \phi_{u,x})\in \mathcal{S}(G_{u\otimes x}, H_x)$.
 \end{itemize}
Therefore, $F$ can be applied to the class of this  $A$-fibered biset.\\
 Next, we will proved that the definition $U(s)(u\otimes_{AH} x)$ remains unchanged when replacing $u\otimes_{AH} x$ with $ua^{-1} h^{-1} \otimes_{AH} hax$ for some $h \in H$ and $a \in A$.
\begin{align*}
G_{ua^{-1} h^{-1} \otimes_{AH} hax}&= G_{u\otimes_{AH} x},\\
 H_{hax}= {^hH_{ax}}=&{^hH_x}=H_{hx},\\
 (G\times H)_{ua^{-1}h^{-1}}&={^h (G\times H)_{ua^{-1}}} ={^h(G\times H)_u}.
\end{align*}
Then 
\begin{align*}
U(s)(u^{-1}h^{-1} \otimes hax)&=F\left( \left[ \frac{G_{ua^{-1}h^{-1}\otimes_{AH} hax} \times H_{hax}}{((G\times H_{hax})_{u(ha)^{-1}}, \phi_{ua^{-1}h^{-1},hax})} \right]  \right) (s(hax))
\end{align*}
Now, note that 
\begin{align*}
(G\times H_{hax})_{u(ha)^{-1}} &=(G\times H)_{ua^{-1}h^{-1}}\ast \Delta (H_{hax})\\
&=(G\times H)_{uh^{-1}}\ast \Delta (H_hx)\\
&={^{(1,h)}(G\times H)_u} \ast {^{(h,h)}\Delta (H_x)} 
\end{align*}
this is equal to  $(G\times H)_u \ast \Delta (H_x)\ast \lbrace (h^{-1} h^\prime h, h^\prime )\mid h^\prime \in H_{hx} \rbrace$. Moreover,  if  $(g^\prime, h^\prime)\in (G\times H_{hax})_{u(ha)^{-1}}$, one has 
\begin{align*}
\phi_{ua^{-1}h^{-1},hax}(g^\prime, h^\prime)&= \phi_{ua^{-1}h^{-1}} (g^\prime, h^\prime)\cdot 1((h^\prime, h^\prime))\\
&= \phi_{u(ha)^{-1}}(g^\prime, h^\prime). 
\end{align*}
On the other hand 
\begin{align*}
\phi_{u.x} \ast 1 ((g^\prime. h^\prime))&=\phi_{u,x} ((g^\prime, h^{-1} h^\prime h))\cdot 1(( h^{-1} h^\prime h, h^\prime))\\
&=\phi_{u,x}( (g^\prime, h^{-1} h^\prime h))\\
&=\phi_u((g^\prime, h^{-1} h^\prime h)).
\end{align*}
Now let's note that since $(g^\prime, h^{-1}h^\prime h) \in (G \times H)_u$, if $a^\prime = \phi_u(g^\prime, h^{-1}h^\prime h)$, we have that $(g^\prime, h^{-1}h^\prime h) \cdot u = a^\prime_u$, then $$a^\prime uh^{-1} = g^\prime uh^{-1} (h^\prime)^{-1}h h^{-1} = g^\prime (uh^{-1}) (h^\prime)^{-1},$$ and $a^\prime = \phi_{uh^{-1}} (g^\prime, h^\prime)$.  
  Then  $\phi_{ua^{-1}h^{-1},hax} = \phi_{u.x} \ast 1$. In consequence:
\begin{align*}
U(s)(u^{-1}h^{-1} \otimes _{AH} hax)&= F\left( \left[ \frac{G_{u\otimes_{AH} x} \times H_x}{((G\times H_x)_u, \phi_{u,x})} \right]  \otimes_{AH_x}             \left[ \frac{H_x\times H_{hx}}{(\Delta_{c_h}(H_{hx}), 1)}\right]   \right) (s(hax))\\
&= F\left( \left[ \frac{G_{u\otimes_{AH} x} \times H_x}{((G\times H_x)_u, \phi_{u,x})} \right] \right)
 (^hs(ax))\\
&= F\left( \left[ \frac{G_{u\otimes_{AH} x} \times H_x}{((G\times H_x)_u, \phi_{u,x})} \right] \right)  s(x)\\
&=U(s)(u\otimes_{AH} x).
\end{align*}
Thus $U(s)$ is a section of $U\otimes X$ over $F$.\\
Next,  we show that the section $U(s)$ is $(G,A)$-equivariant. For any $g \in G$ and $a\in A$, we have
\begin{align*}
(g,a)\cdot U(s)(u\otimes x)&={^gU(s)(g^{-1}au\otimes x )}\\
&=F\left( \left[ \frac{G_{u\otimes x }\times G_{g^{-1}au\otimes x}}{(^{(g,1)}\Delta(G_{g^{-1} u \otimes x}),1)} \right] \otimes_{A G_{g^{-1}au\otimes x}} \left[ \frac{G_{g^{-1}au\otimes x} \times H_x}{((G\times H)_{g^{-1}au}, \phi_{g^{-1}a u, x} )} \right]  \right) (s(x))\\
&=F\left(  \left[ \frac{G_{u\otimes x} \times H_x}{((G\times H_x)_{u}, \phi_{u, x} )} \right]  \right) (s(x)),
\end{align*} 
since $^{(g,1)}\Delta (G_{g^{-1} u \otimes x })\ast (G\times H_x)_{g^{-1} u}= (G\times H_x)_{u} $, and for any
$(g^{\prime},h^{\prime})$ in $(G\times H_x)_u$,  we have 
\begin{align*}
1\ast\phi_{g^{-1}a u, x} (g^\prime, h^\prime))& = 1((g^\prime, g^{-1}g^\prime g))\cdot \phi_{g^{-1}a u}((g^{-1}g^\prime g, h^\prime))\\
&=\phi_{g^{-1}a u}((g^{-1}g^\prime g, h^\prime)).
\end{align*}
Given $(g^{\prime},h^{\prime})\in (G\times H_x)u$, we have that
\begin{align*}
1\ast\phi{g^{-1}a u, x} ((g^\prime, h^\prime))
&= 1((g^{\prime}, g^{-1} g^\prime g) ) \cdot \phi_{g^{-1}a u, x}((g^{-1}g^\prime g, h^\prime))\\
&=\phi_{g^{-1}a u}((g^{-1}g^\prime g, h^\prime)),
\end{align*}
where
\begin{align*}
(g^{-1}g^\prime g, h^\prime)\cdot g^{-1} u = g^{-1}g^\prime g (g^{-1} u) (h^\prime)^{-1} = g^{-1}a^{\prime} u =a^{\prime} g^{-1}u
\end{align*}
and $a^{\prime}=\phi_{u}((g^{\prime},h^{\prime}))$. Therefore, $1\ast\phi_{g^{-1}a u, x} =\phi_{u,x}$. Thus, 
\begin{align*}
 (g,a)\cdot U(s)(u\otimes x)&={^gU}(s)(g^{-1}au\otimes x )\\
&=F\left(  \left[ \frac{G_{u\otimes x} \times H_x}{((G\times H_x)_{u}, \phi_{u, x} )} \right]  \right) (s(x)).
\end{align*} 
Now, we will proved that $\Gamma_{F}(U)=U\otimes{{AH}} \alpha$ is a morphism from $(U\otimes{AH} X, U(s))$ to $(U\otimes_{AH} Y, U(t))$ in $\Gamma_{F}(G)$. 
Since $H_x=H_{\alpha(x)}$, one has  
\begin{align*}
G_{u\otimes \alpha(x)}=(G\times H)_u \ast H_{\alpha(x)}=  (G\times H)_u \ast H_x=G_{u\otimes x},
\end{align*}
for any $u\in U$ and $x\in X$. Thus,
\begin{align*}
U(t)(u\otimes \alpha (x))&=F \left( \left[ \frac{G_{u\otimes \alpha(x)} \times H_{\alpha(x)}}{(G\times H_{\alpha(x)})_u, \phi_{u,\alpha(x)} } \right]  \right) (t(\alpha (x)))\\
&=F\left(  \left[ \frac{G_{u\otimes x} \times H_x} {(G\times H_x)_u, \phi_{u,x}}\right] \right) (s(x)) \\
&=U(s)(u\otimes x),
\end{align*}
then,  $U \otimes_{AH} \alpha$ is a morphism in $\Gamma_F (G)$.\\
Additionally, $\Gamma_{F}(U)$ preserves composition and identity since $U\otimes {-}$ does. Therefore, the functor $\Gamma{F}(U)$ is well-defined.
\end{proof}
\begin{prop} \label{Gammacomposicion}
Let  $G$, $H$ and  $K$ be finite groups in $\mathcal{G}$, let  $U$ be an $A$-fibered  $(G, H)$-biset such that  $((G\times H)_u, \phi_u)\in \mathcal{S}_+(G.H)$ for all $u\in U$ and let  $V$  be  an $A$-fibered $(H, K)$-biset  such that  $((H\times K)_v, \phi_v)\in \mathcal{S}_+(H, K)$ for all  $v\in V$. Moreover, the two functors  $\Gamma_{F}(U\otimes_{AH} V)$ and  $\Gamma_F(U) \circ \Gamma_{F}(V)$ are naturally isomorphic.
\end{prop}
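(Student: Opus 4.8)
The plan is to establish the natural isomorphism $\Gamma_F(U \otimes_{AH} V) \cong \Gamma_F(U) \circ \Gamma_F(V)$ by exhibiting, for each object $(X,s) \in \Gamma_F(K)$, a natural isomorphism between the two objects of $\Gamma_F(G)$ obtained by the two composite constructions, and then checking naturality in $(X,s)$. First I would identify the underlying fibered $G$-sets. By the associativity of the tensor product of fibered bisets stated in Section~\ref{sect Fibered biset}, the canonical map
\begin{align*}
\eta_{X} : (U \otimes_{AH} V) \otimes_{AK} X \longrightarrow U \otimes_{AH} (V \otimes_{AK} X), \qquad (u \otimes v) \otimes x \longmapsto u \otimes (v \otimes x),
\end{align*}
is a well-defined isomorphism of fibered $G$-sets. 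This $\eta_X$ is the natural candidate for the component of the natural isomorphism, so the task reduces to showing that $\eta_X$ is an arrow in $\Gamma_F(G)$, that is, that it respects both the stabilizers and the section data.

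Next I would verify that $\eta_X$ preserves stabilizers and matches the two sections. For the stabilizers, the key input is Lemma~\ref{estProd}: the stabilizing pair of a tensor element factors as the $\ast$-product of the stabilizing pairs of its factors, so that
\begin{align*}
((G \times K)_{(u\otimes v)\otimes x}, \phi_{(u\otimes v)\otimes x}) = \big((G\times H)_u,\phi_u\big) \ast \big((H\times K)_v,\phi_v\big) \ast (\Delta(K_x),1),
\end{align*}
and likewise on the other side; associativity of the $\ast$-operation (which underlies the associativity of tensor products via Lemma~\ref{estProd} applied to $U \otimes V$) then forces the two stabilizer groups to coincide under $\eta_X$. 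For the section values, I would compute $(U\otimes_{AH}V)(s)$ on the left and $U(V(s))$ on the right and show they agree after applying $\eta_X$. Both are values of $F$ applied to classes of transitive fibered bisets built from the stabilizing pairs above; the equality then follows from the functoriality of $F$ on $\mathcal{D}$ together with the factorization
\begin{align*}
\left[\frac{G_{(u\otimes v)\otimes x}\times K_x}{\big((G\times K_x)_{u\otimes v},\phi_{u\otimes v,x}\big)}\right] = \left[\frac{G_{u\otimes(v\otimes x)}\times H_{v\otimes x}}{\big((G\times H_{v\otimes x})_u,\phi_{u,v\otimes x}\big)}\right] \otimes_{AH_{v\otimes x}} \left[\frac{H_{v\otimes x}\times K_x}{\big((H\times K_x)_v,\phi_{v,x}\big)}\right],
\end{align*}
which is again an instance of associativity of $\ast$ translated through Theorem~\ref{mackey} / the decomposition of transitive bisets. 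Along the way I must check that all the relevant pairs genuinely lie in the appropriate $\mathcal{S}$-sets so that $F$ is defined on them, which follows from Axiom~$(iv)$ and the hypotheses on $U$ and $V$ exactly as in the proof that $\Gamma_F(U)$ is well-defined.

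Finally I would confirm naturality: for a morphism $\alpha : (X,s) \to (X',s')$ in $\Gamma_F(K)$, the square relating $\eta_X, \eta_{X'}$ and the two images of $\alpha$ commutes, which is immediate since $\eta$ is the canonical associativity isomorphism and both functors send $\alpha$ to the appropriate tensored map of the underlying fibered bisets, where associativity is natural in all three variables.

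I expect the main obstacle to be the bookkeeping in the second step: carefully tracking the homomorphisms $\phi$ through the iterated $\ast$-products and the conjugation isomorphisms, and verifying that the element $\phi_{(u\otimes v)\otimes x, \ldots}$ produced on the left literally equals the one assembled on the right so that the two applications of $F$ coincide. The conceptual content is entirely carried by Lemma~\ref{estProd} and the associativity of the tensor product; the difficulty is purely in matching the fiber data precisely, analogous to (but one $\ast$-factor longer than) the well-definedness computation already carried out for $\Gamma_F(U)$.
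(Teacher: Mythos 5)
Your plan is correct and follows essentially the same route as the paper: the canonical associativity isomorphism $(u\otimes v)\otimes x\mapsto u\otimes(v\otimes x)$ as the component of the natural isomorphism, Lemma~\ref{estProd} to identify the stabilizing pairs, and the Mackey formula (Theorem~\ref{mackey}) to fuse the two transitive bisets so that the two sections agree, with Axiom~$(iv)$ guaranteeing that $F$ may be applied throughout. The bookkeeping you flag as the main obstacle is exactly the content of the paper's computation verifying $(G\times H_{v\otimes x})_{u}\ast(H\times K_x)_{v}=(G\times K_x)_{u\otimes v}$ and $\phi_{u,v\otimes x}\ast\phi_{v,x}=\phi_{u\otimes v,x}$.
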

\begin{proof}
Let  $u \otimes v \in  U \otimes_{AH} V$.  By lemma \ref{estProd},  
\begin{align*} 
((G\times K)_{u\otimes v}, \phi_{u\otimes v})=((G\times H)_u,\phi_u) \ast ((H\times K)_v, \phi_v)
\end{align*}
Thus  $((G\times K)_{u\otimes v}, \phi_{u\otimes v}) \in \mathcal{S}_+ (G,K)$.
Let  $(X,s)$ be an object of  $\Gamma_{F} (K)$.  Now, we prove that  $( (U\otimes _{AH} V) \otimes_{AK} X, (U\otimes V)(s))$ and $(U\otimes_{AH}(V\otimes _{AK} X), U(V(s)))$ are isomorphic in  $\Gamma_{F} (G)$.
We set 
\begin{align*}
\alpha: (U\otimes_{AH} V)\otimes_{AK} X &\longrightarrow U\otimes_{AH} (V\otimes_{AK} X )\\
(u \otimes v)\otimes x &\longmapsto u\otimes (v\otimes x)
\end{align*}
is an isomorphism in  $\Gamma_{F} (G)$, since $\alpha$ is an isomorphism of $A$-fibered bisets and  $G_{(u\otimes v)\otimes x}= G_{u\otimes (v\otimes x)}$. Then, one has 
\begin{align*}
(U\otimes_{AH} V)(s)((u\otimes v)\otimes x)=F \left( \left[ \frac{G_{(u\otimes v)\otimes x \times K_x}}{(G\times K_x)_{(u\otimes v) } , \phi_{(u\otimes v), x}}\right]  \right) (s(x)).
\end{align*}
On the other hand
\begin{align*}
U(V(s))(u\otimes (v\otimes x))&=F\left(\left[\frac{G_{u\otimes (v\otimes x)} \times H_{v\otimes   x}}{(G\times H_{v\otimes x} )_u , \phi_{u,v\otimes x} } \right]   \right) (V(s)(x))\\
&= F\left(\left[\frac{G_{u\otimes (v\otimes x)} \times H_{v\otimes   x}}{(G\times H_{v\otimes x} )_u , \phi_{u,v\otimes x} } \right] \otimes_{AH_{v\otimes x}} \left[ \frac{H_{v\otimes x} \times K_x}{(H\times K_x)_v, \phi_{v,x}} \right]    \right) (s(x)).
\end{align*}
Moreover, the following equalities hold:
\begin{itemize}
\item $(H\times K_x)_v=(H\times K)_v \ast \Delta(K_x)$,
\item $H_{v\otimes x}=(H\times K)_v \ast K_x,$
\end{itemize}
hence, $p_1((H\times K_x)_v)= H_{v\otimes x}$. By the Mackey formula, we have:
\begin{align*}
U(V(s))(u\otimes (v\otimes x))&= F\left(\left[\frac{G_{u\otimes (v\otimes x)} \times K_{x}}{((G\times H_{v\otimes x} )_u , \phi_{u,v\otimes x}) \ast ((H\times K_x)_v, \phi_{v,x}) } \right] \right) (s(x)).
\end{align*}
Recall:
\begin{itemize}
\item $(G\times K_x)_{u\otimes v} = (G\times K)_{u\otimes v} \ast \Delta (K_x)= (G\times H)_u \ast (H\times K)_v \ast \Delta(K_x)$.
\item $(G\times H_{v\otimes x})_{u}= (G\times H){u} \ast \Delta (H_{v\otimes x})$.
\item
\begin{align*}
(G\times H_{v\otimes x})_{u} \ast  (H\times K_x)_v & =(G\times H)_{u} \ast \Delta (H_{v\otimes x}) \ast (H\times K_x)_v\\& = (G\times H)_{u} \ast \Delta (p_1((H\times K_x)_v  ) )\ast (H\times K_x)_v \\
&=(G\times H)_u\ast (H\times K_x)_v\\ &=(G\times H)_{u} \ast (H\times K)_v \ast \Delta(K_x)\\ &= (G\times K_x)_{u\otimes v} .
\end{align*}
\begin{align*}
\phi_{u\otimes v,x} = \phi_{u\otimes v} \ast 1 =\phi_{u\otimes v} = \phi_u \ast \phi_v,
\end{align*} 
on the other hand,
\begin{align*}
\phi_{u, v\ast x} \ast \phi_{v,x}= (\phi_u \ast 1) \ast (\phi_v \ast 1)= \phi_u \ast \phi_v.
\end{align*} 
\end{itemize}
Therefore, $U(V(s))= (U\otimes_{AH} V)(s)$.
\end{proof}

\begin{defi}
For   $G\in \mathcal{G}$ and  $F\in \mathcal{F}_{\mathcal{D},R}^{A}$, we define the following two operations in the category  $\Gamma_{F}(G)$.
\begin{enumerate}[(a)]
\item  Let  $(X, s)$ and  $(Y, t)$ be elements of  $\Gamma_{F}(G)$, their co-product  $(X, s)\sqcup (Y,t)$  is defined as  $(X \sqcup Y, s\sqcup t)$, where $X\sqcup Y$ is the disjoint union of $X$ and  $Y$ and  $s\sqcup t$ is the section on  $X\sqcup Y$  which is defined on  $X$ as  $s$ and  on $Y$ as $t$. This construction  together with the obvious inclusions from $X$ and  $Y$  to $X\sqcup Y$ is also a categorical coproduct   $\Gamma_{F}(G)$.
\item  For $(X, s)$ and  $(X, t)$  elements of $\Gamma_F(G)$, with the same $A$-fibered $G$-set $X$,  we have an object  $(X, s + t)$, where $s + t$  is the pointwise  sum of  $s$ and  $t$.
\end{enumerate}
\end{defi}

\begin{defi}
Let $G\in \mathcal{G}$ and  $F\in \mathcal{F}^A_{\mathcal{D},  R}$. We define the abelian group $F_+(G)$  as the free abelian group on the set  of isomorphism classes   $\lbrace X, s\rbrace$ of objects $(X, s)$ of  $\Gamma_{F}(G)$ module the subgroup generated by all  elements of the form
\begin{equation}\label{cociente}
\lbrace X\sqcup Y, s \sqcup t\rbrace-\lbrace X, s\rbrace-\lbrace Y,t\rbrace \ \text{ and  }  \ \lbrace X, s+r\rbrace-\lbrace X,s\rbrace-\lbrace X,r\rbrace
\end{equation}
where $(X, s)$, $(X, r)$, $(Y, t)$  are objects  of  $\Gamma_{F}(G)$. We will denote  the  coset  of $\lbrace X, s\rbrace$ in  $F_+(G)$ by $[X, s]_G$.
\end{defi}
Let's note that, due to the first relation, the classes of elements $(\frac{G\times A}{H,\phi} ,s)$, where $H \in\sum_\mathcal{G}(G)$, $\phi: H\longrightarrow A$, and $s$ is a $(G,A)$-invariant section of $F$ over the $G$-set $A$-fibered $\frac{G\times A}{H,\phi}$, form a set of generators for the abelian group $F_+(G)$.

For any $x\in F(H)$, there exists a unique $(G,A)$-invariant section $s_x$ of $F$ over $\frac{G\times A}{H,\phi}$ with $s_x((H,\phi)) = x$. The class $\lbrace \frac{G\times A}{H,\phi}, s_x\rbrace$ will be denoted by $[H,\phi, x]G \in F_+(G)$.
\begin{thm}
Let $R$ be a commutative ring and  $(\mathcal{G}, \mathcal{S})$ satisfying Axioms  $(i)$ to  $(iv)$,  set  $\mathcal{D}:=\mathcal{C} (\mathcal{G}, \mathcal{S} )$ and  $\mathcal{D}_+:=\mathcal{C} (\mathcal{G}, \mathcal{S}_+ )$ and  let  $F\in \mathcal{F}^A_{\mathcal{D}, R}$ be a biset functor  on  $\mathcal{D}$ over  $R$.
\begin{enumerate}[(a)]
\item Mapping a finite group $G$  to $R$-module $F_+(G)$ and an element  $[U] \in B^A(G,H)$, where  $U$ is a  $A$-fibered $(G,H)$-biset with  stabilizers pairs  in  $\mathcal{S}_+ (G,H)$ to  $R$-lineal map  $F_+([U]):F_+(H)\longrightarrow F_+(G)$ induced by the functor  $\Gamma_F([U]):\Gamma_F(H)\longrightarrow \Gamma_F(G)$, yields a biset functor $F_+\in \mathcal{F}_{\mathcal{D}_+,R}^A$.
\item The association  $F \longrightarrow F_+$  define a  $R$-lineal functor $$\textbf{-}_+: \mathcal{F}_{\mathcal{D}, R}^A\longrightarrow \mathcal{F}_{\mathcal{D}_+, R}^A.$$

\end{enumerate}
\end{thm}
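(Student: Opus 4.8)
The plan is to realize $F_+$ as an $R$-linear functor from $\mathcal{D}_+$ to the category of $R$-modules. The assignment on objects, $G\mapsto F_+(G)$, is already fixed, so the entire content is to define $F_+$ on morphisms and then to check $R$-linearity, preservation of identities, and preservation of composition. Since $Hom_{\mathcal{D}_+}(H,G)$ is spanned over $R$ by the classes $\left[\frac{G\times H}{U,\phi}\right]$ with $(U,\phi)\in\mathcal{S}_+(G,H)$, it suffices to define $F_+$ on such a transitive biset, prove enough additivity that the definition descends to $B^A_R(G,H)$, and then verify functoriality. For an $A$-fibered $(G,H)$-biset $U$ whose stabilizing pairs all lie in $\mathcal{S}_+(G,H)$, the functor $\Gamma_F(U)\colon\Gamma_F(H)\to\Gamma_F(G)$ of Definition \ref{GammaU} is available; for a transitive biset $\frac{G\times H}{U,\phi}$ this hypothesis holds because every stabilizing pair is a $(G\times H)$-conjugate of $(U,\phi)$, and $\mathcal{S}_+$ is closed under conjugation by Proposition \ref{D_+}(b). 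As a functor preserves isomorphisms, $\Gamma_F(U)$ induces a well-defined map on isomorphism classes, hence a map $F_+(U)\colon F_+(H)\to F_+(G)$ between the underlying free abelian groups.

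The main bookkeeping obstacle is well-definedness: I must check that $F_+(U)$ respects the two families of relations in (\ref{cociente}) and that it depends only on the class $[U]\in B^A(G,H)$. Additivity in the $G$-set variable follows from the canonical isomorphism $U\otimes_{AH}(X\sqcup Y)\cong (U\otimes_{AH}X)\sqcup(U\otimes_{AH}Y)$ together with the fact that $U(s\sqcup t)$ restricts to $U(s)$ and $U(t)$ on the two summands, while additivity in the section variable, $U(s+r)=U(s)+U(r)$, is immediate from the defining formula in Definition \ref{GammaU} because $F$ of a fixed biset is $R$-linear. The passage to the Grothendieck group is then the additivity $F_+(U\sqcup U')=F_+(U)+F_+(U')$, which holds since $(U\sqcup U')\otimes_{AH}X=(U\otimes_{AH}X)\sqcup(U'\otimes_{AH}X)$ with matching sections; thus $F_+$ factors through $B^A(G,H)$, and extending $R$-linearly and restricting to $Hom_{\mathcal{D}_+}(H,G)$ produces the desired $R$-linear map $F_+([U])$.

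For functoriality, the identity morphism $\left[\frac{G\times G}{\Delta(G),1}\right]$ is sent to the identity because $\Gamma_F(\frac{G\times G}{\Delta(G),1})$ is naturally isomorphic to the identity functor on $\Gamma_F(G)$, the section computation reducing to $F(\mathrm{id}_{G_x})=\mathrm{id}$. For composition, given bisets $U$ and $V$ with stabilizing pairs in $\mathcal{S}_+(G,H)$ and $\mathcal{S}_+(H,K)$, Lemma \ref{estProd} together with closure of $\mathcal{S}_+$ under $\ast$ (Proposition \ref{D_+}(b)) shows the stabilizing pairs of $U\otimes_{AH}V$ lie in $\mathcal{S}_+(G,K)$, so $\Gamma_F(U\otimes_{AH}V)$ is defined, and Proposition \ref{Gammacomposicion} supplies a natural isomorphism $\Gamma_F(U\otimes_{AH}V)\cong\Gamma_F(U)\circ\Gamma_F(V)$. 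The conceptual heart of the argument, and the step I expect to require the most care, is that this natural isomorphism descends to an \emph{equality} of induced maps $F_+([U\otimes_{AH}V])=F_+([U])\circ F_+([V])$: naturally isomorphic functors send each object $(X,s)$ to isomorphic objects of $\Gamma_F(G)$, which therefore represent one and the same class in $F_+(G)$. Since composition in $\mathcal{D}_+$ is computed by the Mackey formula (Theorem \ref{mackey}) as a sum of such tensor products, $R$-bilinearity then gives $F_+(\beta\circ\alpha)=F_+(\beta)\circ F_+(\alpha)$ on all of $\mathcal{D}_+$, completing part (a).

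For part (b), given a natural transformation $\theta\colon F\to F'$ in $\mathcal{F}^A_{\mathcal{D},R}$, I would define $(\theta_+)_G\colon F_+(G)\to F'_+(G)$ by $[X,s]_G\mapsto[X,\theta\circ s]_G$, where $(\theta\circ s)(x):=\theta_{G_x}(s(x))$. That $\theta\circ s$ is again a $(G,A)$-invariant section uses the naturality of $\theta$ at the conjugation isomorphisms $Iso(C_g)$, while $R$-linearity and compatibility with the relations (\ref{cociente}) are immediate. Naturality of $\theta_+$ with respect to a morphism $[U]$ reduces to the identity $\theta\circ U(s)=U(\theta\circ s)$ (the right-hand transport computed with $F'$), which is exactly the naturality square of $\theta$ applied to the morphism $\left[\frac{G_{u\otimes x}\times H_x}{(G\times H_x)_u,\phi_{u,x}}\right]\in Hom_{\mathcal{D}}(H_x,G_{u\otimes x})$ appearing in Definition \ref{GammaU}, which lies in $\mathcal{D}$ since its stabilizing pair belongs to $\mathcal{S}(G_{u\otimes x},H_x)$. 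Finally $(\mathrm{id}_F)_+=\mathrm{id}_{F_+}$, $(\theta'\circ\theta)_+=\theta'_+\circ\theta_+$, and the additivity and $R$-homogeneity of $\theta\mapsto\theta_+$ all follow at once from the pointwise definition, so $-_+$ is an $R$-linear functor.
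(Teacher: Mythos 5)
Your proposal is correct and follows essentially the same route as the paper: both define $F_+([U])$ via the functor $\Gamma_F(U)$, verify well-definedness through isomorphism invariance and additivity with respect to disjoint union and pointwise sum of sections, invoke Proposition \ref{Gammacomposicion} for compatibility with composition, and define $-_+$ on natural transformations by pointwise post-composition $s\mapsto\theta\circ s$. The only difference is that you spell out a few steps the paper leaves implicit (the second relation in (\ref{cociente}), preservation of identities, and the naturality square in part (b)), all of which are handled correctly.
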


\begin{proof}
\begin{enumerate}[(a)]

\item It is straightforward to show that if $U$ and $V$ are isomorphic  $A$- fibered $(G, H)$-bisets
with pairs stabilizers in $S_+(G, H)$, then the functors $\Gamma_F (U)$ and $\Gamma_F (V )$ are naturally isomorphic. \\
Let $\rho: U \longrightarrow V$ be an isomorphism of $(G,H)$-biset. According to Definition \ref{GammaU}, we only need to note that for every $H$-set $A$-fibered $X$:
\begin{itemize}
    \item The morphism $\rho \otimes_{AH} 1: U\otimes_{AH} X \longrightarrow V\otimes_{AH} X$ is an isomorphism of $G$-sets. Therefore, $G_{u\otimes_{AH}x }= G_{\rho(u)\otimes_{AH}x }$.
\item For any element $u\otimes_A x\in U\otimes_A X$, we have:
\begin{align*}
U(s)(u\otimes_{AH} x)&= F\left( \left[ \frac{G_{u\otimes_{AH} x} \times H_x}{((G\times H_x)_u, \phi_{u,x})} \right]  \right) (s(x))\\
&=F\left( \left[ \frac{G_{\rho(u)\otimes_{AH} x} \times H_x}{((G\times H_x)_{\rho(u)}, \phi_{\rho(u),x})} \right]  \right) (s(x))\\
&= V(s) (\rho(u)\otimes_{AH} x)
\end{align*}
The last equality follows from the fact that:
\begin{align*}
((G\times H_x)_u, \phi_{u.x})&=((G\times H)_u), \phi_u) \ast (\Delta(H_x), 1))\\
&=((G\times H)_{\rho (u)}, \phi_{\rho (u)} ) \ast (\Delta(H_x), 1)\\
&= ((G\times H_x)_{\rho (u)}, \phi_{\rho(u).x} ).
\end{align*}
\item  Given a morphism $\alpha : (X,s)\longrightarrow (Y,t)$ in $\Gamma_{F}(H)$. By definition, we have $\Gamma _{F}(U)(\alpha) =U\otimes_{AH} \alpha$, then:

\end{itemize}
Therefore, $\Gamma_F (U)$ and $\Gamma_F (V)$ are naturally isomorphic. The functor $\Gamma_F (U)$ can be extended to a well-defined group homomorphism $f_{[u]}$, from the free abelian group associated to $\Gamma_F(H)$ to the free abelian group associated to $\Gamma_F(G)$. Additionally, note that $$f_{[U]} \left( (X\sqcup Y, s \sqcup t) -(X, s)- (Y,t)\right)$$ is equal to
\begin{align*}
  (U\otimes_{AH} (X\sqcup Y),U( s \sqcup t)) -(U\otimes_{AH} X, U(s))- (U\otimes_{AH} Y,U(t)),
\end{align*}
which is isomorphic to
$$ ((U\otimes_{AH} X)\sqcup ( U\otimes_{AH} Y), U(s)\sqcup U(t))-(U\otimes_{AH} X, U(s))- (U\otimes_{AH} Y,U(t)).$$ 
Thus, $f_{[U]}$ induces a homomorphism of $R$-modules 
 $$F_+([U]): F_+(H) \longrightarrow F_+(G),$$
$f_{[U\sqcup V]}= f_{[U]}+ f_{[V]}$. Therefore, we can define a group homomorphism $F_+: B^A(G,H)\longrightarrow Hom_{R}(F_+(H), F_+(G))$. Furthermore, by Proposition \ref{Gammacomposicion}, it follows that $F_+$ belongs to $\mathcal{F}_{\mathcal{D}+,R}^A$.
\item Let  $F, F^\prime \in \mathcal{F}^A_{\mathcal{D},  R}$ and let  $\varphi: F\longrightarrow F^\prime$ be a morphism  in $\mathcal{F}_{D,R}^A$, then for every $G \in \mathcal{G}$, one has one induced  functor  $\Gamma_F (G)\longrightarrow \Gamma_{F^\prime} (G)$, which maps an object  $(X,s)\in \Gamma_F(G)$ to  $(X, \varphi(s))$, where  $\varphi(s)(x):=\varphi_{G_x}(s(x))$. This functor induced a  $R$-lineal map from $F_+(G)$ to $F^\prime_+(G)$. Now, we prove that this construction
respects compositions. Let  $F^{\prime \prime} \in \mathcal{F}_{\mathcal{D},R}^{A}$
and  let  $ \tau: F^\prime \longrightarrow F^{\prime \prime}$  be a morphism in $\mathcal{F}_{D,R}^A$, then  
\begin{align*}
(\tau \circ \phi)(s)(x) &=  (\tau \circ \phi)_{G_x}(s(x))=\tau_{G_x} \left( \phi_{G_x} (s(x)) \right) \\
&=\tau_{G_x}(\phi(s)(x))=\tau(\phi(s))(x).
\end{align*}
It is straightforward to verify that this construction
 sends the identity morphism to the identity morphism, and is
$R$-linear.
\end{enumerate}
\end{proof}


Let  $G, \ H$  be finite  groups of $\mathcal{G}$ and let  $U$ be an $A$-fibered  $(G,H)$-biset such that all stabilizer pairs are in  $S_+(G,H)$. If  $K\in \sum_{\mathcal{G}} (H)$  and  $a\in F(K)$,  one has 
\begin{align*}
F_+([U])([K, \psi, a]_H)&= F_+([U]) \left( \left\lbrace \dfrac{H\times A}{(K,\psi)} , s_a\right\rbrace \right)\\ &= \left\lbrace U\otimes_{AH} \dfrac{H\times A}{(K,\psi)} , U(s_a)\right\rbrace 
\end{align*}
by  (\ref{cociente}), this is equal to 
\begin{align*}
\sum_{ (u\otimes hK) \in [G\backslash U\otimes_{AH} (H\times A/(K,\psi)) / A]} \left\lbrace \left[ \dfrac{G\times H_{hK}}{  G_{[u\otimes_{AH}hK]},  \phi_{u\otimes_{AH} hK} }\right] , U(s_a) \right\rbrace  
\end{align*}
where  $[G\backslash U\otimes_{AH} (H\times A/K,\psi) / A]$ denotes a representative set of  the $(G,A)$-orbits of  $U\otimes_{AH} H\times A/(K,\psi)$. Now, note that 
\begin{itemize}
\item $H_{hK}={^hK}$.
\item   By Lemma \ref{estProd}, $\phi_{[u\otimes_{AH} hK]}=\phi_u \ast {^h\psi}$, 
\item By definition  
\begin{align*}
U(s_a)(u\otimes_{AH} hK)&=F\left( \left[ \frac{G_{u\otimes_{AH} hK} \times ^hK}{((G\times ^hK)_U, \phi_{u,hK}}\right] \right) s_a(hK)\\
&=F\left( \left[ \frac{G_{u\otimes_{AH} hK} \times ^hK}{(G\times ^hK)_U, \phi_{u, hK} }\right] \right)  (^ha). 
\end{align*}
\end{itemize}
Thus, $F_+([U])([K, \psi, a]_H)$ is equal to 
\begin{align*}
\sum_{ (u\otimes hK) \in [G\backslash U\otimes_{AH} \frac{H\times A}{K, \psi} / A]} [G_{u\otimes hk}, \phi_u\ast {^h\psi}, F\left( \dfrac{G_{u\otimes hK }\times ^hK}{(G\times ^hK)_u, \phi_{u,hK} }\right) (^h a) ]_G.
\end{align*}
In particular, if $U=(G\times H/D,\phi) $, one has  $F_+\left(\left[ G\times H/D,\phi\right]  \right) ([K,\psi, a])$ is equal to 
\begin{align} \label{F_+[trans](---)}
\sum_{\substack{ h\in [p_2(D)\backslash H/K]\\ \phi_2 \vert_ {X_h}={^h\psi_2\vert_{X_h}} }}\left[  D\ast {^hK}, \phi \ast {^h \psi},F  \left( \left[ \dfrac{ D\ast {^hK} \times ^h K}{(D\ast \Delta (^hK), \phi)}\right]  \right) (^ha )  \right]_G 
\end{align}
where  $[p_2(D)\backslash H/K]$ denotes a set of representative of the double of $H$ respect to $p_2(D)$ and $K$, and  $X_h:= k_2(D) \cap ^hK$. This formula specializes in the case of elements $A$-fibered biset as follows:   
\begin{itemize}
\item Let  $G\in \mathcal{G}$,   $H\in \sum_\mathcal{G} (G)$,   $(K, \psi)\in  \mathcal{M}^\prime (H)$ and  $a\in F(K)$, then
\begin{align*}
F_+(Res_H^G)([K, \psi, a]_G)= \sum_{\substack{ g\in [H\backslash G /H]\\ ^g\psi \vert_{H\cap ^gK} =1 } } [H\cap ^g K,1\ast  ^g\psi, F(Res_{H\cap ^g K}^{^gK}) (^g a)]_H.
\end{align*}
\item Let  $G\in \mathcal{G}$, $H\in \sum_\mathcal{G} (G)$ with  $(\Delta(H),1)\in S_+(H,G)$, $K\in \sum_\mathcal{G}(H)$, $\psi: K\longrightarrow A$ be a group homomorphism  and let  $a\in F(K)$. Then
\begin{align*}
F_+(Ind^G_H)([K,\psi, a]_H)=[K, \psi, a]_G.
\end{align*}
\item  Let  $G\in \mathcal{G}$,and  $N\unlhd G$ such that  $G/N \in \mathcal{G}$ and   $^{(1,\pi)}\Delta(G):=\lbrace (g, gN)\mid g\in G\rbrace \in \mathcal{S}_+(G,G/N)$. For  $N\lhd K \leq G$  with  $(K/N, \psi) \in \mathcal{M}^\prime (G/N)$ and $a\in F_+(K/N)$, one has 
\begin{align*}
F_+(Inf_{G/N}^G)([K/N, \psi, a]_{G/N})=[K,\overline{\psi},F(Inf^K_{K/N})(a)]_G
\end{align*}
where $\overline{\psi} :K\longrightarrow A$ is the homomorphism  induced by  $\psi$.

\item  Let  $G\in \mathcal{G}$,  and $N\unlhd G$ such that  $G/N \in \mathcal{G}$  and $^{(\pi,1)}\Delta(G):=\lbrace (g, gN)\mid g\in G\rbrace\in \mathcal{S}_+(G,G/N)$. For  $K \in\sum_{\mathcal{G}}(G)$ and  $a\in F(K)$, one has 
\begin{align*}
F_+(Def^G_{G/N})([K,\psi, a]_G)&=\left[  KN/N,1 \ast \psi, F\left(\left[ \dfrac{KN/N \times K}{ ^{(\pi,1)} \Delta (K), 1} \right] \right)  (a) \right] _{G/N}.
\end{align*}
 
\end{itemize}
\subsection{Multiplicative Structure of $F_+$}

Let $ (\mathcal{G}, \mathcal{S}) $ satisfy axioms $(i)$ to $(iv)$. We can define the category $ \mathcal{D} := \mathcal{C}^A(\mathcal{G}, \mathcal{S}) $, and let $ F \in \mathcal{F}_{\mathcal{D},R}^{A} $ such that $ F(G) $ has a multiplicative structure for every $ G \in \mathcal{G} $. The group $ F_+(G) $ can be given a multiplicative structure defined by
 $$[X, s]_G \cdot [Y, t]_G := [X \otimes Y, s \otimes t]_G,$$ where
\begin{align*}
s \otimes t : X\otimes Y &\longrightarrow \coprod_{x\otimes y \in [ (X\otimes Y) /A] } F(G_{x\otimes y})\\
x\otimes y &\longmapsto F(Res^{G_x}_{G_{x\otimes y}}) (s(x)) \cdot F(Res^{G_y}_{G_{x\otimes y}}) (t(y)),
\end{align*} 
with $(X, s)$, $(Y, t) \in \Gamma_F(G)$.

Translating the definition to basic elements, we have that $ [H, \phi, x]G \cdot [K, \psi, y]G $ is equal to
\begin{align*}
\sum_{g \in [H\backslash G/K]} [H\cap {^gK}, \phi\cdot {^g\psi}, F(Res^H{H\cap ^gK})(x) \cdot F(Res^{^gK}_{H\cap ^gK})({^gy})]_G.
\end{align*}

Now, we will show that this definition makes sense:
\begin{itemize}
\item First, let's proved that $s \otimes t$ is well-defined. For $a \in A$, we have that $s \otimes t(xa^{-1} \otimes ay)$ is equal to
\begin{align*}
&F(Res^{G_{xa^{-1}}}_{G_{{xa^{-1}}\otimes hay}})(s(xa^{-1}))\cdot F(Res^{G_{ay}}_{G_{x{a^{-1}}\otimes ay}})(t(ay))\\  
&=F(Res^{G_{x}}_{G_{x\otimes y}})(s(x{a^{-1}}))\cdot F(Res^{G_{y}}_{G_{x\otimes y}})(t(ay))\\
&=F(Res^{G_{x}}_{G_{x\otimes y}})(s(x))\cdot F(Res^{G_{y}}_{G_{x\otimes y}})(t(y))\\
&=s\otimes t(x\otimes y).
\end{align*} 
\item Since $s$ and $t$ are $A$-invariant sections, this definition does not depend on the set of representatives $[(X\otimes Y)/A]$.
\item The product does not depend on the representatives of the class. Let $(X, s)$, $(X', s')$, $(Y, t)$, $(Y', t')$ be in $\Gamma_F(G)$ such that $\lbrace X, s\rbrace=\lbrace X^\prime, s^\prime \rbrace$ and $\lbrace Y, t \rbrace =\lbrace Y^\prime, t^\prime\rbrace$; then there exist morphisms $\alpha:(X,s) \longrightarrow (X', s')$ and $\beta:(Y,t) \longrightarrow (Y', t')$ that are isomorphisms in $\Gamma_F(G)$, i.e., $G_x=G_{\alpha(x)}$, $G_y=G_{\beta(y)}$, $s(x)=s'(\alpha(x))$, and $t(y)=t'(\beta(y))$ for all $x\in X$ and $y\in Y$.

We define
\begin{align*}
\rho: X\otimes Y&\longmapsto X^{\prime} \otimes Y^\prime\\
x\otimes y &\longrightarrow  \alpha(x)\otimes \beta(y),
\end{align*} 
noting that $\rho$ is bijective, since $\alpha$ and $\beta$ are. Now we will proved that $\rho$ is well-defined. Let $x\in X$, $y\in Y$, and $a\in A$; we have
\begin{align*}
\rho (xa^{-1}\otimes ay)&= \alpha (xa^{-1})\otimes \beta(ax)\\&=\alpha(x)a^{-1}\otimes a\beta(y)\\&=\alpha(x)\otimes \beta(y)\\&=\rho(x\otimes y).
\end{align*}
To proved that $\rho$ is a morphism of $G$-sets $A$-fibered, we have that for all $g\in G$ and $a\in A$:
\begin{align*}
\rho((g,a)\cdot (x\otimes y))&=\rho(gax\otimes gy)\\&=\alpha(gax)\otimes \beta(gy)\\&=ga\alpha(x)\otimes g \beta(y)\\&=(g,a)\cdot \rho(x\otimes y).
\end{align*} 
Let  $x\in X$, $y\in Y$, one has  
\begin{align*}
(s^\prime \otimes t^\prime) (\rho (x\otimes y))&= (s^\prime \otimes t^\prime)(\alpha (x)\otimes \beta(y))\\&=F(Res^{G_{\alpha(x)}}_{G_{\alpha(x)\otimes \beta(y)}} ) (s^\prime(\alpha(x)) \cdot F(Res^{G_{\beta(y)}}_{G_{\alpha(x)\otimes \beta(y)}} ) ( t^\prime(\beta(y) )\\&=F(Res^{G_{x}}_{G_{x\otimes y}} ) (s^\prime(\alpha(x)) \cdot F(Res^{G_{y}}_{G_{x\otimes y}} ) ( t^\prime(\beta(y) )\\
&=F(Res^{G_{x}}_{G_{x\otimes y}} ) (s(x)) \cdot F(Res^{G_{y}}_{G_{x\otimes y}} ) ( t(y) )\\
&=(s\otimes t) (x\otimes y).
\end{align*}
Therefore, $\rho$ is an isomorphism in $\Gamma_F(G)$, then $[X\otimes Y, s\otimes t]= [X^\prime \otimes Y^\prime, s^\prime \otimes t^\prime]$.
\item Now we will show that the product is zero over the quotient of $F_+(G)$.
Let $(X,s)$, $(Y,f)$, $(Y,t)$, $(Z,r)\in \Gamma_F(G)$.
\begin{itemize}
\item 
\begin{align*}
(X,s)\cdot (Y\sqcup Z, t\sqcup r)&=(X\otimes (Y\sqcup Z), s\otimes (t\sqcup r) )\\&= ((X\otimes Y)\sqcup (X\otimes Z), (s\otimes t) \sqcup  (s\otimes r)) \\
&=(X\otimes Y, s\otimes t)+ (X\otimes Z, s\otimes r).
\end{align*}
\item  
\begin{align*}
(X,s)\cdot (Y, t+f)&= (X\otimes Y, s\otimes (t+f))\\&=(X\otimes Y, (s\otimes t) + (s\otimes f))\\
&=(X\otimes Y, s\otimes t) + (X\otimes Y, s\otimes f) .
\end{align*}
\end{itemize}
\end{itemize}

Therefore, the product is well-defined.

\section{The functor  upper plus for $A$-fibered bisets } \label{sect functor upper }
Throughout this section, let $R$ denote a commutative ring and let the data $\mathcal{(G, S)}$
satisfying Axioms $(i)–(iii)$, $(vii)$ and $(viii)$.  Let $\mathcal{D := C(G, S)}$   be defined as
in Definition  \ref{S}  and let $\mathcal{S^+}$ and $\mathcal{D^+}$ be defined as
in Definition \ref{S^+}. The goal of this section is the construction of a functor $-^+ :\mathcal{F}^A_{D,R} \longrightarrow \mathcal{F}^A_{D^+,R}$ that generalizes the construction given  in [\cite{constructios+}, Section 5].\\

\begin{nota} 
If $G$ and $H$ are finite groups and   $U$ is  an $A$-fibered  $(G,H)$-biset, for   $u\in U$ and  $K\leq G$, we set 
\begin{align*}
K^u = \lbrace h \in H \mid \exists k \in K : (k, h) \in (G\times H)_u \rbrace.
\end{align*}
in the case $A = 1$, this notation has been introduced in \cite{unitp-grupos}.
It is easy to verify that 
\begin{align*}
& (K \times H)_u = \Delta(K) \ast (G\times H)_u  \ \ \ \  and   \ \ \ \  K^u = K \ast (G\times H)_u.&
\end{align*}
\end{nota}

\begin{rem} \label{condicionesDeSuma}
Let $G, \  H$  be  finite groups, $K$ be a subgroup of $G$, and let  $\alpha \in K^\ast$.  If   $U$ is an $A$-fibered  $(G,H)$-biset and  $g \in G $, one has: 
\begin{enumerate}[(a)]
\item If  $[A\backslash U/H]$ is a representative  set of $(A,H)$-orbits on  $U$. Then the set $\lbrace gu \mid u \in [A \backslash U /H] \rbrace$  is also a representative set  of  $(A,H)$-orbits of  $U$, for any $g\in G$. 
\item $K \leq p_1((G \times H)_u)$ if only if  $^gK \leq p_1((G\times H)_{gu})$.
 \item For  $u \in U$ and  $\lambda \in( K^u )^\ast$. The function $\phi_{u,2}$ is equal to  $\lambda $ in  $k_2((G\times H)_u) \cap K^u $ if only if the  function $\phi_{gu,2}$  is equal to $\lambda $ in $ k_2((G\times H)_{gu}) \cap (^gK)^{gu} $.
\item  For  $u \in U$ and  $\lambda \in( K^u )^\ast$. One has  $ \phi_u \ast \lambda = \alpha$ if only if  $\phi_{gu} \ast \lambda ={ ^g \alpha}$.

\end{enumerate}
\end{rem}

\begin{defi}\label{F^+}
For  $G\in \mathcal{G}$, we define  $F^+(G)$ as  the set of all elements  $(x_{(K, \lambda)})_{(K,\lambda ) \in \mathcal{M}^ \prime(G)} \in \oplus_{(K, \lambda )\in \mathcal{M}^\prime(G)} F(K) $  such that  $$^g x_{(K,\lambda)}:=F(Iso ^{^gK}_K) (x_{(K, \lambda)})=x_{(^gK, ^g\lambda)}$$  for  all  $(K,\lambda )\in \mathcal{M}^\prime (G)$. It  is easy to verify that  
\begin{align*}
F^+(G) \leq  \bigoplus_{(K, \lambda)\in \mathcal{M}^\prime(G)} F(K).
\end{align*}
For any   $G$,  $H \in  \mathcal{G}$, and any  $A$-fibered $(G,H)$-biset $U$ such that all pairs stabilizers   $((G\times H)_u, \phi_u) \in \mathcal{S}^+(G,H)$.  We define 
\begin{equation} \label{F^+ (U)}
\begin{split}
F^+([U])=F^+(U): F^+(H)&\longrightarrow F^+(G) \\
(x_{L,\lambda})_{(L, \lambda) \in \mathcal{M}^\prime (H)} &\longmapsto (y_{(K, \alpha)})_{(K, \alpha) \in \mathcal{M}^\prime (G)}
\end{split}
\end{equation}
where
\begin{align*}
y_{(K, \alpha)}= \sum_{\begin{subarray}{l} u \in [A\backslash U /H]\\
K \leq p_1(G\times H)_u) \\ \phi_{u,2} \vert _{W_{u,K}}= \lambda \vert _{W_{u,K}} \\ \phi_u \ast \lambda =\alpha \end{subarray}}  F\left( \left[ \dfrac{K \times K^u}{(K\times H)_u, \phi_u } \right] \right)  (x_{(K^u, \lambda)}),
\end{align*}
and the set $[A\backslash U /H]$ denotes a set of representatives of the   $(A,H)$-orbits  of $U$, and $W_{u, K} = k_2((G\times H)_u) \cap K^u)$. Now we show that the  above expression is well-definite: 

\begin{itemize}
\item For any  $K\in \mathcal{G}$, by Axiom $(viii)$  of  $(\mathcal{G, S}) $,   one has  $$\Delta(K)\ast (G\times H)_u=(K\times H)_u \in \mathcal{G}$$  and  $(\Delta(K),1) \ast ((G\times H)_u , \phi_u)= ((K\times H)_u, \phi_u) \in \mathcal{S} (K, K^u)$. Then  $F$ can be applied to the class   $\left[ \dfrac{K \times K^u}{(K\times H)_u, \phi_u } \right]$.
\item The expression in (\ref{F^+ (U)}) does not depend on the choice of representative   $[A\backslash U/H]$.  For any  $a\in A$, $h\in H$ and  $u\in U$, one has:
\begin{enumerate}
\item $K^{auh}=K \ast (G\times H)_{auh}=K\ast (G\times H)_{u} ^{ (1,h^{-1})}={^h (K^u)}$
\item \begin{align*}
((K\times H)_{uh}, \phi_{uh})&= (\Delta(K),1) \ast ((G\times H)_u, \phi_u)^{(1,h^{-1})}   \\
&= ((K\times H)_u , \phi_u)^{(1, h^{-1})}
\end{align*}
\end{enumerate}
Thus 
\begin{align*}
F\left( \left[  \dfrac{K\times K^{auh}}{(K\times H)_{auh}, \phi_{auh}} \right]\right)  (x_{K^{auh}})=F \left( \left[  \dfrac{K\times K^{u}}{(K\times H)_{u}, \phi_{u}} \right] \right) ( x_{K^{u}}).
\end{align*}
It follows that does not depend  on  the choice of  $(A,H)$-orbits of $U$.
\item Does not depend on the choice of representative  of $[U]$. 
Let  $V$ be an $A$-fibered   $(G,H)$- biset isomorphic to  $U$, then there exists  \- $\psi : U\longrightarrow V$ an isomorphism  in $_Gset_H^A$, thus: 
\begin{itemize}
\item $((G\times H)_u, \phi_u)=((G\times H)_{\psi(u)}, \phi_{\psi(u)})$.
\item $K^u= K^{\psi(u)}$. 
\item $((K\times H)_u, \phi_u)=((K\times H)_{\psi(u)}, \phi_{\psi(u)})$.
\item The set $\lbrace \psi(u) \in V \mid u \in [A\backslash U/ H]\rbrace$  is a set of representative  of   $(A,H)$-orbits of $V$.
\end{itemize} 
It follows that 
\begin{align*}
F^+(U)\left( (x_{( L,\alpha)})_{(L, \alpha) \in \mathcal{M}^\prime (H)} \right) = F^+(V)\left( (x_{(L,\alpha)})_{(L,\alpha) \in \mathcal{M}^\prime ( H)}\right) .
\end{align*}
\item Now, we show that  $(y_{(L,\alpha)})_{(L,\alpha)\in \mathcal{M}^\prime(G)} =F^+(U)( (x_{(K,\lambda)})_{(K,\lambda) \in \mathcal{M}^\prime (K)})$ is an element of $F^+(G)$. One has $^g y_{(L,\alpha)}$ is equal to 
\begin{align*}
\sum_{\begin{subarray}{l} u \in [A\backslash U /H]\\
K \leq p_1(G\times H)_u) \\ \phi_{u,2} \vert _{W_{u,K}}= \lambda \vert _{W_{u,K}} \\ \phi_u \ast \lambda =\alpha \end{subarray}}   F\left(\left[ \dfrac{^g K \times K}{^{(g,1)}\Delta (K), 1}\right] \ast \left[\dfrac{ K \times K^u }{(K\times H)_u, \phi_u} \right]  \right) (x_{(K^u, \lambda)}), 
\end{align*}
where $W_{u,K}= k_2((G\times H)_u) \cap K^u$.  Note that   $K^u=(^g K)^{gu}$,  $ (^{(g,1)}\Delta(K), 1)\ast ((K\times H)_u, \phi_u)=((^gK \times H)_{gu},  \phi_{gu})$,  by Remark \ref{condicionesDeSuma}, one has  $^g y_{(K,\alpha)}$  is equal to  

\begin{align*}
\sum_{\begin{subarray}{l} gu \in [A\backslash U /H]\\
^gK \leq p_1(G\times H)_{gu}) \\ \phi_{gu,2} \vert _{W_{gu, ^gK}}= \lambda \vert _{W_{gu,^gK}} \\ \phi_{gu} \ast \lambda ={^g\alpha} \end{subarray}}   F\left( \left[\dfrac{ ^g K \times (^gK)^{gu} }{(^gK\times H)_{gu}, \phi_{gu}} \right]  \right) (x_{(^gK)^{gu}, \lambda}), 
\end{align*}
this is equal to $y_{(^gK, ^g\alpha)}$.\\
Let's note that if $\mathcal{D}$ satisfies condition $k_2$, see section \ref{axioms}, the condition $ \phi_{u,2} \vert {W{u,K}} $ being equal to $ \lambda \vert {W{u,K}}$ in the definition of $y_{(K,\alpha)}$ is trivial, since $W_{u,K}=1_H$.
Clearly, $F^+([U])$ is also an $R$-module homomorphism. Since the sum in (\ref{F^+ (U)}) is additive in $[U]$, we also obtain an induced group
homomorphism
\begin{align*}
F^+ : B^A(G, H) \longrightarrow Hom_R(F^+(H), F^+(G)).
\end{align*}

\end{itemize} 
\end{defi}
\begin{lem} \label{L^u}
Let $U$ be an $A$-fibered  $(G,H)$-biset, let  $V$ be an  $A$-fibered  $(H,K)$-biset. For any   $u\otimes v \in U\otimes_{AH} V$ and  $L \leq G$, one has 
\begin{enumerate}[(a)]
\item  $(L^u)^v =L^{u\otimes v}$.
\item Assume that  $U$ is right-free. Then $L\leq p_1( (G\times K)_{u\otimes v})$  if only if $L\leq p_1((G\times H)_u)$ y $L^u \leq p_1((H\times K)_v)$.
\end{enumerate}
\end{lem}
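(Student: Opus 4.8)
The plan is to reduce both parts to elementary statements about the composition operation $\ast$ of subgroups, using the identity $L^u = L\ast(G\times H)_u$ recorded in the Notation together with the product formula for stabilizers from Lemma \ref{estProd}. Throughout I abbreviate $A:=(G\times H)_u\leq G\times H$ and $B:=(H\times K)_v\leq H\times K$; by Lemma \ref{estProd} the subgroup part of the stabilizing pair of $u\otimes v$ is $(G\times K)_{u\otimes v}=A\ast B$, and by definition $L^u=L\ast A=\{h\in H\mid \exists\, l\in L,\ (l,h)\in A\}$.

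For part (a) I would simply unwind the set-builder descriptions on both sides. On one hand, $(L^u)^v=\{k\in K\mid \exists\, h\in L^u,\ (h,k)\in B\}$, and substituting the description of $L^u$ turns this into the set of $k$ for which there exist $l\in L$ and $h\in H$ with $(l,h)\in A$ and $(h,k)\in B$. On the other hand, $L^{u\otimes v}=\{k\in K\mid \exists\, l\in L,\ (l,k)\in A\ast B\}$, and expanding $A\ast B$ via its definition yields exactly the same condition. This is nothing more than the associativity of composition of relations, so the two sets coincide; no hypothesis on $U$ is needed here.

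For part (b) I would treat the two implications separately. The backward implication is direct and uses no freeness: given $l\in L\leq p_1(A)$, choose $h$ with $(l,h)\in A$; then $h\in L^u\leq p_1(B)$, so some $k$ satisfies $(h,k)\in B$, whence $(l,k)\in A\ast B$ and $l\in p_1(A\ast B)=p_1((G\times K)_{u\otimes v})$. For the forward implication, the inclusion $p_1(A\ast B)\subseteq p_1(A)$ is immediate from the definition of $\ast$, which gives $L\leq p_1(A)$ at once. To obtain $L^u\leq p_1(B)$, take $h\in L^u$, so $(l,h)\in A$ for some $l\in L$; since $l\in p_1(A\ast B)$ there exist $k\in K$ and $h'\in H$ with $(l,h')\in A$ and $(h',k)\in B$.

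The main obstacle is precisely this last step: a priori the element $h$ witnessing $h\in L^u$ need not coincide with the connecting element $h'$ produced by $l\in p_1(A\ast B)$, and without control one only learns that \emph{some} preimage of $l$ lies in $p_1(B)$. This is where right-freeness of $U$ enters: it says $k_2(A)=k_2((G\times H)_u)=\{1\}$, and from $(l,h),(l,h')\in A$ one gets $(1,hh'^{-1})\in A$, forcing $h=h'$. Hence $(h,k)\in B$, so $h\in p_1(B)$ and therefore $L^u\leq p_1(B)$. I expect identifying and correctly invoking this uniqueness of the connecting element to be the only genuinely subtle point of the argument.
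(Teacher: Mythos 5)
Your argument is correct. Note that the paper states Lemma \ref{L^u} without any proof (as it does with Lemma \ref{estProd}), so there is nothing to compare against; your write-up supplies the missing argument, and it is the natural one: part (a) is associativity of the relation composition $\ast$ combined with $(G\times K)_{u\otimes v}=(G\times H)_u\ast(H\times K)_v$, and in part (b) you correctly isolate the only nontrivial point, namely that for the forward implication the witness $h$ with $(l,h)\in(G\times H)_u$ must agree with the connecting element $h'$ coming from $l\in p_1\bigl((G\times H)_u\ast(H\times K)_v\bigr)$, which is exactly what $k_2((G\times H)_u)=\{1\}$ forces (a simple example with $(H\times K)_v$ trivial and $k_2((G\times H)_u)=H$ shows the implication genuinely fails otherwise). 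The one point worth making explicit is your reading of ``right-free'': freeness of the $H$-action on $U$ itself only yields $\ker(\phi_{u,2})=\{1\}$, whereas your argument needs freeness of the right $(H\times A)$-action (equivalently, of the $H$-action on the $A$-orbits of $U$), which is what is equivalent to $k_2((G\times H)_u)=\{1\}$; since the lemma is only invoked in the paper under the standing condition $k_2$, this is the intended hypothesis, but it deserves a sentence in the proof.
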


\begin{thm} \label{F^+ conjuga}

Let $\mathcal{(G, S)}$ be as \ref{axioms}, such that satisfying Axioms  $(i)$, $(ii)$, $(iii)$, $(vii)$ and  $(viii)$ and assume that for every  $G, H\in \mathcal{G}$ and for every  $ (D,\phi) \in \mathcal{S}(G, H)$ one has    $k_2(D) = \left\lbrace 1\right\rbrace $.  Set $\mathcal{D:= C}^A\mathcal{(G, S)}$ and $\mathcal{D^+ = C(G, S^+)}$. Then  the constructions in Definition \ref{F^+}  define an $R$-lineal functor  $\textbf{--}^+ : \mathcal{F_{D,R}} \longrightarrow \mathcal{F_{D^+,R}}.$
\end{thm}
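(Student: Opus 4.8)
The groundwork laid in Definition \ref{F^+} already establishes that, for a fixed $F\in\mathcal{F}_{\mathcal{D},R}$, each $F^+([U])$ is a well-defined $R$-linear map $F^+(H)\to F^+(G)$ depending only on the class $[U]\in B^A(G,H)$ and additive in it, so that $F^+\colon B^A(G,H)\to\mathrm{Hom}_R(F^+(H),F^+(G))$ is a group homomorphism. Hence to prove $F^+\in\mathcal{F}_{\mathcal{D}^+,R}$ I only need to verify that $F^+$ preserves identities and composition, and then that $F\mapsto F^+$ is itself an $R$-linear functor. The identity is quick: the identity of $G$ in $\mathcal{D}^+$ is $\left[G\times G/(\Delta(G),1)\right]$, whose underlying biset is transitive with a single $(A,G)$-orbit represented by a $u$ with $((G\times G)_u,\phi_u)=(\Delta(G),1)$; for this $u$ one has $K^u=K$, $(K\times G)_u=\Delta(K)$, and $\phi_u\ast\lambda=\lambda$, so the condition $\phi_u\ast\lambda=\alpha$ forces $\lambda=\alpha$ and the $(K,\alpha)$-component of $F^+(\mathrm{id}_G)(x)$ equals $F\left(\left[K\times K/(\Delta(K),1)\right]\right)(x_{(K,\alpha)})=x_{(K,\alpha)}$.

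The crux is composition. Given $U$ with stabilizing pairs in $\mathcal{S}^+(G,H)$ and $V$ with stabilizing pairs in $\mathcal{S}^+(H,K)$, I want $F^+([U\otimes_{AH}V])=F^+([U])\circ F^+([V])$. Evaluating the right-hand side on $(z_{(M,\mu)})_{(M,\mu)\in\mathcal{M}^\prime(K)}$ and unwinding Definition \ref{F^+} produces, in the $(N,\nu)$-component, a double sum over $u\in[A\backslash U/H]$ and $v\in[A\backslash V/K]$ of terms
\[
F\left(\left[\frac{N\times N^u}{(N\times H)_u,\phi_u}\right]\right)\left(F\left(\left[\frac{N^u\times (N^u)^v}{(N^u\times K)_v,\phi_v}\right]\right)(z_{((N^u)^v,\mu)})\right),
\]
subject to $N\le p_1((G\times H)_u)$, $N^u\le p_1((H\times K)_v)$, $\phi_u\ast\lambda=\nu$ and $\phi_v\ast\mu=\lambda$. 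The plan is to collapse this into the single sum defining $F^+([U\otimes_{AH}V])$. Applying $F$ to the composite of the two inner bisets and using the Mackey formula (Theorem \ref{mackey}): since $p_2((N\times H)_u)=N^u=p_1((N^u\times K)_v)$, the double-coset index set is a single point, so the product is one transitive biset. Lemma \ref{estProd} identifies its stabilizing pair as $((N\times H)_u,\phi_u)\ast((N^u\times K)_v,\phi_v)=((N\times K)_{u\otimes v},\phi_{u\otimes v})$, and Lemma \ref{L^u}(a) gives $(N^u)^v=N^{u\otimes v}$, so the inner expression becomes $F\left(\left[N\times N^{u\otimes v}/((N\times K)_{u\otimes v},\phi_{u\otimes v})\right]\right)(z_{(N^{u\otimes v},\mu)})$. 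By associativity of $\ast$ the two conditions on $\lambda$ combine into $\phi_{u\otimes v}\ast\mu=\nu$, and the surviving pairs $(u,v)$ reindex bijectively over $[A\backslash(U\otimes_{AH}V)/K]$, yielding precisely the $(N,\nu)$-component of $F^+([U\otimes_{AH}V])(z)$.

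I expect this composition step to be the main obstacle, and the hypothesis $k_2(D)=\{1\}$ for every $(D,\phi)\in\mathcal{S}(G,H)$ is exactly what makes it run. Since the stabilizing pairs of $U$ and $V$ lie in $\mathcal{S}^+$, i.e.\ in $\mathcal{S}$ of their projections, condition $k_2$ forces $k_2$ of each such pair to be trivial, with two consequences. First, each biset is right-free, so Lemma \ref{L^u}(b) applies and gives the equivalence $N\le p_1((G\times K)_{u\otimes v})\iff N\le p_1((G\times H)_u)\text{ and }N^u\le p_1((H\times K)_v)$ that matches the summation ranges of the two sides. Second, the sets $W_{u,N}=k_2((G\times H)_u)\cap N^u$ are trivial, so the compatibility conditions $\phi_{u,2}|_{W_{u,N}}=\lambda|_{W_{u,N}}$ disappear and do not interfere with the bookkeeping. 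Here one must also check that, for fixed $u$ and $\nu$, the intermediate $\lambda$ is \emph{uniquely} determined by $\phi_u\ast\lambda=\nu$; this again uses $k_2=\{1\}$, which makes $p_1$ injective on $(N\times H)_u$ and hence the map $\lambda\mapsto\phi_u\ast\lambda$ injective, so substituting the output of $F^+([V])$ into $F^+([U])$ is unambiguous.

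Finally, for the functoriality of $-^+$ on morphisms, I define for $\eta\colon F\to F'$ in $\mathcal{F}_{\mathcal{D},R}$ the map $\eta^+_G\colon F^+(G)\to F'^+(G)$ componentwise by $(x_{(K,\lambda)})\mapsto(\eta_K(x_{(K,\lambda)}))$. Naturality of $\eta$ against the isomorphisms $\mathrm{Iso}^{{}^gK}_K$ shows the image still satisfies the ${}^g$-invariance of Definition \ref{F^+}, so $\eta^+_G$ lands in $F'^+(G)$; and because every entry of $F^+([U])$ is obtained by applying $F$ to a biset lying in $\mathcal{D}$, the naturality square $F'^+([U])\circ\eta^+_H=\eta^+_G\circ F^+([U])$ follows termwise from $\eta\circ F(b)=F'(b)\circ\eta$. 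That $(\eta'\circ\eta)^+=\eta'^+\circ\eta^+$, that identities are sent to identities, and that $\eta\mapsto\eta^+$ is $R$-linear, are all immediate from the componentwise definition, completing the proof.
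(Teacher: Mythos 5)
Your proposal is correct and follows essentially the same route as the paper's proof: the composition identity is established via the Mackey formula (collapsing to a single double coset because $p_2((N\times H)_u)=N^u=p_1((N^u\times K)_v)$), Lemma \ref{estProd} for the stabilizing pair of $u\otimes v$, Lemma \ref{L^u} for $(N^u)^v=N^{u\otimes v}$ and the equivalence of summation ranges, right-freeness from the $k_2$ condition to reindex representatives, and merging the two homomorphism conditions into $\phi_{u\otimes v}\ast\mu=\nu$. Your explicit verifications of identity preservation, of the uniqueness of the intermediate $\lambda$, and of the action of $-^+$ on natural transformations fill in details the paper dispatches with ``follows immediately from the definitions,'' but they do not change the argument.
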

\begin{proof}
Let $G, H$, $K \in \mathcal{G}$, and let  $U$ be an $A$-fibered $(G.H)$-biset such that the  pairs s for any element  is element of  $\mathcal{S}^+ (G,H)$, and let  $V$ be an $A$-fibered  $(H,K)$-biset, such that the  pair stabilizer for any element  is elements  of  $\mathcal{S}^+ (H,K)$, we show that
\begin{align*}
F^+(U\otimes_{AH} V)= F^+(U)\circ F^+(V) = F^+(K)\longrightarrow F^+(G).
\end{align*}
Let $c=(c_{(N,\beta)})_{(N,\beta) \in \mathcal{M}^\prime(K)} \in F^+ (K)$. Then for any $(L, \lambda) \in \mathcal{M}^\prime (G)$, the  $(L, \lambda)$-component  of  \- $F^+(U\otimes_{AH} V) (c)$ is equal to  
\begin{align*}
\sum_{\substack{u\otimes v \in [A\backslash (U\otimes_{AH} V)/K]\\L \leq p_1((G\times K)_{u\otimes v}) \\ \phi_u \ast \beta = \lambda}} F \left( \left[ \dfrac{L\otimes L^{u\otimes v}}{(L\times K)_{u\otimes v}, \phi_{u\otimes v}}\right]\right)   \left( c_{(L^{u\otimes v}, \beta)}\right). 
\end{align*}
Next, we compute the right-hand side.  We set  $b:=(b_{(M, \alpha)})_{(M, \alpha) \in \mathcal{M}^\prime(H)}= F^+ (V)(c)$,  then
\begin{align*}
b_{(M,\alpha)} =  \sum_{\substack{v\in [A\backslash V/K]\\M\leq p_1((H\times K)_v)\\ \phi_v \ast \psi = \alpha}}  F^+ \left( \left[ \dfrac{M\times M^v}{(M\times K)_v, \phi_v}\right]  \right) \left( c_{(M^v, \psi)}\right). 
\end{align*}
 and set  $a=(a_{(L, \lambda)})_{(L, \lambda )\in \mathcal{M}^\prime (G)}= F^+(U)(b)$, then 

\begin{equation}\label{a_l}
\begin{split}
a_{(L, \lambda)}&= \sum_{\substack{u\in [A\backslash U/H]\\L \leq p_1((G\times H)_{u})\\ \phi_u \ast \alpha = \lambda}} F \left( \left[ \dfrac{L\times L^u}{(L\times H)_u, \phi_u} \right] \right) \left( b_{(L^u, \alpha) } \right)
\end{split}
\end{equation} 
this is equal to 
\begin{align*}
\sum_{\substack{u\in [A\backslash U/H]\\L \leq p_1((G\times H)_{u}) \\ \phi_v \ast \psi = \alpha \\ v\in [A\backslash V/K]\\L^u \leq p_1((H\times K)_{v}) \\ \phi_u \ast \alpha = \lambda }}     F \left( \left[ \dfrac{L\times L^u}{(L\times H)_u, \phi_u} \right]  \otimes_{AL^u} \left[ \dfrac{L^u \times (L^u)^v}{(L^u\times K)_v , \phi_v}\right] \right) \left( c_{((L^u)^v, \psi)} \right).
\end{align*}
Note that:
\begin{itemize}
    \item $p_2((L\times H)_u)=L^u = p_1((L^u\times K)_v)$ 
    \item $\phi_u =\phi_v$ in  \- $k_2((L\times H)_u)\cap k_1((L^u\times H)_v)= \lbrace 1 \rbrace$,
\end{itemize}
 by Mackey's  formula 
\begin{align*}
 \left[ \dfrac{L\times L^u}{(L\times H)_u, \phi_u} \right]  \otimes_{AL^u} \left[ \dfrac{L^u \times (L^u)^v}{(L^u\times K)_v , \phi_v}\right] = \left[ \frac{L\times L^{u\otimes v}}{((G\times H)_u, \phi_u) \ast (L^u\times K)_v,\phi_v)}\right]. 
\end{align*}
Now, by Lemma \ref{L^u},  one has  (\ref{a_l})  is equal to 
\begin{align*}
\sum_{\substack{v\in [A\backslash V/K]\\ u \in [A \backslash U/H]\\ L\leq p_1((G\times K)_{u\otimes v}) \\ \phi_v \ast \psi =\alpha\\ \phi_u  \ast \alpha = \lambda}} F\left(\left[ \frac{L\times L^{u\otimes v}}{((G\times H)_u, \phi_u) \ast (L^u\times K)_v,\phi_v)}\right]  \right) \left(  c_{(L^{u\otimes v}, \psi)}\right).
\end{align*}
Since $U$ is right-free, one  has for any sets of representatives   $[A\backslash U /H]$ and  $[A\backslash V/K]$,  the set  \- $ \lbrace u\otimes v \mid u \in [A\backslash U /H] \text{ and } v\in [A\backslash  V/K]\rbrace$  is a set of representatives of $A\backslash (U\otimes_{AH} V)/ K$. Moreover, by Lemma \ref{estProd}, one has   

\begin{align*}
a_{(L, \lambda)}= \sum_{\substack{u\otimes v\in [A\backslash (U\otimes_{AH} V)/K]\\\\ L\leq p_1((G\times K)_{u\otimes v}) \\ \phi_v \ast \psi =\alpha\\ \phi_u  \ast \alpha = \lambda}} F\left(\left[ \frac{L\times L^{u\otimes v}}{(L \times K)_{u\otimes v}, \phi_{u\otimes v}}\right]  \right) \left(  c_{(L^{u\otimes v}, \phi_{u\otimes v})}\right),
\end{align*}
Since $\alpha$ does not appear in the sum,  we can replace the conditions   $ \phi_v \ast \psi =\alpha$ and $ \phi_u  \ast \alpha = \lambda$ by  $ \phi_u \ast \phi_v \ast \psi = \lambda$, and by lemma \ref{estProd}, one has  $\phi_u \ast \phi_v = \phi_{u\otimes v}$. Thus, 
\begin{align*}
F^+(U\otimes_{AH} V)(c)= F^+(U)\circ F^+(V)(c).
\end{align*}
That $F^+$ preserves identity morphisms and is $R$-linear follows immediately
from the definitions.

\end{proof}

\subsection{Multiplicative Structure of $F^+(G)$}
\begin{thm}
Let the pair $\mathcal{(G,S)}$ satisfy axioms $(i)$ to $(iii)$, define $ \mathcal{D:=C}^A \mathcal{(G,S)}$, $ \mathcal{D^+:=C}^A \mathcal{(G,S^+)}$, and let $F\in \mathcal{F}_{\mathcal{D},R}^{ A}$ such that $F(G)$ has a multiplicative structure for every $G\in \mathcal{G}$. If $(\mathcal{G,S})$ also satisfies axioms $(vii)$ and $(viii)$, and condition $k_2$, then $F^+(G)$ has a multiplicative structure for every $G\in \mathcal{G}$.
\end{thm}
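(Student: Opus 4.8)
The plan is to equip each $F^+(G)$ with the fiberwise ring structure inherited from the rings $F(K)$, and then to check that this structure is preserved by the defining equivariance condition of $F^+(G)$. Concretely, for $x=(x_{(K,\lambda)})$ and $y=(y_{(K,\mu)})$ in $F^+(G)$ I would define the product $x\cdot y=(z_{(K,\nu)})_{(K,\nu)\in\mathcal{M}^\prime(G)}$ by
\begin{align*}
z_{(K,\nu)}=\sum_{\substack{\lambda,\mu\in K^\ast\\ \lambda\mu=\nu}} x_{(K,\lambda)}\cdot y_{(K,\mu)},
\end{align*}
where the product on the right is taken in the ring $F(K)$. Since every element of $\bigoplus_{(K,\lambda)}F(K)$ has finite support, each such sum is finite, so $x\cdot y$ is again a finitely supported family. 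The identity element is the family $e$ with $e_{(K,1)}=1_{F(K)}$ for each $K\in\Sigma_{\mathcal{G}}(G)$ and $e_{(K,\lambda)}=0$ for $\lambda\neq 1$; this family has finite support because $G$ has only finitely many subgroups. When $A=\lbrace\cdot\rbrace$ the fiber $K^\ast$ is trivial and the product reduces to the componentwise product over subgroups $K$, recovering the multiplicative structure of the upper plus construction for ordinary biset functors; combining the fibers multiplicatively (rather than, say, matching them up) is what is needed for the mark morphism to be multiplicative later, in analogy with the factor $\phi\cdot{}^g\psi$ appearing in the product on $F_+$.

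First I would check that $x\cdot y$ actually lies in $F^+(G)$, i.e. that ${}^g z_{(K,\nu)}=z_{({}^gK,\,{}^g\nu)}$ for all $g\in G$. Applying ${}^g=F(Iso^{{}^gK}_K)$ and using that this isomorphism acts as a ring homomorphism (hence is additive and multiplicative), together with the fact that $\lambda\mapsto {}^g\lambda$ is a group isomorphism $K^\ast\to({}^gK)^\ast$ with ${}^g(\lambda\mu)={}^g\lambda\,{}^g\mu$, one obtains
\begin{align*}
{}^g z_{(K,\nu)}=\sum_{\lambda\mu=\nu} {}^gx_{(K,\lambda)}\cdot {}^gy_{(K,\mu)}=\sum_{\lambda\mu=\nu} x_{({}^gK,\,{}^g\lambda)}\cdot y_{({}^gK,\,{}^g\mu)} .
\end{align*}
Reindexing by $\lambda^\prime={}^g\lambda$ and $\mu^\prime={}^g\mu$, and noting that $\lambda\mu=\nu$ is equivalent to $\lambda^\prime\mu^\prime={}^g\nu$, gives exactly $z_{({}^gK,\,{}^g\nu)}$. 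The same computation shows $e$ is equivariant, since $Iso^{{}^gK}_K$ preserves units and sends the trivial character of $K$ to that of ${}^gK$. This equivariance step is the crux of the argument, and its one genuine hypothesis is that the multiplicative structure on $F$ is compatible with the conjugation (iso) maps; isolating and using this compatibility is the main point, and is where I expect the only real obstacle to lie.

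The remaining ring axioms are then formal: associativity, left and right distributivity, and the unit laws all reduce, component by component, to the corresponding identities in the ring $F(K)$ together with associativity and commutativity of the character product in the abelian group $K^\ast$, so no further choices or structural maps of $F$ intervene. Finally, I would remark that Axioms $(vii)$ and $(viii)$ together with condition $k_2$ are precisely the hypotheses under which Definition \ref{F^+} and Theorem \ref{F^+ conjuga} guarantee that $F^+$ is a well-defined object of $\mathcal{F}^A_{\mathcal{D}^+,R}$; they are carried here so that $F^+(G)$ is the object being endowed with the ring structure, even though the multiplicative structure itself uses only the rings $F(K)$ and the ring-homomorphism property of the conjugation maps. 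Their genuine use reappears only in the next step, when one asks whether this product is compatible with the transition maps $F^+(U)$ — that is, whether $F^+$ is a fibered Green functor — which is where the Mackey formula and $k_2$ are actually invoked.
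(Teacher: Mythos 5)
Your construction is correct, and it supplies considerably more than the paper does: the paper's entire proof of this theorem is a single sentence announcing that the multiplicative structure ``will be defined entrywise,'' with no formula given and nothing verified. Your product is entrywise in the subgroup variable $K$ but a convolution in the character variable, $z_{(K,\nu)}=\sum_{\lambda\mu=\nu}x_{(K,\lambda)}\cdot y_{(K,\mu)}$, and this is the right choice: it is exactly what one obtains by restricting the external product of Lemma \ref{green^+} along the diagonal $\Delta(G)\leq G\times G$, it reduces to the literal componentwise product when $A$ is trivial, and (as you anticipate) it is the choice forced by the factor $\phi\cdot{}^g\psi$ in the product on $F_+$ if the mark morphism is to be multiplicative; a product that is componentwise in both variables would fail on all three counts. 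Your verifications (finite support of the convolution, equivariance via the reindexing $\lambda\mapsto{}^g\lambda$, the unit supported on the trivial characters, and the reduction of the ring axioms to those of $F(K)$ and of the abelian group $K^\ast$) are all sound. The one point worth making explicit is the hypothesis: ``$F(G)$ has a multiplicative structure'' is, as literally stated, too weak to guarantee that ${}^g=F(Iso^{{}^gK}_K)$ is a unital ring homomorphism, which your equivariance step requires; this holds when $F$ carries a Green $A$-fibered structure in the sense of Definition \ref{Green-fiberd} (functoriality of $\times$ applied to iso morphisms), which is clearly the intended reading and is the same implicit strengthening the paper already relies on in the parallel construction of the product on $F_+$. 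Your closing remark that axioms $(vii)$, $(viii)$ and condition $k_2$ are needed only so that $F^+$ itself is defined, and reappear genuinely only when checking compatibility of the product with the maps $F^+(U)$, is also accurate.
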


\begin{proof}
For every $G \in \mathcal{G}$, we will give $F^+(G)$ a multiplicative structure, which will be defined entrywise.

\end{proof}

\section{ The mark morphism} \label{sect  mark morphism}
The goal of this section is given a morphism between the construction lower plus and the construction upper plus for an $A$-fibered biset functor, such that this morphism  behaves similarly to an isomorphism of modules in each evaluation. 
Let, $\mathcal{(G,S)}$ as  Definition \ref{axioms},  such that satisfying  Axioms $(i)$-$(iv)$, $(vi)$, $(vii)$ and  $(viii)$  and the condition $k_2$. Set $\mathcal{D:=(G,S)}$, $\mathcal{D_+:=(G,S_+)}$ and  $\mathcal{D^+:=(G,S^+)}$, 
by proposition \ref{D^+} (c) and  \ref{D_+} (c), one has $\mathcal{E:=D^+ =D_+}$.  For $F\in \mathcal{F}^A_{\mathcal{E},R}$ and $G\in \mathcal{G}$, we define the  $R$-lineal map  $$m_{F,G}: F_+(G) \longrightarrow F^+(G)$$
which send the class $[X,s]\in F_+(G)$  to $(a_{(L \lambda)})_{(L,\lambda)\in \mathcal{M}^\prime(G)} \in F^+(G)$ with 
\begin{align*}
a_{(L, \lambda)} =\sum_{\begin{subarray}{l} x\in[ X/A]\\(L, \lambda) \leq (G_x, \phi_x) \end{subarray}} F\left( Res^{G_x}_L  \right) (s(x)), 
\end{align*}
where $[X/A]$  is a set of representative $A$-orbits of  $X$, this map we call "The mark morphism".  The mark morphism is well-defined, that does not depend on the choice of representative  of $[X,s]$.  

\begin{thm}
Assume that  $\mathcal{(G,S)}$ satisfies Axioms $(i)$-$(iv)$, $(vi)$, $(vii)$ and  $(viii)$ in \ref{axioms}. Additionally, the condition $k_2$.  Let  $G,H\in \mathcal{G}$ and let $U$ be an $A$-fibered   $(G,H)$-biset such that all stabilizer   pairs are elements of $\mathcal{S}^+(G,H)=\mathcal{S}_+(G,H)$. Then,  for any  functors  $F \in \mathcal{F}^A_{\mathcal{E},R}$ ,  the diagram \\
\begin{equation*}
\xymatrix{
F_+(H)\ar[r]^{m_{F,H}} \ar[d]_{F_+([U])} & F^+(H) \ar[d]^{F^+([U])}\\
F_+(G)\ar[r]_{m_{F,G}} & F^+(G) 
}
\end{equation*}
commutes. In particular, the $R$-linear maps $m_{F,G}$, $G \in \mathcal{G}$, define a morphism $m_F : F_+ \longrightarrow F^+$ in $\mathcal{F}^A_{\mathcal{E},R}$ which we will call the mark morphism associated with F. 
\end{thm}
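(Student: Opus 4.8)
The plan is to verify the identity $m_{F,G}\circ F_+([U]) = F^+([U])\circ m_{F,H}$ by reducing to the simplest possible data on both the source module and the biset, and then computing the two composites explicitly from the formulas already established. Since every arrow in the square is $R$-linear, and since $F_+([U])$, $F^+([U])$ and the mark maps are additive in $[U]$ (the former because $F_+\colon B^A(G,H)\to Hom_R$ is a group homomorphism, the latter because the sum in (\ref{F^+ (U)}) is additive in $[U]$), it suffices to treat the case where $U$ is a transitive $A$-fibered biset $U=\left[\frac{G\times H}{(D,\phi)}\right]$ with $(D,\phi)\in\mathcal{S}_+(G,H)=\mathcal{S}^+(G,H)$, and to evaluate both composites on a generator $[K,\psi,a]_H$ of $F_+(H)$, where $K\in\Sigma_{\mathcal{G}}(H)$, $\psi\in K^\ast$ and $a\in F(K)$.

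For the lower path I first apply formula (\ref{F_+[trans](---)}) to $F_+([U])([K,\psi,a]_H)$. Because the data satisfies condition $k_2$, one has $k_2(D)=\{1\}$, so $X_h=k_2(D)\cap{}^hK$ is trivial and the compatibility constraint $\phi_2\vert_{X_h}={}^h\psi_2\vert_{X_h}$ drops out; the output is a sum of basic elements $[D\ast{}^hK,\ \phi\ast{}^h\psi,\ b_h]_G$ indexed by $h\in[p_2(D)\backslash H/K]$, with $b_h=F\!\left(\left[\frac{D\ast{}^hK\times{}^hK}{(D\ast\Delta({}^hK),\phi)}\right]\right)({}^ha)$. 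Applying $m_{F,G}$ to each basic element through the formula $m_{F,G}([M,\mu,b]_G)_{(L,\lambda)}=\sum_{gM,\ (L,\lambda)\leq({}^gM,{}^g\mu)}F(Res^{{}^gM}_L)({}^gb)$ yields, for each $(L,\lambda)\in\mathcal{M}^\prime(G)$, a double sum over pairs $(h,gM_h)$ with $M_h=D\ast{}^hK$, subject to $L\leq{}^gM_h$ and $\lambda={}^g(\phi\ast{}^h\psi)\vert_L$; collapsing the two nested applications of $F$ by functoriality rewrites each summand as $F$ of a single composite biset applied to ${}^g({}^ha)$.

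For the upper path I first compute $c:=m_{F,H}([K,\psi,a]_H)=(c_{(N,\nu)})$, whose entries are $c_{(N,\nu)}=\sum_{h'K,\ (N,\nu)\leq({}^{h'}K,{}^{h'}\psi)}F(Res^{{}^{h'}K}_N)({}^{h'}a)$, and then apply $F^+([U])$ using Definition \ref{F^+}. For the transitive biset $U$ the representatives $[A\backslash U/H]$ are indexed by $G/p_1(D)$, with $u_{g'}$ carrying the stabilizing pair ${}^{(g',1)}(D,\phi)$, so that $p_1((G\times H)_{u_{g'}})={}^{g'}p_1(D)$ and $L^{u_{g'}}=L\ast{}^{(g',1)}D$. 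Substituting the entries of $c$ into (\ref{F^+ (U)}), the $(L,\lambda)$-component of $F^+([U])(c)$ becomes a double sum over pairs $(g',h')$, with the $W_{u,L}$-constraint again trivial by condition $k_2$, and functoriality once more collapses the inner $F(Res)$ and the outer $F([\,\cdot\,])$ into $F$ of one composite biset.

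The heart of the proof, and the step I expect to be the main obstacle, is to produce a bijection between the two index sets — pairs $(h,gM_h)$ on the lower side and pairs $(g',h')$ on the upper side — under which the matched composite bisets and the matched conjugates of $a$ coincide. I would read off this bijection from the Mackey structure, using the identities $L^{u_{g'}}=L\ast{}^{(g',1)}D$ and $D\ast{}^hK=p_1\!\big(D\cap(G\times{}^hK)\big)$ to translate the single condition $L\leq{}^g(D\ast{}^hK)$ on the lower side into the pair of conditions $L\leq{}^{g'}p_1(D)$ and $L^{u_{g'}}\leq{}^{h'}K$ on the upper side, exactly as in the double-coset bookkeeping of Theorem \ref{mackey}. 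Equality of the composite bisets attached to matched indices then reduces to an associativity computation for $\ast$, combining the transfer biset $\left[\frac{D\ast{}^hK\times{}^hK}{(D\ast\Delta({}^hK),\phi)}\right]$ and the conjugation on the lower side with the biset $\left[\frac{L\times L^{u_{g'}}}{(L\times H)_{u_{g'}},\phi_{u_{g'}}}\right]$ and the restriction coming from $c$ on the upper side; here right-freeness of $U$, guaranteed by $k_2(D)=\{1\}$, is what makes the products of orbit representatives again range over a transversal, as used in Lemma \ref{estProd} and in the proof of Theorem \ref{F^+ conjuga}. Collapsing each side with the functoriality of $F$ and Theorem \ref{mackey} then shows that $F$ applied to the two equal composites yields the same element and that the conjugates of $a$ agree under the bijection, which establishes commutativity of the square; that the $m_{F,G}$ assemble into a morphism $m_F\colon F_+\to F^+$ follows, and $R$-linearity is immediate from the definitions.
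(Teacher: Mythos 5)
Your overall strategy is sound but genuinely different from the paper's, and it stalls exactly where you predict it will. The paper never reduces to transitive bisets or to basic generators: it evaluates both composites on an arbitrary $[X,s]$ and an arbitrary $U$, writes the $(L,\alpha)$-coordinate of $m_{F,G}(F_+([U])([X,s]))$ as a single sum over $[U\otimes_{AH}X/A]$ and the $(L,\alpha)$-coordinate of $F^+([U])(m_{F,H}([X,s]))$ as a double sum over $[A\backslash U/H]\times[X/A]$, and then identifies the two index sets in one stroke using right-freeness of $U$ (the set $\{u\otimes x \mid u\in[A\backslash U/H],\ x\in[X/A]\}$ is a transversal of the $A$-orbits of $U\otimes_{AH}X$), together with the single identity
\begin{equation*}
(\Delta(L),1)\ast((G\times H_x)_u,\phi_{u,x})=((L\times H)_u,\phi_u)\ast(\Delta(L^u),1)
\end{equation*}
to match the composite bisets. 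Your reductions to a transitive $U=\left[\frac{G\times H}{D,\phi}\right]$ and to generators $[K,\psi,a]_H$ are legitimate (both $F_+([U])$ and $F^+([U])$ are additive in $[U]$, and the basic elements generate $F_+(H)$), and your individual formulas for the two paths are correct, including the observation that condition $k_2$ kills the constraints $\phi_2\vert_{X_h}={}^h\psi_2\vert_{X_h}$ and $W_{u,K}$ and is what makes $L\leq{}^{g}(D\ast{}^{h}K)$ equivalent to the pair of conditions $L\leq{}^{g}p_1(D)$ and $L^{u_{g}}\leq{}^{h}K$.

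The gap is that the bijection between the lower-side index set (pairs $(h,g(D\ast{}^hK))$ with $h\in[p_2(D)\backslash H/K]$) and the upper-side index set (pairs $(g'p_1(D),h'K)$ subject to the $F^+$ and mark-morphism constraints) is asserted, not constructed: you say you ``would read it off from the Mackey structure,'' but this is precisely the nontrivial double-coset/orbit-counting argument that your reduction creates and that the paper's general formulation avoids. Until that bijection is exhibited and shown to match both the composite bisets (via associativity of $\ast$) and the conjugates of $a$, the proof is incomplete at its central step. I would either carry out that matching explicitly for transitive $U$, or, more economically, abandon the reduction and run the computation on a general $(X,s)$ as the paper does, where the orbit identification is an immediate consequence of right-freeness.
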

\begin{proof}
 Let $[X,s]\in F_+(G)$ and $(L,\alpha) \in  \mathcal{M}^\prime(G)$.  By definition, the $(L,\alpha)$-coordinate  of  $m_{F,G}(F_+(U)([X,s]))$ is equal to   
\begin{align*}
&\sum_{\substack{ u\otimes x \in [U\otimes_{AH} X/A] \\ (L, \alpha)\leq (G_{u\otimes x}, \phi_{u\otimes x})}} F(Res^{G_{u\otimes x}}_L)(U(s)(u\otimes x))\\
= &\sum_{\substack{ u\otimes x \in [U\otimes_{AH} X] \\ (L,\alpha)\leq (G_{u\otimes x},\phi_{u\otimes x})}} F \left( \left[  \frac{L\times G_{u\otimes x}}{\Delta(L),1}\right]\otimes_{AG_{u\otimes x}} \left[\frac{G_{u\otimes x }\times H_x}{(G\times H_x)_u, \phi_{u,x}} \right]  \right) (s(x))\\
=&\sum_{\substack{ u\otimes x \in [U\otimes_{AH} X] \\ (L,\alpha)\leq (G_{u\otimes x},\phi_{u\otimes x})}} F \left( \left[ \frac{L \times H_x}{(L\times H_x)_u, \phi_{u,x}} \right]  \right) (s(x)).
\end{align*}
 
On the other hand,  we set  $(a_{(K,\lambda)})_{(K,\lambda)\in \mathcal{M}^\prime(G)}= m_{F,G} ([X,s]) $  and  $$b:=(b_{(L, \alpha)})_{(L, \alpha) \in \mathcal{M}(G)}:=F^+([U])(m_{F,G} ([X,s]) ),$$ then the $(L,\alpha)$-coordinate of $B$ is equal to 
\begin{align*}
b_{(L,\alpha )}&=\sum_{\substack{ u \in [A \backslash U/H]\\ L\leq p_1((G\times H)_{u})\\ \phi_u \ast \lambda= \alpha}} F\left(\left[ \frac{L\times L^u}{(L\times H)_u , \phi_u}\right]  \right) \left( a_{(L^u,\lambda)}\right) \\
&=\sum_{\substack{ u \in [A \backslash U/H]\\ L\leq p_1((G\times H)_{u})\\ \phi_u \ast \lambda= \alpha}} F\left(\left[ \frac{L\times L^u}{(L\times H)_u , \phi_u}\right]  \right) \left( \sum_{\substack{ x \in [X/A] \\ (L^u,\lambda) \leq (H_x,\phi_x)}} F(Res^{H_x}_{L^u}) (s(x)) \right) \\
&=  \sum_{\substack{ u \in [A \backslash U/H]\\  x \in [X/A] \\ L\leq p_1((G\times H)_u )\\ \phi_u \ast \lambda=\alpha\\ (L^u, \lambda) \leq (H_x, \phi_x)}} F\left(\left[  \frac{L\times L^u}{(L\times H)_u, \phi_u)} \right]  \otimes_{AL^u} \left[ \frac{L^u \times H_x}{\Delta(L^u), 1} \right] \right)  (s(x)), 
\end{align*}
by  Lemma \ref{L^u}, this is equal to 
\begin{align*}
&=  \sum_{\substack{ u \in [A \backslash U/H]\\  x \in [X/A] \\ \phi_u \ast \lambda=\alpha\\ (L^u, \lambda) \leq (H_x, \phi_x) \\ L\leq G_{u\otimes x}}} F\left(\left[  \frac{L\times L^u}{(L\times H)_u, \phi_u)} \right]  \otimes_{AL^u} \left[ \frac{L^u \times H_x}{\Delta(L^u), 1} \right] \right)  (s(x)). 
\end{align*}
Since $U$ is a right-free,  the set $$\lbrace u\otimes x \in U\otimes_{AH
}  X \mid x \in [X/A]  \text{ and } u\in [A\backslash U/H] \rbrace$$ is a set of representative of  $A$-orbits  of $U\otimes_{AH} X$, and we can  replace  the conditions $\phi_u \ast \lambda=\alpha$ and  $ (L^u, \lambda) \leq (H_x, \phi_x)$ by     $$(L,\phi_u\ast \lambda)=(L,\alpha) \leq (G_{u\otimes x}, \phi_u\ast \phi_x)=(G_{u\otimes x }, \phi_{u\otimes x}). $$ Note that  $p_1((G\times H_x)_u)=G_{u\otimes x}$ and
\begin{align*}
(\Delta(L),1)\ast ((G\times H_x)_u, \phi_{u,x} )&=((L\times H_x)_u, \phi_{u,x})\\&=((L\times H)_u, \phi_u) \ast (\Delta(L^u), 1).
\end{align*}
Thus the $(L, \alpha) $-coordinate  of     $F^+([U])\left( m_{F,G}([X,s])\right) $
is equal to the $(L, \alpha)$-coordinate of  \- $m_{F,G}(F^+([U])([X,s]))$.
\end{proof}
The following definition gives a map that is close to an inverse to the mark morphism.
\begin{defi}\label{n_F,G}
Assume that $\mathcal{(G,S)}$ satisfies  Axioms  $(i)$-$(iv) $ $(vi)$  $(vii)$ and $(viii)$ in  \ref{axioms} and set $\mathcal{D:=C(G,S)}$. Let  $F\in \mathcal{F}^A_{\mathcal{D}, R}$ and $G\in \mathcal{G}$. We define the map  $n_{F,G} : F^+(G)\longrightarrow F_+(G)$ by $n_{F,G} \left(  (a_{K, \lambda,})_{(K,\lambda) \in \mathcal{M}^\prime (G)}\right)$ is equal to, 
\begin{align*}
 \sum_{\substack{ (L,\psi), \ (K, \lambda) \in \mathcal{M}^\prime (G)\\ (L, \psi)\leq (K, \lambda)}} |L|\mu ((L, \psi), (K, \lambda))[L, \psi, Res^K_L a_{(K,\lambda)}].
\end{align*}
Here, $\mu$ denote the  Möbius function of the poset  $\mathcal{M}^\prime (G)$.
\end{defi}
\begin{prop}
Let  $\mathcal{(G, S)}$, $\mathcal{D}$, $F$ and  $G$ be as in Definition  \ref{n_F,G}.  Then $n_{F,G} \circ m_{F,G}= |G| \cdot Id_{F_+(G)}$ and $m_{F,G} \circ n_{F,G} =|G| \cdot Id_{F^+(G)}$.
\end{prop}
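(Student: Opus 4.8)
The plan is to prove the two identities separately. Since $F_+(G)$ is generated by the elements $[H,\phi,x]_G$ with $H\in\Sigma_{\mathcal G}(G)$, $\phi\in H^\ast$ and $x\in F(H)$, I would establish $n_{F,G}\circ m_{F,G}=|G|\cdot Id$ by evaluating the composite on such a generator; the identity $m_{F,G}\circ n_{F,G}=|G|\cdot Id$ I would instead verify coordinatewise at each $(M,\beta)\in\mathcal M^\prime(G)$. In both cases the strategy is the same: unwind the defining formulas for $m_{F,G}$ and $n_{F,G}$ into a multiple sum over the poset $\mathcal M^\prime(G)$, use the conjugation invariance built into the two constructions to replace a sum over cosets by a sum over group elements (which both cancels the factor $|L|$ occurring in the definition of $n_{F,G}$ and produces the global factor $|G|$), and finally apply Möbius inversion to collapse everything to a single diagonal term.

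For $n_{F,G}\circ m_{F,G}$: first I would compute $m_{F,G}([H,\phi,x]_G)$ from the description of the mark on the fibered $G$-set $\tfrac{G\times A}{(H,\phi)}$, whose $A$-orbits are indexed by cosets $gH$ with stabilizing pair $(^gH,{}^g\phi)$; its $(K,\lambda)$-component is $\sum_{gH:\,(K,\lambda)\le(^gH,{}^g\phi)}F(Res^{^gH}_K)(^gx)$. Applying $n_{F,G}$ and using transitivity of restriction, the contribution to a basis element $[L,\psi,-]_G$ becomes, for each admissible $g$, the inner sum $\sum_{(K,\lambda):\,(L,\psi)\le(K,\lambda)\le(^gH,{}^g\phi)}\mu((L,\psi),(K,\lambda))$, which by the defining relation of the Möbius function equals $\delta_{(L,\psi),(^gH,{}^g\phi)}$. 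Thus only the terms with $(L,\psi)=(^gH,{}^g\phi)$ survive, and since $[^gH,{}^g\phi,{}^gx]_G=[H,\phi,x]_G$ for all $g$ by conjugation invariance of the generators and $|{}^gH|=|H|$, summing over the $|G/H|$ cosets yields $|G/H|\cdot|H|\cdot[H,\phi,x]_G=|G|\,[H,\phi,x]_G$.

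For $m_{F,G}\circ n_{F,G}$: I would substitute the definition of $n_{F,G}$ into the coordinate formula for the mark and use the $F^+$-equivariance $^g a_{(K,\lambda)}=a_{(^gK,{}^g\lambda)}$ to write the $(M,\beta)$-component of $m_{F,G}(n_{F,G}(a))$ as a sum over triples $\big((K,\lambda),(L,\psi),gL\big)$ subject to $(L,\psi)\le(K,\lambda)$ and $(M,\beta)\le(^gL,{}^g\psi)$, weighted by $|L|\,\mu((L,\psi),(K,\lambda))$ with summand $Res^{^gK}_M(a_{(^gK,{}^g\lambda)})$. The key observation is that, because $A$ is abelian, inner conjugation fixes $\psi$ and $\lambda$, so this summand depends only on the coset $gL$; replacing $\sum_{gL}$ by $\tfrac1{|L|}\sum_{g\in G}$ cancels the factor $|L|$, and reindexing by the conjugates $(^gK,{}^g\lambda)$, $(^gL,{}^g\psi)$ makes the summand independent of $g$, producing the factor $|G|$. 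What remains is $|G|\sum_{(K,\lambda)\ge(M,\beta)}Res^K_M(a_{(K,\lambda)})\big(\sum_{(L,\psi):\,(M,\beta)\le(L,\psi)\le(K,\lambda)}\mu((L,\psi),(K,\lambda))\big)$, and Möbius inversion collapses the inner sum to $\delta_{(M,\beta),(K,\lambda)}$, leaving $|G|\,a_{(M,\beta)}$.

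The main obstacle is the bookkeeping of the conjugation action: one must check carefully that the passage from a sum over cosets $gL$ (or $gH$) to a sum over all of $G$ is legitimate, i.e. that the relevant summands are genuinely constant on cosets, which rests on the fact that a homomorphism into the abelian group $A$ is invariant under inner conjugation, and that reindexing by $g$-conjugation preserves both the relation $\le$ on $\mathcal M^\prime(G)$ and the Möbius function $\mu$. Once these invariances are in place, both identities reduce to the single combinatorial fact $\sum_{(L,\psi):\,(M,\beta)\le(L,\psi)\le(K,\lambda)}\mu((L,\psi),(K,\lambda))=\delta_{(M,\beta),(K,\lambda)}$, exactly as in the non-fibered case of \cite{constructios+}.
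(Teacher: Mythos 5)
Your proposal is correct and follows essentially the same route as the paper: unwind both maps into a double sum over $\mathcal{M}^\prime(G)$, use conjugation-invariance to trade the coset sum for a sum over $G$ (cancelling the $|L|$ and producing $|G|$), and collapse with M\"obius inversion. The only cosmetic difference is that for the first identity you evaluate on the generators $[H,\phi,x]_G$ while the paper works with a general $[X,s]$ and counts the $|G|/|G_x|$ representatives in each $(G,A)$-orbit; these are the same computation.
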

\begin{proof}
Let  $[X, s] \in F_+(G)$, one has  $n_{F,G}(m_{F,G}([X,s]))$ is equal to 
\begin{align*}
& \sum_{\substack{ (L,\psi), \ (K, \lambda) \in \mathcal{M}^\prime (G)\\ (L, \psi)\leq (K, \lambda)}} |L|\mu ((L, \psi), (K, \lambda))[L, \psi, Res^K_L (m_{F,G}([X,S]))_{(K,\lambda)}]\\
 &= \sum_{\substack{ (L,\psi), \ (K, \lambda) \in \mathcal{M}^\prime (G)\\ (L, \psi)\leq (K, \lambda) \\ x\in [X/A]\\\ (K,\lambda)\leq (G_x, \phi_x)}} |L|\mu ((L, \psi), (K, \lambda))\left[ L, \psi, Res^K_L \left(  F(Res^{G_x}_K) (s(x))\right) \right] \\
\end{align*} 
this is equal to 
\begin{align*} 
 \sum_{\substack{ (L,\psi), \ (K, \lambda) \in \mathcal{M}^\prime (G)\\ (L, \psi)\leq (K, \lambda)}} |L|      \sum_{\substack{ x\in [X/A] \\ (L,\psi)\leq (K,\lambda)\leq (G_x,\phi_x )} }  \mu ((L, \psi), (K, \lambda))[L, \psi, F(Res^{G_x}_L) (s(x))]
\end{align*}
where $(m_{F,G}([X,S]))_{(K,\lambda)}$ denotes the  $(K,\lambda)$-component of $m_{F,G}([X,S])$. By the standard  properties of $\mu$, the last equation is equal to, 
\begin{align*}
n_{F,G}(m_{F,G}([X,s]))&=\sum_{\substack{ (L,\psi),\in \mathcal{M}^\prime(G)}}      \sum_{ \substack{x\in [X/A]\\ (L,\psi)=(G_x, \phi_x)}}  | G_x| [G_x, \phi_x, F(Res^{G_x}_{G_x}) (s(x))]\\
&= |G| \sum_{x\in [G\backslash X/ A]}  [G_x, \phi_x, s(x)]\\
&= |G| [X,s]
\end{align*}
since $$|\lbrace  y \in [X/A] \mid [G_y, \phi_y, s(y)] =[G_x, \phi_x, s(x)] \rbrace|= |G|/|G_x|.$$
The second equality is a similar adaptation of the proof of  Proposition 2.4. of \cite{canonicalinductionformulae}.  For any $(a_{(K,\lambda)})_{(K,\lambda)\in \mathcal{M}^\prime (G)} \in F^+(G)$, one has  $n_{F,G}((a_{(K,\lambda)})_{(K,\lambda)\in \mathcal{M}^\prime (G)})$ is equal to 
\begin{align*}
\sum_{\substack{ (L, \psi), \ (K,\lambda) \in \mathcal{M}^\prime (G) \\ (L,\psi) \leq (K, \lambda)}} |L| \mu ((L, \psi), (K, \lambda)) [L, \psi, F(Res^K_L) (a_{(K,\lambda)})]
\end{align*}
It follows that,  $m_{F,G} \left( n_{F,G}((a_{(K,\lambda)})_{(K,\lambda)\in \mathcal{M}^\prime (G)}) \right) $ is equal to 
\begin{align*}
\sum_{\substack{ (L, \psi), \ (K,\lambda) \in \mathcal{M}^\prime (G) \\ (L,\psi) \leq (K, \lambda)}} |L| \mu ((L, \psi), (K, \lambda)) m_{F,G}([L, \psi, F(Res^K_L) (a_{(K,\lambda)})]).
\end{align*}
Note that  $m_{F, G} ([L, \psi, F(Res^K_L)(a_{(K, \lambda)})] = (b^{(L,\psi)}_{(T, \alpha)})_ {(T, \alpha) \in \mathcal{M}^\prime(G)}$, where
\begin{align*}
b_{(T,\alpha)}^{(L,\psi)} = \sum_{ \substack{gL \in [G/L] \\ (T,\alpha ) \neq ^g(L, \psi )}} F(Res^{^gL}_T  \circ Res^ {^g K}_{^gL}) (^ga_{K,\lambda}).
\end{align*}
Then, the  $(T,\alpha)$- component of  $m_{F,G} \left( n_{F,G}((a_{(K,\lambda)})_{(K,\lambda)\in \mathcal{M}^\prime (G)}) \right) $  is equal to 
\begin{align*}
\sum_{g\in G} \sum _{(T^g, \alpha^g) \leq (L,\psi)} \frac{|L|}{|L|} \sum_{(L,\psi)\leq (K,\lambda)}  \mu ((L,\psi), (K,\lambda ))F(Res^{^gK}_T)(^g(a_{K,\lambda}))
\end{align*}
by Möbius inversion [Proposition 2 of the section  3 of  \cite{MobiusGCRota} this sum collapses to   $^g a_{(T^g, \alpha^g)}=a_{(T,\alpha)}$. 
Thus  $(T,\alpha)$-component of   $m_{F,G} \left( n_{F,G}((a_{(K,\lambda)})_{(K,\lambda)\in \mathcal{M}^\prime (G)}) \right)$ is $|G| a_{(T,\alpha)}$.
\end{proof}

\begin{cor}
Let $\mathcal{(G,S)}$, $\mathcal{D}$, $F$ and $G$ be as in Definition \ref{n_F,G}. If $|G|$ is invertible in $R$, then  $m_{F,G}$ and  $|G|^{-1} n_{F,G}$ are mutually inverse isomorphism. 
\end{cor}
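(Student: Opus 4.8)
The plan is to read the corollary off the preceding Proposition, which already provides the two crucial identities $n_{F,G}\circ m_{F,G}=|G|\cdot \mathrm{Id}_{F_+(G)}$ and $m_{F,G}\circ n_{F,G}=|G|\cdot \mathrm{Id}_{F^+(G)}$; nothing further about the internal structure of the two maps is needed. First I would invoke the hypothesis that $|G|$ is invertible in $R$, so that the scalar $|G|^{-1}\in R$ exists, and recall that both $m_{F,G}$ and $n_{F,G}$ are $R$-linear. Consequently $|G|^{-1}n_{F,G}$ is again an $R$-linear map $F^+(G)\to F_+(G)$, and scalar multiplication by $|G|^{-1}$ commutes with composition of $R$-linear maps.

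With this in place the computation is immediate, namely
\begin{align*}
(|G|^{-1}n_{F,G})\circ m_{F,G} &= |G|^{-1}\bigl(n_{F,G}\circ m_{F,G}\bigr)=|G|^{-1}\bigl(|G|\cdot \mathrm{Id}_{F_+(G)}\bigr)=\mathrm{Id}_{F_+(G)},\\
m_{F,G}\circ(|G|^{-1}n_{F,G}) &= |G|^{-1}\bigl(m_{F,G}\circ n_{F,G}\bigr)=|G|^{-1}\bigl(|G|\cdot \mathrm{Id}_{F^+(G)}\bigr)=\mathrm{Id}_{F^+(G)}.
\end{align*}
Thus $|G|^{-1}n_{F,G}$ is a two-sided inverse of $m_{F,G}$, so $m_{F,G}$ is an isomorphism of $R$-modules with inverse $|G|^{-1}n_{F,G}$, which is exactly the assertion that they are mutually inverse isomorphisms.

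Since the argument is a purely formal consequence of the Proposition, there is no genuine obstacle here; the only points deserving a word are that $|G|^{-1}$ exists (guaranteed by the invertibility hypothesis) and that rescaling a two-sided identity by the inverse scalar yields the identity in the $R$-linear setting, both of which are routine. The real content has already been discharged in proving the Proposition, in particular in the Möbius-inversion step for the second identity.
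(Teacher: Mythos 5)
Your proof is correct and matches the paper's intent exactly: the corollary is stated immediately after the Proposition giving $n_{F,G}\circ m_{F,G}=|G|\cdot\mathrm{Id}$ and $m_{F,G}\circ n_{F,G}=|G|\cdot\mathrm{Id}$, and the paper treats it as the same formal consequence (rescaling by $|G|^{-1}$ using $R$-linearity) that you spell out. No gaps.
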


\begin{cor}
Let $\mathcal{(G,S)}$, $\mathcal{D}$, $F$ and  $G$ be as in Definition  \ref{n_F,G}. If  $F_+(G)$ has trivial $|G|$-torsion, then $m_{F,G}$ is injective.
\end{cor}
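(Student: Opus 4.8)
The plan is to read off the result directly from the preceding proposition, which established the two composition identities $n_{F,G} \circ m_{F,G} = |G| \cdot Id_{F_+(G)}$ and $m_{F,G} \circ n_{F,G} = |G| \cdot Id_{F^+(G)}$. Only the first of these is needed here, and the injectivity of $m_{F,G}$ will follow from it together with the torsion hypothesis in essentially one line, so I expect no genuine obstacle.

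First I would unwind the hypothesis: saying that $F_+(G)$ has trivial $|G|$-torsion means precisely that the only element $y \in F_+(G)$ satisfying $|G| \cdot y = 0$ is $y = 0$; equivalently, multiplication by $|G|$ is an injective endomorphism of the $R$-module $F_+(G)$. This is the only point where care is required, since the phrase must be interpreted as a statement about the integer $|G|$ acting on the abelian group underlying $F_+(G)$.

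Next I would take an arbitrary $y \in \ker(m_{F,G})$, so that $m_{F,G}(y) = 0$. Applying $n_{F,G}$ to this equality and invoking the composition identity from the previous proposition gives
\begin{align*}
|G| \cdot y = n_{F,G}\bigl(m_{F,G}(y)\bigr) = n_{F,G}(0) = 0.
\end{align*}
By the triviality of the $|G|$-torsion, this forces $y = 0$. Hence $\ker(m_{F,G}) = 0$ and $m_{F,G}$ is injective, which completes the argument. The entire proof is a formal consequence of the factorization $n_{F,G} \circ m_{F,G} = |G| \cdot Id_{F_+(G)}$, so the main work has already been carried out in establishing that identity; this corollary is simply its torsion-free specialization, parallel to the preceding corollary where invertibility of $|G|$ upgraded the same identity to a full isomorphism.
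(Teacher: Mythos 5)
Your argument is correct and is exactly the intended one: the paper leaves this corollary without a written proof precisely because it is the immediate consequence of the identity $n_{F,G} \circ m_{F,G} = |G| \cdot Id_{F_+(G)}$ from the preceding proposition, combined with the assumption that multiplication by $|G|$ is injective on $F_+(G)$. Your unwinding of the torsion hypothesis and the one-line kernel computation match the paper's implicit reasoning, so there is nothing to add.
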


\section{ Green Functors  } \label{sect Green funct}
This section is devoted to studying   of the functors lower and upper plus. Throughout this section, the class $\mathcal{G}$ is closed under the direct product; we will denote the category $\mathcal{D:=C} ^A\mathcal{(G,S})$ and consider the subcategory of biconjunctive functors, $A$-fibered over $\mathcal{D}$, denoted  by $\mathcal{F}^{A}_{\mathcal{D},R}$,  which  consists of functors of Green biconjunctive $A$-fibered (Definition \ref{Green-fiberd}).  For  $\psi :G \longrightarrow H$ an  isomorphism of groups, we set  
\begin{align*}
Iso(\psi)^A :=\left[ \dfrac{H\times G}{^{(\psi,1 )} \Delta(G), 1}\right].
\end{align*}
\begin{defi}\label{Green-fiberd}
Let $F \in \mathcal{F}_{\mathcal{D},R}^{A}$,  $F$  is a Green $A$-fibered biset functor if it is an $A$-fibered  biset functor quipped with bilinear products
\begin{align*}
\times : F(G)& \times F(H) \longrightarrow F(G\times H)\\
(c,d) &\longmapsto (c\times d)
\end{align*}
For any  $G$, $H \in \mathcal{G}$, and an element   $\mathcal{E}_F \in F(1)$, satisfying the following conditions:
\begin{itemize}
\item  Associativity. Let $G$, $H$ and  $K$ finite groups in  $\mathcal{G}$, and let  $\alpha:G \times(H\times K) \longrightarrow (G\times H) \times K$  the canonical isomorphism, then for any $x\in F(G)$, $y\in F(H)$ and $z\in F(K)$
\begin{align*}
(x\times y) \times z= F(Iso(\alpha)^A)(x\times (y \times z)).
\end{align*}
\item Identity element. Let $G$ be a finite group of $ \mathcal{G}$. Let  $\rho_G : 1\times G \longrightarrow G$ and $\lambda_G: G \times 1 \longrightarrow G$ denote the canonical isomorphisms. Then for any   $x \in F(G)$
\begin{align*}
x=F(Iso(\rho_G)^A)(\mathcal{E}_F \times x)= F(Iso(\lambda_G)) (x\times \mathcal{E}_F).
\end{align*}
\item Functoriality: If  $\phi : G \longrightarrow H $ and  $\psi: G^\prime \longrightarrow H^\prime$ morphisms in  $R\mathcal{D}$, then for any  $x\in F(G)$ and  $y\in F(H)$
\begin{align*}
F(\phi \otimes_A \psi) (x\times y)= F(\phi)(x)\times F(\psi)(y).
\end{align*}
\end{itemize}
Let $F_1, F_2 \in \mathcal{F}_{\mathcal{D},R}^{A}$ be Green $A$-fibered biset functors on $\mathcal{D}$ over $R$. A morphism of Green $A$-fibered biset functors between $F_1$ and $F_2$ is a morphism $\eta : F_1 \longrightarrow F_2$ in the category $\mathcal{F}_{D,R}^A $ with
the additional property that, 
\begin{equation*}
\xymatrix{
F_1(G)\times F_1(H)\ar[r]_{   \eta_G\times \eta_H } \ar[d]_{\times } &  F_2(G)\times F_2(H) \ar[d]^{\times }\\
F_1(G\times H)\ar[r]_{\eta_{G\times H}} & F_2(G\times H) 
}
\end{equation*}
For all $G$, $H$ in $\mathcal{G}$, and $\eta_1(\mathcal{E}_{F_1})=\mathcal{E}_{F_2}$.
\end{defi}
We denote the category of Green $A$-fibered biset functors on 
 $\mathcal{D}$ over $R$ by $\mathcal{F}_{D,R}^{A,\mu}$.
\begin{rem}
Let $G$ and $H$ finite  groups. For any $A$-fibered  $G$-set   $X $ and for any $A$-fibered $H$-set $Y$, one has $(G\times H)_{x\otimes y} = G_x \times H_y$,  for all $x\otimes y \in X\otimes_A Y$. 
\end{rem}

The followings lemmas  shows that the constructions  $F_+$ and  $F^+$ preserve the property of being a Green fibered biset functors.
\begin{lem}\label{green_+}
Let  $\mathcal{(G,S)}$ satisfy Axioms $(i)$- $(iii)$, $(iv)$ and $(vi)$ in  \ref{axioms}, and set  $\mathcal{D}= \mathcal{C}^A \mathcal{(G,S)}$,  $\mathcal{D_+=C}^A \mathcal{(G,S_+)}$. For any  $F\in \mathcal{F}_{\mathcal{D},R}^{A,\mu}$,  then $F_+ \in \mathcal{F}_{\mathcal{D}_+, R}^{A,\mu}$. 
\end{lem}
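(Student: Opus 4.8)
The plan is to endow each $F_+(G)$ with the external product inherited from the Green structure of $F$, to exhibit a unit, and to verify the three axioms of Definition \ref{Green-fiberd}. For objects $(X,s)\in\Gamma_F(G)$ and $(Y,t)\in\Gamma_F(H)$ I would set
\[
[X,s]_G\times[Y,t]_H:=[\,X\otimes_A Y,\ s\times t\,]_{G\times H},
\qquad (s\times t)(x\otimes y):=s(x)\times t(y),
\]
where the product on the right is the external product of $F$. This is legitimate because the remark preceding the lemma gives $(G\times H)_{x\otimes y}=G_x\times H_y$, and since $\mathcal{G}$ is closed under direct products and $G_x,H_y\in\mathcal{G}$ we have $G_x\times H_y\in\mathcal{G}$, so that $s(x)\times t(y)\in F(G_x\times H_y)=F((G\times H)_{x\otimes y})$ and $(X\otimes_A Y,s\times t)\in\Gamma_F(G\times H)$. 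For the unit I would take $\mathcal{E}_{F_+}:=[1,1,\mathcal{E}_F]_1\in F_+(1)$. On the standard generators the product then reads
\[
[H_1,\phi_1,x]_{G}\times[H_2,\phi_2,y]_{G'}=[\,H_1\times H_2,\ \phi_1\cdot\phi_2,\ x\times y\,]_{G\times G'},
\]
with $(\phi_1\cdot\phi_2)(h_1,h_2)=\phi_1(h_1)\phi_2(h_2)$ and $x\times y\in F(H_1\times H_2)$.

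First I would check well-definedness. The section $s\times t$ is constant on $A$-orbits because $s$ and $t$ are, and it is $(G\times H,A)$-invariant by combining the $(G,A)$- and $(H,A)$-invariance of $s$ and $t$ with the functoriality axiom of $F$, using that the conjugation isomorphism for $(g,h)$ factors as $C_g\times C_h$. The assignment respects the defining relations \eqref{cociente}: distributivity of $\otimes_A$ over disjoint unions handles the coproduct relation, while bilinearity of the external product of $F$ handles the pointwise-sum relation. Independence of the chosen representatives of the isomorphism classes is checked by transporting everything along $\alpha\otimes_A\beta$, exactly as in the well-definedness argument already given for the internal product of $F_+$.

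Next I would dispatch the associativity and unit axioms. Associativity reduces, through the canonical isomorphism $(X\otimes_A Y)\otimes_A Z\cong X\otimes_A(Y\otimes_A Z)$ of $A$-fibered sets and the identity $(G\times H\times K)_{x\otimes y\otimes z}=G_x\times H_y\times K_z$, to the associativity of the external product of $F$ applied entrywise; applying $F(Iso(\alpha)^A)$ then matches the two sides. For the unit, the isomorphism $1\otimes_A X\cong X$ of $A$-fibered $G$-sets identifies the single orbit contributing to $\mathcal{E}_{F_+}\times[X,s]_G$, whose section value is $\mathcal{E}_F\times s(x)\in F(1\times G_x)$, and the unit property of $\mathcal{E}_F$ together with $F(Iso(\rho_G)^A)$ returns $[X,s]_G$; the right unit is symmetric.

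The functoriality axiom is where I expect the main obstacle. By $R$-bilinearity and additivity it suffices to take $\phi=[U]$ and $\psi=[V]$ for transitive bisets $U$ (an $(H,G)$-biset) and $V$ (an $(H',G')$-biset) with stabilizer pairs in $\mathcal{S}_+$, and to evaluate on generators $[X,s]_G$, $[Y,t]_{G'}$. The crux is the \emph{interchange isomorphism} of $A$-fibered $(H\times H')$-sets
\[
(U\otimes_A V)\otimes_{A(G\times G')}(X\otimes_A Y)\ \cong\ (U\otimes_{AG}X)\otimes_A(V\otimes_{AG'}Y),\qquad (u\otimes v)\otimes(x\otimes y)\longmapsto(u\otimes x)\otimes(v\otimes y),
\]
together with the claim that the sections $(U\otimes_A V)(s\times t)$ and $U(s)\times V(t)$ correspond under it. Establishing this correspondence is the real work: one must verify that the stabilizing pair of $(u\otimes v)\otimes(x\otimes y)$ decomposes, after the shuffle isomorphism $(H\times H')\times(G\times G')\cong(H\times G)\times(H'\times G')$, as the external product of $((H\times G_x)_u,\phi_{u,x})$ and $((H'\times G'_y)_v,\phi_{v,y})$, which follows from Lemma \ref{estProd}, the product-stabilizer identities $(G\times G')_{x\otimes y}=G_x\times G'_y$ and $(H\times H')_{(u\otimes v)\otimes(x\otimes y)}=H_{u\otimes x}\times H'_{v\otimes y}$, and the factorization $\phi_{u\otimes v}=\phi_u\times\phi_v$. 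Once this is in place, the functoriality of the external product of $F$ rewrites $U(s)(u\otimes x)\times V(t)(v\otimes y)$ as the value of $F$ on the external tensor product of the two bisets applied to $s(x)\times t(y)=(s\times t)(x\otimes y)$, which matches $(U\otimes_A V)(s\times t)$ and establishes the axiom. Throughout, the standing hypothesis that $\mathcal{G}$ is closed under direct products, together with Axioms $(iv)$ and $(vi)$, is used to guarantee that every biset occurring has stabilizer pairs in $\mathcal{S}$, so that $F$ and its functoriality may legitimately be applied; with associativity, the unit, functoriality and $R$-bilinearity all verified, this shows $F_+\in\mathcal{F}^{A,\mu}_{\mathcal{D}_+,R}$.
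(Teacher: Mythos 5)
Your proposal is correct and follows essentially the same route as the paper: the same external product $[X,s]_G\times[Y,t]_H=[X\otimes_A Y,\,s\times t]$ with $(s\times t)(x\otimes y)=s(x)\times t(y)$, the same unit $[1,s_{\mathcal{E}_F}]$, associativity and the unit axiom reduced to those of $F$ via the canonical isomorphisms of $A$-fibered sets, and functoriality established through the interchange isomorphism $(U\otimes_A V)\otimes(X\otimes_A Y)\cong(U\otimes X)\otimes_A(V\otimes Y)$ together with the decomposition of stabilizing pairs. Your treatment is if anything slightly more explicit than the paper's about well-definedness with respect to the relations defining $F_+(G)$.
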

\begin{proof}
We define the bilinear product of  $F_+$.  Let  $G$, $H$ be elements of  $\mathcal{G}$,  we define
\begin{align*}
\times: F_+ (G) \times F_+(H) &\longrightarrow  F_+(G\times H)\\
([X,s], [Y, t]) &\longmapsto  [X\otimes_A Y, s\times t],
\end{align*}
where  
\begin{align*}
s \times t : X\otimes_A Y &\longrightarrow \coprod_{x\otimes y \in [X\otimes_A Y/A]} F((G\times H)_{x\otimes y})\\
x\otimes y &\longmapsto s(x)\times t(y).
\end{align*} 
Since  $s$ and  $t$ are section  $A$-equivariant, one has  $s \times t$ is well-define, 
and set  $\mathcal{E}_{F_+}=[1, s_{\mathcal{E}_F}] \in F_+(1)$. Now we prove that, this bilinear product and $\mathcal{E}_{F_+}$ satisfies  the properties stated in the Definition \ref{Green-fiberd}: \\
\textbf{Associativity}: Let $G$, $H$, $K$ be elements of  $\mathcal{G}$  and  let  $$\alpha : G \times(H\times K) \longrightarrow (G\times H) \times K$$  be the canonical isomorphism. For any $[X,s]\in F_+(G)$, $[Y,t] \in F_+(H)$ and  $[Z,r]\in F_+(K)$, one ha\begin{align*}
F_+(Iso (\alpha)^A)\left( [X,s]_G\times ([Y,t]_H\times [Z,r]_K )\right)
\end{align*}
is equal  to 
\begin{align*}
[Iso (\alpha)^A \otimes_{G\times (H \times K)} X\otimes_A (Y\otimes_A Z), Iso(\alpha)^A (s\times (t\times r))]_{(G \times H)\times K}.
\end{align*}\label{iso_nat_GKH}
Note that there exists an isomorphism of $A$-fibered bisets between
\begin{equation}
Iso (\alpha)^A \otimes_{(G\times H) \times K} X\otimes_A (Y \otimes_A Z ) \cong (X \otimes_A Y) \otimes_A Z.
\end{equation}
since $Iso(\alpha)^A$ is $G\times (H\times K)$-transitive, one has  every element of  $(G\times H)\times K$-set \-$Iso (\alpha)^A \otimes_{(G\times H) \times K} (X\otimes_A Y) \otimes_A Z $  we can see as $1\otimes_{A((G\times H)\times K)} (x\otimes (y\otimes z))$, for some  $x\in X$, $y \in Y$ and  $z\in Z$. then 
\begin{align*}
Iso(\alpha)^A (s\times (t\times r))(1\otimes(x\otimes (y \otimes z))) 
\end{align*}
is equal to  
\begin{align*}
 F\left(\left[ \dfrac{((G\times H)\times K)_{x\otimes (y\otimes z)} \times (G\times (H\times K))_{(x\otimes y )\otimes z}}{^{(\alpha,1)} \Delta(G\times (H\times K))_{(x\otimes y )\otimes z}), 1  }\right]  \right) (s(x)\times (t(y)\times r(z))),
\end{align*}
by associativity of bilinear product  $\times$ of  $F$, this is equal to 
$$(s \times(t \times r))(x\otimes (y\otimes z)).$$
\textbf{Identity element}: Let  $[X,s] \in F_+(G)$, one has a natural isomorphism of $A$-fibered $G$-sets  between  
\begin{align*}
1\otimes_A X \cong X \cong X\otimes_A 1.
\end{align*}
Let  $\lambda_G : 1\times G \longrightarrow G$ and  $\rho_G: G \times 1 \longrightarrow G$ be canonical isomorphism.  One has, 
\begin{align*}
(Iso(\lambda_G)^A (s_{\mathcal{E}_F} \times s))(1 \otimes x)& = F\left(\left[ \frac{G\times (1\times G)}{^{(\lambda_G,1)} \Delta(1\times G) ,1 } \right] \right) ((s_{\mathcal{E}_F} \times s)(1\times x))\\
&= F\left(\left[ \frac{G\times (1\times G)}{^{(\lambda_G,1)} \Delta(1\times G) ,1 } \right]  \right)(\mathcal{E}_F \times s(x))\\
&=s(x),
\end{align*}
then 
\begin{align*}
F_+ (Iso(\lambda_G )^A)(\mathcal{E}_{F_+} \times [X,s]_G) =[X,s]_G.
\end{align*}
We can prove 
 \begin{align*}
F_+ (Iso(\rho_G )^A)( [X,s]_G\times \mathcal{E}_{F_+} ) =[X,s]_G
\end{align*}
 in the same way.  This  $\mathcal{E}_{F_+}$  is the identity element.\\
\textbf{  Functoriality }: Let $U$ be an $A$-fibered $(G^\prime, G)$ -biset and let  $V$ be an $A$-fibered  $(H^\prime, H)$-biset such that all stabilizing pairs  are elements of  $\mathcal{S}_+$.  For  $[X,s]\in F_+(G)$ and  $[Y,t]\in F_+(H)$, one has 
\begin{align*}
 F_+(U \otimes_A V)( [X,s]_G\times [Y,t]_H)&=F_+(U\otimes_A V)([X\otimes_A Y, s\times t]_{G\times H} )\\
 &=[(U\otimes_A V)\otimes_{AGH} (X\otimes_A Y), (U\otimes_A V)(s\times t)]_{G^\prime \times H^\prime}.
 \end{align*}
On the other hand 
 \begin{align*}
 F_+(U) ([X,t]_G) \times F_+(V)([Y,t]_H)&=[U\otimes_{AG} X, U(s)] _{G^\prime} \times [V\otimes_{AH} Y, V(t)]_{H^\prime}\\
 &=[(U\otimes_{AG} X) \otimes_A (V\otimes_{AH} Y), U(s)\times V(t)]_{G^\prime \times H^\prime}.
 \end{align*}
Note that, the map  
 \begin{align*}
 (U\otimes_A V)\otimes_{AGH} (X\otimes_A Y)& \longrightarrow (U\otimes_{AG} X) \otimes_A (V\otimes_{AH} Y)\\
 (u\otimes v)\otimes_{AGH} (x\otimes y) &\longmapsto (u\otimes_{AG} x) \otimes (v\otimes_{AH} y)
 \end{align*}
 is an isomorphism of $A$-fibered  $(G^\prime, H^\prime)$-biset.  Now, we will prove that 
\begin{align*}
U(s)\times V(t)((u\otimes_{AH} x)\otimes (v\otimes_{AH} y)) =U(s)(u\otimes_{AH} x) \times V(t)(v\otimes_{AH} y). 
\end{align*}
On the one hand, $U(s)(u\otimes_{AH} x) \times V(t)(v\otimes_{AH} y) $  is equal to  
\begin{align*}
F\left( \left[ \dfrac{G^\prime_ {u\otimes_{AG} x} \times G_x}{ (G^\prime\times G_x)_u, \phi_u} \right] \right) (s(x)) \times F\left(\left[\dfrac{H^\prime_{v\otimes_{AH} y} \times H_y}{(H^\prime \times H_y)_v, \phi_v } \right]  \right) (t(y)). 
\end{align*}
On the other hand
\begin{align*}
U\otimes_A V(s\times t)((u\otimes v)\otimes_{AGH} (x\otimes y)) 
\end{align*}
is equal to 
\begin{align*}
F\left( \left[ \dfrac{(G^\prime\times H^\prime)_{(u\otimes v)\otimes_{AGH} (x\otimes y)} \times (G\times H)_{x\otimes y}}{((G^\prime \times H^\prime) \times (G^\prime \times H^\prime)_{x\otimes y})_{u\otimes v}, \phi_{u\otimes v }}\right]  \right) (s(x)\times t(y) ).
\end{align*}
since 
\begin{align*}
\left[ \dfrac{G^\prime_ {u\otimes_{AG} x} \times G_x}{ (G^\prime\times G_x)_u, \phi_u} \right] \otimes_A \left[\dfrac{H^\prime_{v\otimes_{AH} y} \times H_y}{(H^\prime \times H_y)_v, \phi_v } \right]
\end{align*}
is isomorphic to 
 \begin{align*}
 \left[ \dfrac{(G^\prime\times H^\prime)_{(u\otimes v)\otimes_{AGH} (x\otimes y)} \times (G\times H)_{x\otimes y}}{((G^\prime \times H^\prime) \times (G^\prime \times H^\prime)_{x\otimes y})_{u\otimes v}, \phi_{u\otimes v }}\right],
 \end{align*}
 by   functoriality of $F$, one has  functoriality of  $F_+$. 

\end{proof}
Let $G$ and $H$ be finite groups, and let $D\leq G\times H$, we set 
\begin{align*}
p(D)=p_1(D) \times p_2(D).
\end{align*}
Now we use this notation for the next lemma.
\begin{lem}\label{green^+}
Let $\mathcal{(G,S)}$   satisfy Axioms  $(i)$-$(iii)$,  $(vii)$ and  $(viii)$ in \ref{axioms}, and  the condition $k_2$, set   $\mathcal{D= C}^A(\mathcal{G,S})$,  $\mathcal{D^+=C }^A\mathcal{(G,S^+})$.  If  $F\in \mathcal{F}_{\mathcal{D},R}^{A,\mu}$,  then  $F^+ \in \mathcal{F}_{\mathcal{D}^+, R}^{A,\mu}$.
\end{lem}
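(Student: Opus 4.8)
The plan is to give $F^+$ a Green structure defined entrywise, so that it is the counterpart under the mark morphism of the product built on $F_+$ in Lemma~\ref{green_+}. Since $\mathcal{G}$ is closed under direct products, for $G,H\in\mathcal{G}$, $a=(a_{(K,\lambda)})\in F^+(G)$ and $b=(b_{(L,\mu)})\in F^+(H)$ I would define $\times\colon F^+(G)\times F^+(H)\to F^+(G\times H)$ componentwise: for $(M,\nu)\in\mathcal{M}^\prime(G\times H)$, writing $P:=p_1(M)$ and $Q:=p_2(M)$ (which lie in $\mathcal{G}$ by the standing closure hypotheses), set
\[
(a\times b)_{(M,\nu)}:=\sum_{\substack{\kappa\in P^\ast,\ \ell\in Q^\ast\\ (\kappa\times\ell)\vert_{M}=\nu}} F\left(Res^{P\times Q}_{M}\right)\left(a_{(P,\kappa)}\times b_{(Q,\ell)}\right),
\]
the inner $\times$ being the external product of $F$. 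As $a$ and $b$ have finite support in their respective direct sums, only finitely many summands are nonzero, so the expression is well defined; for the unit I take $\mathcal{E}_{F^+}:=\mathcal{E}_F$ under the identification $F^+(1)=F(1)$ coming from $\mathcal{M}^\prime(1)=\{(1,1)\}$. The first verification is that $a\times b$ again lies in the conjugation-invariant submodule $F^+(G\times H)$: for $(g,h)\in G\times H$ one has ${}^gP=p_1({}^{(g,h)}M)$ and ${}^hQ=p_2({}^{(g,h)}M)$, and since $a,b$ are conjugation invariant and the external product of $F$ commutes with the conjugation isomorphisms $Iso(\,\cdot\,)^A$ by functoriality, reindexing the sum yields ${}^{(g,h)}(a\times b)_{(M,\nu)}=(a\times b)_{({}^{(g,h)}M,\,{}^{(g,h)}\nu)}$.

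Next I would check associativity and the identity axiom of Definition~\ref{Green-fiberd}, both of which reduce entry by entry to the corresponding axioms for $F$. Associativity follows from that of $\times$ on $F$ together with the transitivity of restriction and the functoriality of $\times$ under the canonical isomorphisms $Iso(\alpha)^A$; the identity axiom follows by applying $x=F(Iso(\rho_G)^A)(\mathcal{E}_F\times x)$ in each component, using that for a subgroup of $1\times G$ the second projection recovers the group. These are routine once the entrywise definition is fixed.

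The principal obstacle is functoriality: for an $A$-fibered $(G^\prime,G)$-biset $U$ and an $A$-fibered $(H^\prime,H)$-biset $V$ whose stabilizing pairs all lie in $\mathcal{S}^+$, I must establish
\[
F^+(U\otimes_A V)(a\times b)=F^+(U)(a)\times F^+(V)(b)
\]
for $a\in F^+(G)$, $b\in F^+(H)$. I would expand both sides at a fixed $(M,\nu)\in\mathcal{M}^\prime(G^\prime\times H^\prime)$ via the summation formula of Definition~\ref{F^+}, in the spirit of the proof of Theorem~\ref{F^+ conjuga}. The right-hand side yields a double sum over representatives $u$ of $U$ and $v$ of $V$ of the external product of two transitive bisets; applying Lemma~\ref{estProd} to write the stabilizing pair of $u\otimes v$ via the $\ast$-product $\phi_u\ast\phi_v$, Lemma~\ref{L^u} to identify $M^{u\otimes v}$ and the condition $M\leq p_1((G^\prime\times H^\prime)_{u\otimes v})$, and the Mackey formula (Theorem~\ref{mackey}) to collapse the composite of $Res^{P\times Q}_M$ with the two transitive classes into a single class, I expect both sides to reduce to the same sum indexed by $u\otimes v\in[A\backslash(U\otimes_A V)/(G\times H)]$. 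The delicate part is matching the character conditions: the external product carries $(\kappa\times\ell)\vert_M=\nu$, while $F^+$ of a biset carries the multiplicative conditions $\phi_u\ast\kappa=\cdots$ as well as the restriction conditions $\phi_{u,2}\vert_{W_{u,\cdot}}=\kappa\vert_{W_{u,\cdot}}$. This is exactly where condition $k_2$ is used: it makes $k_2$ of every stabilizing pair trivial, so each $W_{u,\cdot}$ is trivial and the restriction conditions disappear, while right freeness of the bisets (also guaranteed by $k_2$) ensures that products of orbit representatives are again orbit representatives; what remains then matches $(\kappa\times\ell)\vert_M=\nu$ after substituting $\phi_{u\otimes v}=\phi_u\ast\phi_v$.

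Finally I would note that a morphism $\eta\colon F_1\to F_2$ of Green $A$-fibered biset functors over $\mathcal{D}$ induces $\eta^+\colon F_1^+\to F_2^+$ that respects $\times$ and sends $\mathcal{E}_{F_1^+}$ to $\mathcal{E}_{F_2^+}$, which is immediate from the entrywise definitions and the fact that $\eta$ commutes with $Res$ and $\times$. Together with the first three steps this gives $F^+\in\mathcal{F}^{A,\mu}_{\mathcal{D}^+,R}$, the functoriality identity being the only genuinely involved computation.
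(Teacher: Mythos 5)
Your proposal follows essentially the same route as the paper: the product on $F^+$ is defined entrywise by applying $F\bigl(Res^{p_1(T)\times p_2(T)}_{T}\bigr)$ to the external product of the relevant components of $a$ and $b$, the unit is $\mathcal{E}_F$ under the identification $F^+(1)=F(1)$, and associativity, the unit axiom, and functoriality are checked component by component, with functoriality handled exactly as you outline (expansion via Definition \ref{F^+}, Lemma \ref{estProd}, Lemma \ref{L^u}, the Mackey formula of Theorem \ref{mackey}, condition $k_2$ to trivialize the $W_{u,K}$-conditions, and right-freeness to match orbit representatives, as in the proof of Theorem \ref{F^+ conjuga}). The one substantive difference is in the definition itself: the paper sets the $(T,\alpha)$-entry equal to the single term $F(Res^{p(T)}_T)\bigl(a_{(p_1(T),\lambda)}\times b_{(p_2(T),\beta)}\bigr)$ for \emph{a} factorization $\alpha=\lambda\times\beta$ (and $0$ if none exists), whereas you sum over \emph{all} pairs $(\kappa,\ell)$ with $(\kappa\times\ell)\vert_{T}=\nu$. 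Since such a factorization need not be unique when $T$ is a proper subgroup of $p_1(T)\times p_2(T)$ (already for $T=\Delta(C_2)\leq C_2\times C_2$ the trivial character has two factorizations), your summed version is the one under which the mark morphism is multiplicative and the functoriality bookkeeping closes up cleanly; so this is a point where your formulation is the more careful one rather than a genuinely different method.
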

\begin{proof}
We define the bilinear product of   $F ^+$.  Let $G$, $H$  be elements of  $\mathcal{G}$,  define bilinear product 
\begin{align*}
\times : F^+(G) \times F^+(H) &\longrightarrow F^+(G\times H)\\
((a_{(L,\lambda)})_{(L,\lambda) \in \mathcal{M}^\prime (G)}, (b_{(K,\beta)})_{(K,\beta) \in \mathcal{M}^\prime (H)}) &\longrightarrow ( c_{(T,\alpha)})_{( T,  \alpha) \in \mathcal{M}^\prime (G\times H)}
\end{align*}
where 
\begin{align*}
  c_{(T,\alpha)} = 
\begin{cases}
 F(Res^{p(T)}_T)( a_{(p_1(T), \lambda)} \times b_{(p_2(T),\beta)}) &\text{ if } \alpha = \lambda \times \beta\\
0 &\text{     otherwise }
\end{cases}
\end{align*}
 and identity element is  $\mathcal{E}_F \in F^+(1)$. Now we will prove that this bilinear product and this identity element  satisfies  the Definition \ref{Green-fiberd}.\\
\textbf{Associativity}:  Let  $G$, $H$, $K$ be  elements of $\mathcal{G}$ and  let  $$\Phi :(G\times H) \times K \longrightarrow  G \times(H\times K)$$ be canonical isomorphism,  for any  $ x=(x_{(L,\lambda)} )_{(L, \lambda) \in \mathcal{M}^\prime (G)} \in F^+(G)$, $y=(y_{(I,\gamma)} )_{(I, \gamma) \in \mathcal{M}^\prime (H)} \in F^+(H)$, and  $z=(z_{(J,\beta)} )_{(J, \beta) \in \mathcal{M}^\prime (K)} \in F^+(K)$, we  set
\begin{align*}
(b_{(T,\tau)} )_{(T, \tau) \in \mathcal{M}^\prime ( H \times K)}= y \times z
\end{align*}
with  
\begin{align*}
b_{(T,\tau)}=  F(Res^{ p(T)}_{T} )(y_{(p_1(T), \gamma) } \times z_{(p_2(T), \beta)}) 
\end{align*}
if   $\tau = \gamma \times \beta$ for some  $\beta \in p_2(T)^\ast $,  $\gamma \in p_1(T)^\ast$ and  $0$ in the otherwise. Now,  we set 
\begin{align*}
a:=(a_{(Q,t) })_{(Q,t)\in \mathcal{M}(G\times (H\times K))}=x\times b
\end{align*}
with
\begin{align*}
a_{(Q,t)} &= F(Res^{ p(Q)}_Q)(x_{(p_1 (Q), \lambda )} \times b_{(p_2(Q), \tau)}) \ \ \ (\text{                                               with } t =\lambda\times \tau )\\
 &= F(Res^{ p(Q)}_Q)(x_{(p_1 (Q), \lambda )} ) \times F(Res^{p_{1,2}(Q) \times p_{2,2}(Q)}_{p_2(Q) })(y_{(p_{1,2}(Q), \gamma)} \times z_{(p_{1,2}(Q)),\beta)} )
\end{align*}
this is equal to 
 \begin{align*}
 &F(Res^{  p(Q)}_Q \circ Res^{p_1(Q) \times ( p_{1,2}(Q) \times p_{2,2}(Q))}_{p_1(Q) \times p_2(Q)} )( x_{(p_1 (Q), \lambda) } \times (y_{(p_{1,2}(Q), \gamma)} \times z_{(p_{1,2}(Q)),\beta)}) )\\
 &= F( Res^{p_1(Q) \times ( p_{1,2}(Q)) \times p_{2,2}(Q))}_{Q} )( x_{(p_1 (Q), \lambda) } \times (y_{(p_{1,2}(Q), \gamma)} \times z_{(p_{1,2}(Q),\beta)}) )
 \end{align*}
with $t=\lambda \times (\gamma \times \beta)$, and where $p_{1,2} (Q)=p_1(p_2(Q)) \leq H$ and $p_{2,2}(Q)=p_2(p_2(Q)) \leq K$. We set  $$c=c(c_{(\overline{Q}, \overline{t})})_{(\overline{Q}, \overline{t})\in \mathcal{M} ((G\times H)\times K)} := F^+(Iso(\Phi)^A)(x\times (y\times z))$$
where  $\overline{Q} = \Phi (Q) $ and  $\overline{t} = t \circ \Phi$  for $(Q,t)\in \mathcal{M}(G\times H) \times K)$. 
One has,   $ c_{\overline{Q}, \overline{t}}$ is equal to 
\begin{align*}
F(Iso (\Phi)^A \circ Res^{p_1(Q) \times ( p_{1,2}(Q) \times p_{2,2}(Q))}_{Q} )( x_{(p_1 (Q), \lambda) } \times (y_{(p_{1,2}(Q), \gamma)} \times z_{(p_{1,2}(Q),\beta)}) ),
\end{align*}
 By  associativity  of  bilinear product of  $F$,  this is equal to $(\overline{Q}, \overline{t})$-coordinate  of  $(x\times y) \times z$.\\
\textbf{Identity element}:  Let $\lambda_G : 1\times G \longrightarrow G$ and $\rho_G: G \times 1 \longrightarrow G$ be canonical isomorphic. For  $y=(y_{(I,\beta)})_{(I,\beta) \in \mathcal{M}^\prime (G)} \in F^+(G)$, we set 
\begin{align*}
(b_{(K,\alpha)})_{(K,\alpha) \in \mathcal{M}^\prime ( G)} = F^+(Iso(\lambda_G) ^A) (\mathcal{E}_{F^+}\times y)
\end{align*}
where
\begin{align*}
b_{(K,\alpha )} =\sum_{\substack{ u \in [A\backslash Iso \lambda_G^A / 1\times  G]\\ K \leq p_1 ((G\times (1\times G))_u) \\ \alpha =1\cdot \beta}} F\left(\left[ \dfrac{K\times K^u}{(K\times (1\times G))_u ,\phi_u}\right]  \right) ( (\mathcal{E}_F \times y)_{(K^u, \beta)}).
\end{align*}
By Definition of  bilinear product $\times$ of $F^+$, one  has  $(\mathcal{E}_F \times y)_{(K^u, \beta)}\neq 0$ if  $K^u = 1\times I$ for some $I\in \sum_{\mathcal{G}} G$, it follows that  $K=I$,  then 
 \begin{align*}
 b_{(K,\alpha )} &= F\left(\left[ \frac{K\times (1\times K)}{^{(\lambda_G, 1)} \Delta(1\times K), 1}\right]   \right) (\mathcal{E}_F \times y_{(K, \alpha)})\\
 &= y_{(K,\alpha)}.
 \end{align*}
 In the same way, we can show that  $F^+(Iso \rho_G ^A)(y\times \mathcal{E}_{F^+})=y$.\\
 \textbf{Functoriality}:  Let $U$ be a $A$-fibered $(G^\prime, G)$ -biset  and let  $V$ be an $A$-fibered $(H^\prime, H)$-biset such that all stabilizing pairs are elements of   $\mathcal{S}^+$.  For  $x=(x_{(K,\alpha)})_{(K,\alpha) \in \mathcal{M}^\prime (G)} \in F^+(G)$ and   $y=(y_{(I,\beta)})_{(I,\beta) \in \mathcal{M}^\prime (H)} \in F^+(H)$, we set 
 \begin{align*}
 b=(b_{(L, \delta)})_{(L,\delta)\in \mathcal{M}^\prime (G^\prime \times H^\prime)}:=F^+ (U\otimes_A V) (x\times y)
 \end{align*}
where  $b_{(L,\delta)} $ is equal to 
\begin{align*}
 \sum_{\substack{u\otimes v \in [A\backslash (U\otimes V) / G\times H]\\ L\leq p_1((G^\prime \times H^\prime) \times (G\times H))_{u\otimes v} \\ \phi_{u\otimes v } \ast s=\delta }} F\left[\left( \frac{L  \times L^{u\otimes v}}{(L\times (G\times H))_{u\otimes v}, \phi_{u\otimes v}}\right)  \right] ((x\times y)_{(L^{u\otimes v},s)}).
\end{align*} 
By Definition of bilinear product of $F^+$, one has  $((x\times y)_{(L^{u\otimes v},s)})\neq 0 $ if only if there exists  $(p_1(L^{u\otimes v}),\lambda)\in \mathcal{M}^\prime(G)$ and $ (p_2(L^{u\otimes v}),\beta)\in \mathcal{M}^\prime (H) $ such that    $s=\lambda\times \beta$. Then  $b_{(L,\delta)} $ is equal to 
\begin{align*}
\sum F\left[\left( \frac{L  \times L^{u\otimes v}}{(L\times (G\times H))_{u\otimes v}, \phi_{u\otimes v}}\right) \circ Res^{p(L^{u\otimes v})}_{L^{u\otimes v}} \right] (x_{p_1(L^{u\otimes v}), \lambda}\times y_{p_2(L^{u\otimes v}),\beta}),
\end{align*} 
where the sum run over  $u\otimes v \in [A\backslash (U\otimes V) / G\times H]$,  $ L\leq p_1\left( ((G^\prime \times H^\prime) \times (G\times H))_{u\otimes v}\right) $ and   $\phi_{u\otimes v } \ast (\lambda \times \beta)=\delta.$
Moreover, by the action of  $U\otimes_A V$, we have $p_1(L^{u\otimes v})= p_1(L)^u$ and  $p_2(L^{u\otimes v})=p_2(L)^v.$\\
On the other hand,  we set  $$a:=(a_{(N, t)})_{(N,t)\in \mathcal{M}^\prime (G^\prime)}= F^+(U) (x)$$   and $$z:=(z_{(W,  s)})_{(W,s)\in \mathcal{M}^\prime (H^\prime)}:= F^+(U) (x),$$ then  $(L,\delta)$-coordinate  of $ F^+(U) (x)\times F^+(V) (y) $ is equal to  
\begin{align*}
F(Res^{p_1(L)\times p_2(L)}_ L) (a_{(p_1(L), t)} \times  z_{(p_2(L), s)})
\end{align*} 
with $\delta = t\times s$, where 
\begin{align*}
a_{(p_1(L), t)}= \sum_{\substack{u\in [A\backslash U / G]\\p_1(L) \leq p_1((G^\prime \times G)_u) \\ \phi_u \ast \kappa=\lambda }} F\left(\left[ \frac{p_1(L)\times p_1(L)^u}{(p_1(L) \times G)_u, \phi_u}\right]  \right) (x_{(p_1(L)^u, \lambda)})
\end{align*}
and 
\begin{align*}
z_{(p_2(L), s)} = \sum_{\substack{ v\in [A\backslash V / H]\\ p_2(L) \leq p_1((H^\prime \times H)_v)\\ \phi_v \ast \beta=\gamma}} F\left(\left[ \frac{p_2(L)\times p_2(L)^v}{(p_2(L) \times H)_v, \phi_v}\right]  \right) (y_{(p_2(L)^v, \gamma)}). 
\end{align*}
Now, note that
\begin{align*}
 \left[ \frac{p_1(L)\times p_1(L)^u}{(p_1(L) \times G)_u, \phi_u}\right]  \otimes  \left[ \frac{p_2(L)\times p_2(L)^v}{(p_2(L) \times H)_v, \phi_v}\right]
\end{align*}
is equal to 
\begin{align*}
 \left[ \frac{L  \times L^{u\otimes v}}{(L\times (G\times H))_{u\otimes v}, \phi_{u\otimes v}}\right] \circ Res^{p(L^{u\otimes v})}_{L^{u\otimes v}} 
\end{align*}
 By the proof of  Theorem  \ref{F^+ conjuga} and the functionality of  bilinear product $\times$  of  $F$, ones has the $(L,\delta)$-coordinate of $F^+(U) (x)\times F^+(V) (y)$ is equal to the $(L,\lambda)$-coordinate of $F^+(U\otimes _A V)(x\times y)$.
\end{proof}

\begin{thm}
Let $\mathcal{(G,S)}$ satisfy Axioms $(i)–(iii)$ in \ref{axioms} and set $\mathcal{ D := C}^A \mathcal{(G, S)}$. Further,
let $F\in \mathcal{F}_{\mathcal{D},R}^{A,\mu}$.

\begin{enumerate}[(a)]
\item Assume that $\mathcal{(G, S)}$ also satisfies Axioms $(iv)$ and $(vi)$ in \ref{axioms} and set  $\mathcal{D_+=C}^A \mathcal{(G,S_+)}$. Then, $F_+ \in \mathcal{F}_{\mathcal{D}_+, R}^{A,\mu}$ and one obtains a functor $-_+:  \mathcal{F}_{D R}^{A,\mu} \longrightarrow \mathcal{F}_{\mathcal{D}_+, R}^{A,\mu}$. 

\item
 Assume that $\mathcal{(G, S)}$  also satisfies Axiom $(vii)$ and $(viii)$ in \ref{axioms} and assume that for any $G$ and $H$ in $\mathcal{G}$,  the condition $k_2$ and set $\mathcal{D^+=C}^A \mathcal{(G,S^+})$. Then $F^+ \in \mathcal{F}_{\mathcal{D}^+, R}^{A,\mu}$
and one obtains a functor $$-^+:  \mathcal{F}_{D R}^{A,\mu} \longrightarrow \mathcal{F}_{\mathcal{D}^+, R}^{A,\mu}.$$
\item  If $\mathcal{(G, S)}$ also satisfies Axioms $(iv)$, $(vi)$, $(vii$) and $(viii)$ in \ref{axioms}  and the condition $k_2$, then the mark morphism
$$m_F : F_+ \longrightarrow F^+$$ is a morphism in  the category $F^{ A, \mu}_
{\mathcal{E},R}$, where $\mathcal{E = D_+ = D^+}$.
\end{enumerate}
\end{thm}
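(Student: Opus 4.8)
The plan is to dispose of (a) and (b) quickly, since they repackage Lemmas \ref{green_+} and \ref{green^+}, and to concentrate on (c). Indeed, Lemma \ref{green_+} already equips $F_+$ with a Green $A$-fibered structure having product $[X,s]\times[Y,t]=[X\otimes_A Y,\,s\times t]$ and unit $\mathcal{E}_{F_+}=[1,s_{\mathcal{E}_F}]$, and Lemma \ref{green^+} does the same for $F^+$. What remains for (a) and (b) is that $-_+$ and $-^+$ carry a morphism $\eta\colon F\to F'$ of Green functors to one of Green functors. Since $\eta_+$ is already a morphism in $\mathcal{F}^A_{\mathcal{D}_+,R}$, it suffices to check compatibility with products and units, and both follow at once from $\eta_+(s)(x)=\eta_{G_x}(s(x))$: using $(G\times H)_{x\otimes y}=G_x\times H_y$ and the Green property of $\eta$ one gets $\eta_+(s\times t)(x\otimes y)=\eta_{G_x}(s(x))\times\eta_{H_y}(t(y))=(\eta_+(s)\times\eta_+(t))(x\otimes y)$, while $\eta_+(\mathcal{E}_{F_+})=[1,s_{\mathcal{E}_{F'}}]=\mathcal{E}_{F'_+}$. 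The case of $-^+$ is the verbatim analogue.

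For (c), recall that $m_F$ is already a morphism in $\mathcal{F}^A_{\mathcal{E},R}$, so it remains only to check that each $m_{F,G}$ respects products and units. The unit is immediate: as $\mathcal{M}^\prime(1)=\{(1,1)\}$, the sole component of $m_{F,1}(\mathcal{E}_{F_+})=m_{F,1}([1,s_{\mathcal{E}_F}])$ is $F(Res^1_1)(\mathcal{E}_F)=\mathcal{E}_F=\mathcal{E}_{F^+}$. For the product I would compute the $(T,\alpha)$-component, $(T,\alpha)\in\mathcal{M}^\prime(G\times H)$, of both sides of $m_{F,G\times H}([X,s]\times[Y,t])=m_{F,G}([X,s])\times m_{F,H}([Y,t])$. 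On the left, $[X,s]\times[Y,t]=[X\otimes_A Y,\,s\times t]$, and since $(G\times H)_{x\otimes y}=G_x\times H_y$ with stabilizing character $\phi_{x\otimes y}=\phi_x\times\phi_y$, this component equals $\sum F(Res^{G_x\times H_y}_T)(s(x)\times t(y))$, summed over $A$-orbit representatives $x\otimes y$ of $X\otimes_A Y$ with $T\leq G_x\times H_y$ and $\alpha=(\phi_x\times\phi_y)|_T$.

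On the right, expanding the two mark morphisms and applying the $F^+$-product of Lemma \ref{green^+}, the component is $F(Res^{p(T)}_T)(a_{(p_1(T),\lambda)}\times b_{(p_2(T),\beta)})$ with $\alpha=(\lambda\times\beta)|_T$, where $a=m_{F,G}([X,s])$ and $b=m_{F,H}([Y,t])$. Inserting the definitions of $a_{(p_1(T),\lambda)}$ and $b_{(p_2(T),\beta)}$, then using bilinearity with the functoriality axiom of $F$ in the form $F(Res^{G_x}_{p_1(T)})(s(x))\times F(Res^{H_y}_{p_2(T)})(t(y))=F(Res^{G_x\times H_y}_{p(T)})(s(x)\times t(y))$, and finally the transitivity $Res^{p(T)}_T\circ Res^{G_x\times H_y}_{p(T)}=Res^{G_x\times H_y}_T$, the right side collapses to a sum of the same summands $F(Res^{G_x\times H_y}_T)(s(x)\times t(y))$ as the left.

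The two sides therefore agree once the index sets are identified, which is the only delicate point. I would invoke the bijection $[X/A]\times[Y/A]\cong[(X\otimes_A Y)/A]$, $(x,y)\mapsto x\otimes y$, so that both sums run over the same representatives, and then verify that the conjunction $p_1(T)\leq G_x$, $\lambda=\phi_x|_{p_1(T)}$, $p_2(T)\leq H_y$, $\beta=\phi_y|_{p_2(T)}$, $\alpha=(\lambda\times\beta)|_T$ from the right is equivalent to $T\leq G_x\times H_y$, $\alpha=(\phi_x\times\phi_y)|_T$ from the left. The forward implication is clear; for the converse one sets $\lambda=\phi_x|_{p_1(T)}$, $\beta=\phi_y|_{p_2(T)}$ and checks $(\lambda\times\beta)|_T=\alpha$ pointwise. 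I expect the main obstacle to be exactly this bookkeeping of the fibre characters across the two product constructions: condition $k_2$ is what makes the decomposition $\alpha=(\lambda\times\beta)|_T$ unambiguous, so that the product actually produced is the $F^+$-product of Lemma \ref{green^+}. Everything else is formal, resting on the associativity, functoriality, and transitivity of restriction already established for $F$ and for the two constructions.
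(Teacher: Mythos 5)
Your proposal is correct and follows essentially the same route as the paper: parts (a) and (b) are reduced to Lemmas \ref{green_+} and \ref{green^+}, and part (c) is proved by comparing the $(T,\alpha)$-components of $m_{F,G\times H}([X\otimes_A Y, s\times t])$ and of $m_{F,G}([X,s])\times m_{F,H}([Y,t])$, using $(G\times H)_{x\otimes y}=G_x\times H_y$, the functoriality of the product of $F$, transitivity of restriction, and the identification of $[X/A]\times[Y/A]$ with a set of representatives of the $A$-orbits of $X\otimes_A Y$ — exactly the computation the paper carries out. Your additional checks (that $\eta_+$ respects products and units, and that $m_{F,1}$ sends $\mathcal{E}_{F_+}$ to $\mathcal{E}_{F^+}$) are points the paper leaves implicit, but they do not change the argument.
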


\begin{proof}
We have $(a)$ and $(b)$ are consequence of  Lemma \ref{green_+} and Lemma \ref{green^+}.
Now, we will show that $(c)$. Let $G, H\in \mathcal{G}$, we need to prove that the next diagram commute 
\begin{equation*}
\xymatrix{
F_+(G)\times F_+(H)\ar[r]_{   m_{F,G}\times m_{F,H} } \ar[d]_{\times } &  F^+(G)\times F^+(H) \ar[d]^{\times}\\
F_+(G\times H)\ar[r]_{m_{F,G\times H}} & F^+(G\times H) 
}
\end{equation*}
Let $[X,s]\in F_+(H)$ and $[Y,t]\in F_+(H)$.  First,  the $(T,\alpha)$-component  \-of $m_{F, G\times H}([X\otimes_A Y, s\times t])$ is equal to 
\begin{align*}
\sum_{\substack{x\otimes y\in [X\otimes Y]\\ (T\alpha)\leq (G_x\times H_y, \phi_x \times \phi_y)}} F(Res^{G_x \times H_y}_T ) (s(x)\times t(y))
\end{align*}
On the other hand, the $(T,\alpha)$-component of $m_{F,G}([X.,s]) \times m_{F,H}([Y,t])$ is equal to 
\begin{align*}
F(Res^{p_1(T)\times p_2(T)}_T)(m_{F,G}([X,s])_{(p_1(T),\lambda)}\times m_{F,H}([Y,t])_{(p_2(T), \beta) })
\end{align*}
where $\alpha = \lambda \times \beta$, $m_{F,G}([X,s])_{(p_1(T),\lambda)}$ is the $(p_1(T),\lambda)$-component of $m_{F,G}([X,s])$ and $m_{F,H}([Y,t])_{(p_2(T), \beta )}$ is the  $(p_2(T), \beta )$-component of 
$m_{F,H}([Y,t])$, by definition of mark morphism, this is equal to 
\begin{align*}
\sum_{\substack{x\in [X]\\y \in [Y]\\(p_1(T),\lambda)\leq (G_x, \phi_x)\\(p_2(T), \beta )\leq (H_y,\phi_y)}} F(Res^{G_x\times H_y}_T)(s(x)\times t(y)),
\end{align*}
one has, the set $\lbrace x\otimes y \in X\otimes_A Y \mid x\in [X] \text{ and }  y \in [Y]\rbrace$ is a set of representatives of $A\backslash  X\otimes_A Y /G\times H$, then $(T,\alpha)$-component of $m_{F,G}([X.,s]) \times m_{F,H}([Y,t])$  is equal to the $(T,\alpha)$-component of $m_{F, G\times H}([X\otimes_A Y, s\times t])$.
\end{proof}

\section{example } \label{example fibre}
In this section, we will define some fibered biset functors  and we will study the relationship between  their  construction $-_+$ previously defined  and  these functors. The first functor we will define is the character ring functor (see the  section  11B. of  \cite{fibered}) and the second functor is the global representation fibered ring functor, this functor is  inspired  by  the global representation ring functor. 

\subsection{The Functor $\mathbb{K}R_\mathbb{C}$}
Let $A=\mathbb{C}^\times$. For a finite group $G$, we denote the ring of characters of $\mathbb{C}[G]$-modules by $R_{\mathbb{C}}(G)$. The linearization mapping can be defined as follows:
\begin{align*}
lin_G : B^{\mathbb{C}^\times} (G) &\longrightarrow R_{\mathbb{C}}(G)\\
[H,\phi]G &\longmapsto Ind^G_H (\phi).
\end{align*}
By the Brauer Induction Theorem, this morphism is surjective. From Theorem 1.1 of \cite{serge-biset} and Corollary 2.5 of \cite{fibered},

\begin{align*}
lin_{G\times H} ([X])\otimes_{\mathbb{C} H}  lin_{H\times K} ([Y]) =lin_{G\times K} ([X\otimes_{\mathbb{C} H} Y])
\end{align*}
in $R_{ \mathbb{C}} (G\times K)$. Consequently, the relation $G \longrightarrow R_\mathbb{C} (G)$ defines a functor of $\mathbb{C}^\times$-fibered sets, denoted by $R_{\mathbb{C}}$. This functor sends a basic element $\left[ \frac{G \times H}{ U,\phi} \right]$ of $B^{\mathbb{C}^\times} (G,H)$ to the mapping:
 \begin{align*}
R_\mathbb{C} \left( \left[  \frac{G \times H}{ U,\phi} \right]\right)  :R_\mathbb{C} (H)& \longrightarrow R_\mathbb{C} (G)\\
 [M] &\longmapsto [Ind_U^{G\times H} (\mathbb{C}_\phi) \otimes_{\mathbb{C}H} M],
 \end{align*}
where $\mathbb{C}_\phi$ is the set $\mathbb{C}$ where $U$ acts by the right through $\phi$.

\subsection{The global representation fibered ring functor } 
Now, we define the global representation fibered ring functor.  It is important, recall the global
representation ring is defined  in  \cite{raggiglobal} and  this ring   have  a biset functor structure (see  \cite{karleyTesis}).   The next functor   is inspired in the  the global representation ring  functor.  If an analogy is made, it would be like the Burnside functor and the  Burnside fibered  functor. 
\begin{defi}
    
Given $G$ a group and $X$ a $G$-set, a $\mathbb{C}G$-module $V$ is said to be $X$-graded if
\begin{align*}
    V=\bigoplus_{x\in X} V_x,
\end{align*}
where each $V_x$ is a $\mathbb{C}$-vector subspace such that $gV_x=V_{gx}$ for every $g\in G$ and $x\in X$.
\end{defi}
\begin{defi}
Let be $G$ a finite group. We denote by  $\aleph_G$ the category whose objects are
the pairs  $( X, V )$  where  $X$
is an  $A$-fibered  $G$-set and    $V$  is an $X/A$-graded $\mathbb{C}G$-module,  where  $X/A$ is the set of all   $A$-orbits of   $X$. The morphisms in $\aleph_G$  from $(X, V )$ to $(Y, W)$
are pairs of morphisms $(\alpha, f)$ where $\alpha : X \longrightarrow Y $ is a morphism of $A$-fibered $G$-sets, $f : V\longrightarrow  W$ is a morphism of $\mathbb{C}G$-modules and $f(V_x)  \subseteq W_{\alpha(x)}$ for all $x$ in $X$. The composition is the natural composition  and the identity  morphism is the pair $(Id_X, Id_V)$, where $1_X$ is the identity function of the $G$-set $A$-fibered $X$, and $1_V$ is the identity function of the $\mathbb{C}G$-module $V$. We denote  $[ X, V ]$  the isomorphism class of   $( X, V )$ in $\aleph_G$ .
\end{defi}
The category  $\aleph_G$   has a coproduct defined  by  $$(X,V)\oplus (Y, W)=(X\sqcup Y, V\oplus W),$$ where $\sqcup$ is the disjoint union  of the sets and  $\oplus$ is the direct sum of modules. 
We defined  $T^A(G):=G_0(\aleph_G, \sqcup)$  the Grothendieck group, $T^A(G)$ has structure of ring with the product $[X, V]\cdot[Y,W]:=[X\times Y, V\otimes W]$
\begin{defi}
We define the global $A$-fibered  representation ring of the group $G$ as the quotient
\begin{align*}
\Game ^A (G) = \dfrac{T^A(G)}{ \left\langle [X,V\oplus W]-[X,V]-[X,W]\right\rangle_\mathbb{Z} }.
\end{align*}
We also write $[H, V ]$ to denote elements of $\Game^A(G)$.
\end{defi}
\begin{defi}
We define  the global $A$-fibered  representation ring functor  
\begin{align*}
\Game^A :R\mathcal{C}^A &\longrightarrow R\text{-Mod}\\
G&\longmapsto \Game^A(G)
\end{align*}
Let   be $G$,   $H$ objects of $ R\mathcal{C}^A$, a let be   $U$ an  $A$-fibered $(G,H)$-biset  element of  $Hom_{\mathcal{D}} (H, G)$,   we define the map 
\begin{align*}
\Game^A(U): \Game^A(H) &\longrightarrow \Game ^A(G)\\
[X,V]_H&\longmapsto [U\otimes_{AH} X, \mathbb{C}U \otimes_{ \mathbb{C} AH } V]_G
\end{align*}
where  $A$ act over  $V$  by the trivial action   $V$. 
The group $\Game(G)$ can be endowed with a multiplicative structure as follows:
\begin{align*}
[X, V]\cdot [Y,W]:=[X\otimes_A Y, V\otimes_\mathbb{C} W].
\end{align*}

Now, let's proceed with the definition.
\end{defi} 
The functor $\Game^A$ is well-defined  such that   the  tensor product of $A$-fibered biset respects the isomorphism classes of $A$-fibered bisets and the direct sum of  modules also with the isomorphism classes of  modules. If $V=\oplus_{x\in X/A} V_x$, one has  
\begin{align*}
\mathbb{C}U \otimes_{ \mathbb{C} AH } V  \cong \bigoplus_{u\otimes x\in U\otimes_{AH} X} u\otimes V_x. 
\end{align*}
where  $ u\otimes V_x:=\lbrace u\otimes v_x \mid v_x \in V_x\rbrace$ is a $\mathbb{C}$-vector subspace. 

\subsection{The functor $ R^{ \mathbb{C}^\times}_{\mathbb{C},+} $}
Let $(\mathcal{G, S})$ as before such that  satisfying Axioms $(i)$-$(iii)$.  To compute the $- _+$ construction of $R_\mathbb{C}$, first  we find the $(\mathbb{C}^\times, G)$-invariant sections of  $R_\mathbb{C}$  over  $\mathbb{C}^\times$ \\ 
Let be   $X$  a $A$-fibered $G$-set  such that  $G_x$ is an element of  $\mathcal{G}$ for all  $x\in X$, then one section of  $R_\mathbb{C}$   over  $X$ is a function  
\begin{align*}
s: X\longrightarrow \bigoplus_{x\in [ X/A]} R_\mathbb{C}(G_x)
\end{align*}
such that $s(x)\in R_\mathbb{C}(G_x)$  for all  $x\in X$. \\
Then the elements of  $R^{ \mathbb{C}^\times}_{\mathbb{C}, +} (G)$ are  the pairs $[X,s]$, where  $X$ is a  $\mathbb{C}^\times$-fibered  $G$-set   and   $s$  is an  $\mathbb{C}^\times \times G$-invariant  section, this means   $^gs(ax)=s(x)$   for all $(a,g)\in \mathbb{C}^\times \times G$.  We define  the  $\mathbb{C}G$-module
\begin{align*}
V_s =  Ind^G_{G_x} s(x)= \bigoplus_{g\in [G/G_x]} g\otimes s(x).
\end{align*}
We can associate the pair $[X, s]_G$ with the pair $[X, V_s]$ in $\Game^A(G)$.

\begin{prop}
The $-_+$  construction of the  $\mathbb{C}^\times $-fibered biset functor  $R^{\mathbb{C}^\times}_\mathbb{C}$, denote  by  $R^{\mathbb{C}^\times}_{\mathbb{C},+}$,  is the  global representation $\mathbb{C}^\times$-fibered ring functor  $\Game^{\mathbb{C}^\times}$. 
\end{prop}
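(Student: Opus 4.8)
The plan is to exhibit a natural isomorphism $\Theta \colon R^{\mathbb{C}^\times}_{\mathbb{C},+} \longrightarrow \Game^{\mathbb{C}^\times}$ of $\mathbb{C}^\times$-fibered biset functors. For each $G \in \mathcal{G}$ I would define, on generators,
\[
\Theta_G([X,s]_G) := [X, V_s], \qquad V_s = \bigoplus_{[x]\in X/A} Ind^G_{G_x}\, s(x),
\]
where the summand $Ind^G_{G_x} s(x)$ attached to a $G$-orbit representative $x$ sits in the grading pieces indexed by the $A$-orbits $g[x]$, $g \in G/G_x$, exactly as in the module $V_s$ displayed before the statement. The inverse sends $[X,V]$ with $V = \bigoplus_{[x]\in X/A} V_{[x]}$ to the class of $(X, s_V)$, where $s_V(x) := [V_{[x]}] \in R_{\mathbb{C}}(G_x)$: this is a section because $g[x]=[x]$ for $g\in G_x$ forces $V_{[x]}$ to be a genuine $\mathbb{C}G_x$-module, and it is $(\mathbb{C}^\times\times G)$-invariant since $g V_{[x]} = V_{g[x]}$ yields $F(Iso(C_g))(s_V(g^{-1}ax)) = s_V(x)$ for the conjugation isomorphism $C_g\colon G_{g^{-1}x}\to G_x$.

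First I would check that $\Theta_G$ and its inverse descend to the respective quotients. On a disjoint union $V_{s\sqcup t}=V_s\oplus V_t$, and on a pointwise sum $V_{s+r}=V_s\oplus V_r$ grade by grade (using that $R_{\mathbb{C}}(G_x)$ is a Grothendieck group), so the two families of relations in (\ref{cociente}) are carried into the coproduct relation and the relation $[X,V\oplus W]-[X,V]-[X,W]$ defining $\Game^{\mathbb{C}^\times}(G)$; the symmetric bookkeeping shows $\Theta_G^{-1}$ respects the defining relation of $\Game^{\mathbb{C}^\times}(G)$. That the maps are mutually inverse is then immediate: $\Theta_G^{-1}$ applied to $[X,V_s]$ returns the grade-$[x]$ piece $[(V_s)_{[x]}]=s(x)$, and $\Theta_G$ applied to $[X,s_V]$ reconstructs $V$ one $G$-orbit of $A$-orbits at a time via $Ind^G_{G_x}V_{[x]}$.

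The heart of the argument, and the main obstacle, is naturality: for $G,H\in\mathcal{G}$ and an $A$-fibered $(G,H)$-biset $U$ with all stabilizing pairs in $\mathcal{S}_+(G,H)$ I must show
\[
\Theta_G\bigl(R^{\mathbb{C}^\times}_{\mathbb{C},+}([U])([X,s])\bigr)=\Game^{\mathbb{C}^\times}([U])\bigl(\Theta_H([X,s])\bigr).
\]
Unwinding definitions, the left side is $[U\otimes_{AH}X,\;V_{U(s)}]$ and the right side is $[U\otimes_{AH}X,\;\mathbb{C}U\otimes_{\mathbb{C}AH} V_s]$, so everything reduces to a natural isomorphism of $(U\otimes_{AH}X)/A$-graded $\mathbb{C}G$-modules $V_{U(s)}\cong \mathbb{C}U\otimes_{\mathbb{C}AH} V_s$. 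As both functors are generated by their values on transitive bisets, I would verify this by taking $U=[G\times H/(D,\phi)]$ and $[X,s]=[K,\psi,a]_H$, comparing the explicit double-coset expansion (\ref{F_+[trans](---)}) for the left side with the Mackey decomposition of $\mathbb{C}U\otimes_{\mathbb{C}AH}Ind^H_K a$ on the right. The double cosets $h\in[p_2(D)\backslash H/K]$ satisfying $\phi_2\vert_{X_h}={}^h\psi_2\vert_{X_h}$ index exactly the $A$- and $G$-orbit representatives of $U\otimes_{AH}X$ (using that $U$ is right-free and Lemma \ref{estProd}), the stabilizing pairs match as $(D\ast{}^hK,\phi\ast{}^h\psi)$, and the linearization formula for $R_{\mathbb{C}}$ together with Lemma \ref{estProd} identifies the $G$-module induced from $R_{\mathbb{C}}([\tfrac{D\ast{}^hK\times{}^hK}{D\ast\Delta({}^hK),\phi}])({}^ha)$ with the corresponding graded summand of $\mathbb{C}U\otimes_{\mathbb{C}AH} V_s$. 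The delicate point is tracking the fibering characters through the tensor product $\mathbb{C}U\otimes_{\mathbb{C}AH}-$ and confirming they agree with the $\phi\ast{}^h\psi$ produced by $-_+$; this is precisely where $A$-freeness of $U$ and the $(\mathbb{C}^\times\times G)$-invariance of sections are used. Finally I would record that $\Theta$ is multiplicative: by the Green product on $R_{\mathbb{C}}$ (Lemma \ref{green_+}) and the compatibility of induction with external tensor products one gets $V_{s\times t}\cong V_s\otimes_{\mathbb{C}} V_t$, matching the product $[X,V]\cdot[Y,W]=[X\otimes_A Y,V\otimes_{\mathbb{C}} W]$ on $\Game^{\mathbb{C}^\times}$, and $\Theta_1$ sends the identity element of $R^{\mathbb{C}^\times}_{\mathbb{C},+}$ to that of $\Game^{\mathbb{C}^\times}$; hence $\Theta$ is an isomorphism of Green $\mathbb{C}^\times$-fibered biset functors, proving $R^{\mathbb{C}^\times}_{\mathbb{C},+}\cong\Game^{\mathbb{C}^\times}$.
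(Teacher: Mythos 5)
Your proposal is correct and follows essentially the same route as the paper: the isomorphism is the same assignment $[X,s]_G\mapsto[X,V_s]$ with $V_s=Ind^G_{G_x}s(x)$, and naturality reduces in both cases to the identification $V_{U(s)}\cong\mathbb{C}U\otimes_{\mathbb{C}AH}V_s$ of graded modules (which the paper checks by a direct orbit-by-orbit computation rather than your reduction to transitive bisets and the double-coset formula, but the content is identical). You supply several details the paper leaves implicit — the explicit inverse $[X,V]\mapsto[X,s_V]$, the descent through the defining relations of both quotients, and the compatibility with the Green structure — all of which are consistent with the paper's argument.
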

\begin{proof}
  Let be $U $   an  $\mathbb{C}^\times$-fibered $(G,H)$-biset such that  all stabilizer pairs of $U$ are elements  of  $\mathcal{S}(G,H)$,   we define the map 
\begin{align*}
R_{\mathbb{C},+}\left( [U]  \right) : R_{\mathbb{C},+}(H) &\longrightarrow R_{\mathbb{C},+}(G)\\
[X, s]_H &\longrightarrow[ U \otimes_{\mathbb{C}^\times H}X, U(s)]_G
\end{align*}
where
\begin{align*}
U(s)(u\otimes x) &= R_\mathbb{C} \left( \dfrac{ G_{u\otimes x} \times H_x}{(G\times H_x)_u, \phi_u}\right) (s(x))\\
&=[Ind^{G_{u\otimes x} \times H_x}_{(G \times H_x)_u} (\mathbb{C}_{\phi_u } )\otimes _{\mathbb{C}^\times H_x }s(x)]
\end{align*}
 note that 
\begin{align*}
Ind^{G_{u\otimes x} \times H_x}_{(G \times H_x)_u} (\mathbb{C}_{\phi_u})  = \mathbb{C} \left[ \dfrac{G_{u \otimes x} \times H_x }{ ( G\times H_x)_u, \phi_u }\right] ,
\end{align*} 
then
 \begin{align*}
V_{ U(s)} = &\bigoplus_{u\otimes x \in [ U \otimes_{\mathbb{C}^\times  H }  X/ {\mathbb{C}^\times}]} \mathbb{C} \left[ \dfrac{G_{u \otimes x} \times H_x }{ ( G\times H_x)_u, \phi_u }\right]  \otimes_{AH_x} s(x)\\
=&\bigoplus_{u\otimes x \in [ U \otimes_{\mathbb{C}^\times  H }  X/ {\mathbb{C}^\times}]} u\otimes_{AH_x} s(x)\\
=&\bigoplus_{u\otimes x \in [( U/\mathbb{C}^\times) \otimes_{ A H}  (X/ {\mathbb{C}^\times)}]} u\otimes_{AH_x} s(x).
\end{align*}  
On the other hand, one has  
\begin{align*}
\mathbb{C}U \otimes_{\mathbb{C}^\times H} V_s = \mathbb{C}U \otimes_{\mathbb{C}^\times H}  \bigoplus_{x\in [X/A]} s(x)
\end{align*}
by the Definition 3.1 of  \cite{karleyTesis},  one has  
\begin{align*}
\mathbb{C}U \otimes_{\mathbb{C}^\times H} V_s  = &\bigoplus_{u\otimes x \in [U\otimes_{\mathbb{C}^ \times H } (X/\mathbb{C}^\times)] } u\otimes s(x)\\
= &\bigoplus_{u\otimes x \in [(U/\mathbb{C}^\times )\otimes_{A H } (X/\mathbb{C}^\times)] } u\otimes s(x)\\
\end{align*}
We can define the map
\begin{align*}
R_{\mathbb{C}, +}^{\mathbb{C}^ \times} (G) &\longrightarrow \Game^{\mathbb{C}^\times} (G)\\
[X,s]_G &\longmapsto  [X,V_s]_G.
\end{align*}
By the last remark,  we can extend  this map  to natural transformation  and by the definition of  the functors  $R_{\mathbb{C}, +}^{\mathbb{C}^ \times} $ and  $ \Game^{\mathbb{C}^\times}$, this natural transformation is injective and  suryectiva   for all evaluation.  Thus,    the functor  $R_{\mathbb{C}, +}^{\mathbb{C}^ \times}$ is isomorphic  to  $\Game^{\mathbb{C}^\times}$.

 \end{proof}

\section{Adjunciones} \label{adjuncion}
In this section, we will show that the $-_+$ construction has an adjoint.
Let  $R$ be  a commutative ring with unit,  and let  $\mathcal{(G,S)}$  satisfy axioms $(i)$, $(ii)$, $(iii)$, $(iv)$ in \ref{axioms}. We will denote $\mathcal{D=C}^A\mathcal{(G,S)}$.
\begin{defi} 
For all  $G$, $H$ in $\mathcal{G}$,  we set 
$$\mathcal{S}_-(G,H):=\lbrace (D,\phi)\in \mathcal{S}(G,H) \mid p_1(D)=G\rbrace.$$ 
\end{defi}

\begin{lem}
With the previous hypotheses, we have that the pair $\mathcal{(G,S_-)}$ satisfies axioms $(i)$ to $(iv)$ of \ref{axioms}. Moreover, $\mathcal{(D_-)+=D+}$.
\end{lem}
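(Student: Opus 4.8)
The plan is to verify the four axioms for $(\mathcal{G},\mathcal{S}_-)$ one at a time, exploiting the inclusion $\mathcal{S}_-(G,H)\subseteq \mathcal{S}(G,H)$ together with the single extra requirement $p_1(D)=G$, and then to identify $(\mathcal{S}_-)_+$ with $\mathcal{S}_+$ directly from the definitions. Axioms $(i)$ and $(ii)$ are immediate. For $(i)$ one has $(\Delta(G),1)\in\mathcal{S}(G,G)$ and $p_1(\Delta(G))=G$, so $(\Delta(G),1)\in\mathcal{S}_-(G,G)$. For $(ii)$, if $(D,\phi)\in\mathcal{S}_-(G,H)$ and $(g,h)\in G\times H$, then $^{(g,h)}(D,\phi)\in\mathcal{S}(G,H)$ by Axiom $(ii)$ for $\mathcal{S}$, and $p_1(^{(g,h)}D)={}^g p_1(D)={}^g G=G$, so the conjugate still lies in $\mathcal{S}_-(G,H)$.

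Axiom $(iv)$ is also short once one recalls the notational convention $D\ast K:=p_1(D\ast\Delta(K))$ used in Proposition \ref{D_+}. Given $(D,\phi)\in\mathcal{S}_-(G,H)$ and $K\in\Sigma_{\mathcal{G}}(H)$, Axiom $(iv)$ for $\mathcal{S}$ yields $D\ast K\in\mathcal{G}$ and $(D,\phi)\ast(\Delta(K),1)=(D\ast\Delta(K),\phi\ast 1)\in\mathcal{S}(D\ast K,K)$. Since by definition $p_1(D\ast\Delta(K))=D\ast K$, the first-projection condition defining $\mathcal{S}_-$ is automatically met, so $(D\ast\Delta(K),\phi\ast 1)\in\mathcal{S}_-(D\ast K,K)$, as required.

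The one step requiring a genuine (if small) argument is Axiom $(iii)$, and this is where I expect the only real obstacle. Let $(V,\phi)\in\mathcal{S}_-(G,H)$ and $(U,\psi)\in\mathcal{S}_-(H,K)$ satisfy the compatibility condition $\phi_2|_{k_2(V)\cap k_1(U)}=\psi_1|_{k_2(V)\cap k_1(U)}$. Axiom $(iii)$ for $\mathcal{S}$ gives $(V\ast U,\phi\ast\psi)\in\mathcal{S}(G,K)$, so it only remains to check $p_1(V\ast U)=G$. The inclusion $p_1(V\ast U)\subseteq p_1(V)=G$ is automatic, so I need the reverse. Fix $g\in G$; since $p_1(V)=G$ there is $h\in H$ with $(g,h)\in V$, whence $h\in p_2(V)\subseteq H=p_1(U)$ (here is exactly where $p_1(U)=H$, that is $(U,\psi)\in\mathcal{S}_-$, is used), so there is $k\in K$ with $(h,k)\in U$; then $(g,k)\in V\ast U$ and $g\in p_1(V\ast U)$. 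Hence $p_1(V\ast U)=G$ and $(V\ast U,\phi\ast\psi)\in\mathcal{S}_-(G,K)$.

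Finally, for the identity $(\mathcal{D}_-)_+=\mathcal{D}_+$ it suffices to show $(\mathcal{S}_-)_+(G,H)=\mathcal{S}_+(G,H)$ for all $G,H\in\mathcal{G}$. Unwinding Definition \ref{defi S_+} applied to $\mathcal{S}_-$, a pair $(D,\phi)\in\mathcal{M}(G\times H)$ lies in $(\mathcal{S}_-)_+(G,H)$ iff $p_1(D)\in\mathcal{G}$ and $(D,\phi)\in\mathcal{S}_-(p_1(D),H)$. But $(D,\phi)\in\mathcal{S}_-(p_1(D),H)$ means $(D,\phi)\in\mathcal{S}(p_1(D),H)$ together with the condition that the first projection of $D$ equals $p_1(D)$, which holds tautologically. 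Thus $(D,\phi)\in\mathcal{S}_-(p_1(D),H)\iff(D,\phi)\in\mathcal{S}(p_1(D),H)$, and the defining condition of $(\mathcal{S}_-)_+$ collapses exactly to that of $\mathcal{S}_+$; hence $(\mathcal{S}_-)_+=\mathcal{S}_+$ and $(\mathcal{D}_-)_+=\mathcal{D}_+$. Note that the verification of the first part guarantees that $\mathcal{S}_-$ satisfies Axiom $(iv)$, so the $+$-construction is legitimately applicable to $(\mathcal{G},\mathcal{S}_-)$ in the first place.
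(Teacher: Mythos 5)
Your proof is correct and follows essentially the same route as the paper: axioms $(i)$, $(ii)$, $(iv)$ and the identity $(\mathcal{S}_-)_+=\mathcal{S}_+$ are handled exactly as in the paper's argument, the key points being $p_1(D\ast\Delta(K))=D\ast K$ and the tautological equivalence $\mathcal{S}_-(p_1(D),H)=\mathcal{S}(p_1(D),H)$. The only difference is that you explicitly verify $p_1(V\ast U)=G$ for axiom $(iii)$ (using $p_2(V)\subseteq H=p_1(U)$), a detail the paper leaves implicit behind an appeal to the definitions and the Mackey formula.
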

 \begin{proof}
by definition of  $\mathcal{S}_-$, and the Axioms of   $(\mathcal{G,S})$   and the Mackey formula,   one has $(\mathcal{G,S_-)}$  satisfies  axioms  $(i)$, $(ii)$ and $(iii)$  of \ref{axioms}. 
For the axiom  $(iv)$. Let $G$  and $H$ elements of  $\mathcal{G}$,  for any   $(D,\phi)\in \mathcal{S}_-(G,H)$, one has  $(D,\phi) \in \mathcal{S}(G,H)$  and $p_1(D)=G$.  By the axioms  $(iv)$ of  $(\mathcal{G,S})$, one has for any  $K\in \sum_\mathcal{G} (H)$,   we have $D\ast K \in \mathcal{G}$,   $(D\ast \Delta(K), \phi \ast 1) \in \mathcal{S}(D\ast K, K) $ and note that    $p_1(D\ast \Delta(K)) = D\ast K$, e
the  $(D\ast \Delta(K), \phi \ast 1) \in \mathcal{S_-}(D\ast K, K) $. Thus,  the data  $(\mathcal{G,S_-})$ satisfies the axioms  $(i)$, $(ii)$, $(iii)$ and  $(iv)$. We define the new category  $\mathcal{D_-=C}^A(\mathcal{G,S_-})$. \\ 
 By definition of $\mathcal{S}_-$, one has  $(\mathcal{D_-})_+ \subseteq \mathcal{D}_+$.  On the other hand,  let  $(D,\phi) \in \mathcal{S}_+(G,H)$, then  $p_1(D) \in \mathcal{G}$  and  $ (D,\phi) \in \mathcal{S} (p_1(D),H)$ .i.e.  $(D,\phi)  \in \mathcal{S_-} (p_1(D),H)$,   therefore  $(D,\phi)  \in (\mathcal{S}_-)_+ (G,H)$. Thus,  $(\mathcal{D_-})_+=\mathcal{D}_+$
 \end{proof}
We can note that  $\mathcal{D_-:=C}^A( \mathcal{S_-, G}) \subseteq \mathcal{D\subseteq D_+}$.\\
Let  $F\in \mathcal{F}^A_{\mathcal{D_-}, R}$ and let  $G\in \mathcal{G}$,  we define  the map  
\begin{align*}
\eta_{F,G}:F(G)&\longrightarrow Res^{\mathcal{D}_+}_{\mathcal{D}_-}(F_+(G))\\
a&\longmapsto [G,1,a]_G
\end{align*}
\begin{lem}
Suppose that  $\mathcal{(G, S}) $ satisfies the above hypotheses and the axioms $(vi)$.  Then, the functor  $-_+ : \mathcal{F}^A_{\mathcal{D_-}, R}\longrightarrow \mathcal{F}^A_{\mathcal{D_+}, R} $ is the left adjunction of the restriction functor, $ Res^{\mathcal{D}_+}_{\mathcal{D}_-}: \mathcal{F}^A_{\mathcal{D}_-, R} \longrightarrow \mathcal{F}^A_{\mathcal{D}_+,R }$.
\end{lem}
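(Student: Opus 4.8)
The plan is to exhibit, for every $F\in\mathcal{F}^A_{\mathcal{D}_-,R}$ and every $E\in\mathcal{F}^A_{\mathcal{D}_+,R}$, a natural bijection
$$\mathrm{Hom}_{\mathcal{F}^A_{\mathcal{D}_+,R}}(F_+,E)\;\cong\;\mathrm{Hom}_{\mathcal{F}^A_{\mathcal{D}_-,R}}\big(F,\mathrm{Res}^{\mathcal{D}_+}_{\mathcal{D}_-}E\big),$$
with unit given by the maps $\eta_{F,G}\colon a\mapsto[G,1,a]_G$ defined just before the statement. First I would check that $\eta_F=(\eta_{F,G})_{G\in\mathcal{G}}$ is a morphism in $\mathcal{F}^A_{\mathcal{D}_-,R}$, and then establish the universal property: every $\theta\colon F\to\mathrm{Res}^{\mathcal{D}_+}_{\mathcal{D}_-}E$ factors uniquely as $\mathrm{Res}^{\mathcal{D}_+}_{\mathcal{D}_-}(\tilde\theta)\circ\eta_F$ for a unique $\tilde\theta\colon F_+\to E$ in $\mathcal{F}^A_{\mathcal{D}_+,R}$.

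For naturality of $\eta_F$, since $\mathrm{Hom}_{\mathcal{D}_-}$ is spanned by classes of transitive bisets $[\,G\times H/(D,\phi)\,]$ with $(D,\phi)\in\mathcal{S}_-(G,H)$, i.e.\ with $p_1(D)=G$, it suffices to evaluate $F_+([D,\phi])\circ\eta_{F,H}$ and $\eta_{F,G}\circ F([D,\phi])$ on an element $a\in F(H)$ and compare. Here I would apply the explicit formula (\ref{F_+[trans](---)}) to $[H,1,a]_H$; the constraint $p_1(D)=G$ collapses the double-coset sum, and together with the condition $k_2$ this makes the surviving term exactly $[G,\phi\ast 1,F([D,\phi])(a)]_G=\eta_{F,G}(F([D,\phi])(a))$.

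The heart of the argument is the universal property. Given $\theta$, I would define $\tilde\theta_G$ on the generators $[H,\phi,x]_G$ (with $H\in\Sigma_{\mathcal G}(G)$, $\phi\in H^\ast$, $x\in F(H)$) of $F_+(G)$ by $\tilde\theta_G([H,\phi,x]_G):=E\big([\,G\times H/(\Delta(H),\varphi)\,]\big)(\theta_H(x))$, where $\varphi$ extends $\phi$ to $\Delta(H)$ and $[\,G\times H/(\Delta(H),\varphi)\,]$ is the twisted–induction morphism, which lies in $\mathcal{D}_+$ since $p_1(\Delta(H))=H\in\mathcal G$. The key structural input is the factorization recorded in the Remark after Definition~\ref{defi S_+}: every $(D,\phi)\in\mathcal{S}_+(G,H)$ splits as $(\Delta(p_1(D)),1)\ast(D,\phi)$, so that each $\mathcal{D}_+$-morphism is an induction $\mathrm{Ind}^G_{p_1(D)}$ composed with a $\mathcal{D}_-$-morphism. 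Using this, naturality of $\tilde\theta$ over all of $\mathcal{D}_+$ reduces to two points: naturality over $\mathcal{D}_-$, which follows from that of $\theta$ together with the special cases of $\mathrm{Res}$, $\mathrm{Inf}$, $\mathrm{Def}$ recorded after formula (\ref{F_+[trans](---)}); and compatibility with induction, which is immediate from $F_+(\mathrm{Ind}^G_H)([K,\psi,a]_H)=[K,\psi,a]_G$ and the functoriality of $E$. Uniqueness is then forced, because $[H,\phi,x]_G=F_+\big([\,G\times H/(\Delta(H),\varphi)\,]\big)(\eta_{F,H}(x))$ leaves no freedom in the value of $\tilde\theta_G$ once naturality is imposed.

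The main obstacle will be proving that $\tilde\theta_G$ is well defined, i.e.\ that the assignment on generators respects the defining relations (\ref{cociente}) of $F_+(G)$ and the $G$-conjugacy identifications, and that the resulting family is genuinely natural with respect to \emph{all} of $\mathcal{D}_+$. This amounts to a Mackey-type comparison between formula (\ref{F_+[trans](---)}) for $F_+$ and the corresponding transport in $E$, and it is precisely where axiom $(vi)$ (guaranteeing that $\mathcal{D}_-$ contains the relevant restrictions, so $\theta$ can be transported along them) and the condition $k_2$ are used. Finally I would verify that the bijection is natural in both $F$ and $E$, which is routine once $\eta_F$ and the assignment $\theta\mapsto\tilde\theta$ have been shown to be mutually inverse.
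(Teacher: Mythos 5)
Your proposal follows essentially the same route as the paper: the unit is the same map $\eta_{F,G}(a)=[G,1,a]_G$, the transpose is defined on generators by exactly the paper's formula $[H,\phi,x]_G\mapsto E(Ind^G_{H,\phi})(\theta_H(x))$ (the paper packages this as a sum over $A$-orbits of a general $[X,s]$ and then checks the relations (\ref{cociente}), which is the same well-definedness issue you flag), and uniqueness/injectivity is obtained in both cases from the identity $[H,\phi,x]_G=F_+(Ind^G_{H,\phi})(\eta_{F,H}(x))$. The only differences are cosmetic: you phrase the adjunction via the universal property of the unit rather than the explicit hom-set bijection $\varphi\mapsto\varphi\circ\eta_F$, and you invoke condition $k_2$, which is not among the hypotheses of this lemma and is not actually needed (the compatibility condition in the Mackey sum involves $k_2(D)\cap k_1(\Delta(H))=\{1\}$, so it is vacuous here).
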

\begin{proof}
Let  $F\in \mathcal{F}^A_{\mathcal{D_-}, R}$ and $M\in  \mathcal{F}^A_{\mathcal{D_+}, R}$. We will prove that the  map 
\begin{align*}
Hom_{\mathcal{D}_+}(F_+, M)& \longrightarrow Hom_{\mathcal{D}_-}(F, Res^{\mathcal{D_+} }_\mathcal{D_-} (M))\\
\phi&\longmapsto \phi \circ \eta_F
\end{align*}
 is an  isomorphism. First, we will prove that the map is well-defined, that is, that $\phi\circ \eta_F$  is a natural transformation of $F$ 
 to $Res^{\mathcal{D_+}}_{\mathcal{D_-}} M$.  Let  $G$, $H$ finite group in  $\mathcal{G}$ and $(D,\phi)\in S_-(G, H)$,  we will prove that the next diagram is commutative 
  \begin{equation*}
 \xymatrix{ F(H) \ar[r]^{\varphi_H \circ\eta_{F,H}} \ar[d]_{F(X)} & M(H) \ar[d]^{Res^{\mathcal{D_+}}_{\mathcal{D_-}} M(X)} \\ F(G) \ar[r]_{\varphi_G \circ \eta_{F,G}} &  M(G).}
 \end{equation*}
For  $a\in F(H)$ and  $X=\dfrac{G\times H}{D, \phi}$, 
 note that $D \ast H = G$ and $\phi \ast 1(g) = \phi(g,h) 1(h) = \phi(g,h)$ for all $h \in H$ such that $(g,h) \in D$. Therefore, $\phi$ depends only on the first coordinate of the elements in $D$. By the previous diagram, we have that,
\begin{align*}
M(X)\varphi_H(\eta_{F,H} (a))&= M(X) \varphi_H ([H, 1 , a]_H)\\
&=M(tw_{\phi_1}) M \left( \left[  \dfrac{G\times H}{D,1}\right] \right) (\varphi_H([H,1,a]_H))\\
&=M(tw_{\phi_1})\left[ \varphi_G(F_+ \left( \left[  \dfrac{G\times H}{D,1}\right] \right) ([H,1,a]_H)) \right] \\
&=M(tw_{\phi_1})([G,1,F \left( \left[ \dfrac{G\times H}{D,1}\right] \right) (a)]_G).
\end{align*}
On the other hand,
 \begin{align*}
\varphi_G\circ \eta_{F,G} (F(X)(a)) &= [G,\phi, F(X)(a)]_G\\
&=\varphi_G \left(  F_+(tw_{\phi_1}) ([G,1, F \left( \left[ \dfrac{G\times H}{D,1}\right] \right) (a)]_G \right) \\
&=M(tw_{\phi_1})(\varphi_G([G,1, F \left( \left[ \dfrac{G\times H}{D,1}\right] \right) (a)]_G).
 \end{align*}
 Thus, $\varphi \circ  \eta_F $  is a natural transformation. \\
 Now we will prove that the function is injective. Let $\varphi$  and  $ \varphi^\prime \in Hom_{\mathcal{D}_+}(F_+, M)$such that  $\varphi \circ \eta_F=\varphi^\prime \circ \eta_F$. Given a finite group $G\in \mathcal{G}$,  and a basic element   $[H, \psi,a]_G$  of  $F_+(G)$, we have  that  
 \begin{align*}
 \varphi_G( [H, \psi,a]_G) &= \varphi (Ind^G_{H,\psi} [H,1,a]_H)\\
 &=\varphi_G (Ind^G_{H,\psi} \circ \eta_{F, H} (a))\\
 &=M( Ind^G_{H,\psi} )\left( \varphi_H (\eta_{F, H} (a))\right) \\
 &=M( Ind^G_{H,\psi} )\left( \varphi_H ^\prime(\eta_{F, H} (a))\right) \\
 &=\varphi_G^\prime( [H, \psi,a]_G) .
  \end{align*}
Therefore,  $\varphi=\varphi^\prime$,  which means that the previous morphism is injective. Now we will prove that the morphism is surjective. Given  $\psi \in  Hom_{\mathcal{D}_-}(F, Res^{\mathcal{D_+} }_\mathcal{D_-} (M))$. For every,  $G\in \mathcal{G}$   let's consider the function.
 \begin{align*}
 \varphi_G : F_+(G)  &\longrightarrow M(G)\\
[X,s]& \longmapsto\sum_{x\in [G\backslash X /A]} M(Ind^G_{G_x, \phi_x}) (\psi_{G_x} (s(x))).
 \end{align*}
The first thing, we will note  that this definition does not depend on the set of representatives of $[G\backslash X/A]$. This is because if we consider  another representative of the $(G,A)$-orbit of $x$, its stabilizer is a $G$-conjugate of $G_x$, and $s$ is a $(G,A)$-invariant section.  Furthermore, let us note that given $[X, s]$, $[Y, t]$, $[W, r + s]$ in $\Gamma_F(G)$, we have that 
\begin{align*}
\varphi_G([X\sqcup, Y, s+t ])=  \sum_{z\in [G\backslash X \sqcup Y /A]} M(Ind^G_{G_z, \phi_z}) (\psi_{G_z} (s + t(z)))\\
\end{align*}
this is equal to 
\begin{align*}
\sum_{z\in [G\backslash X  /A]} M(Ind^G_{G_x, \phi_z}) (\psi_{G_z} (s(z))) + \sum_{z\in [G\backslash Y /A]} M(Ind^G_{G_z, \phi_z}) (\psi_{G_z} (t(z)))\\\\
\end{align*}
 Therefore,  $\varphi_G([X\sqcup, Y, s+t ]=\varphi_G([X, s ]) + \varphi_G([ Y, t ])$, and since  $M(Ind_{H,\phi}^G)$ is an  $R$-liner function, we have that  
 \begin{align*}
 \varphi_G( [W,r+s]) &= \sum_{w\in [G\backslash W  /A]} M(Ind^G_{G_w, \phi_w}) (\psi_{G_w} ( r+s(w))\\
 & = \sum_{w\in [G\backslash W  /A]} M(Ind^G_{G_w, \phi_w}) (\psi_{G_w} ( s(w))+ M(Ind^G_{G_w, \phi_w}) (\psi_{G_w} ( s(w))\\
 &= \varphi_G([W,r])+\varphi_G([W, s]),
 \end{align*}
 in consequence, the function  $\varphi_G$  is well-defined in  $F_+(G)$. \\
 For any  $G\in \mathcal{G}$   and for any    $a\in F(G)$. we have, 
 \begin{align*}
 \varphi_G(\eta_{F,G}(a))&=\varphi_G([G,1,a]_G)\\
 &=\varphi_G( Ind^G_{G,1})( \psi (a))\\
 &=\psi(a).
 \end{align*}
Finally, we will prove  that the morphism  $\varphi:=(\varphi_G)_{G\in \mathcal{G}}$ is a natural transformation in  $Hom_{\mathcal{D}_+}(F_+, M)$.\\
Let  $G$, $H$  be  finite groups and  let be   $X:=\left[ \dfrac{G\times H}{D,\phi}\right] $ an element of  $Hom_{\mathcal{D_-}}(H,G)$. Now, we will prove that the following diagram commutes: 
 \begin{equation*}
 \xymatrix{ F_+(H) \ar[r]^{\varphi_H} \ar[d]_{F_+(X)} & M(H) \ar[d]^{M(X)} \\ F_+(G) \ar[r]_{\varphi_G} & M(G) }
 \end{equation*}
Let  $[K,f,a]_H \in F_+(H)$, on one hand, we have that 
 \begin{align*}
\varphi_H( F_+(X)([K,f,a]_H))= \sum_{h\in [ p_2(D) \backslash H/K]}  \varphi_G([D\ast {^h K}, \phi \ast {^hf}, b_h]_G),
 \end{align*}
 where   (referring to equation \ref{F_+[trans](---)})
 \begin{align*}
 b_h=F\left( \left[  \dfrac{D \ast {^hK} \times {^hK}}{ D\ast \Delta(^hK ), \phi } \right]\right) (^ha).
 \end{align*}
Thus  
 \begin{align*}
 \varphi_H( F_+(X)([K,f,a]_H))= \sum_{h\in [ p_2(D) \backslash H/K]} M(Ind^G_{D\ast {^hK}, \phi \ast{^hf}}) (\psi_{D\ast {^hK}} (b_h))
 \end{align*}
 since $\psi$ is a natural transformation, we have that 
 \begin{align*}
 \varphi_H( F_+(X)([K,f,a]_H))&= \sum_{h\in [ p_2(D) \backslash H/K]} M \left(Ind^G_{D\ast {^hK}, \phi \ast{^hf}} \otimes \left[\dfrac{D \ast {^hK} \times {^hK}}{ D\ast \Delta(^hK ), \phi }  \right]\right)  (\psi_{ {^hK}} (^ha))\\
 &= \sum_{h\in [ p_2(D) \backslash H/K]} M(\left( \left[\dfrac{G\times ^hK}{D\ast \Delta(^hK), \phi \ast {^hf}}\right] \right)  (\psi_{^hK} ({^ha})).
 \end{align*}
 On the other hand, we have that  
  \begin{align*}
  M(X)(\varphi_H([K,f,a]_H))&=M(X)\left( M(Ind^G_{K,f})(\psi_{K} (a)) \right) \\
  &= \sum_{h\in [ p_2(D) \backslash H/K]} M(\left( \left[\dfrac{G\times ^hK}{D\ast \Delta(^hK), \phi \ast {^hf}}\right] \right)  (^h\psi_{K} (a)).
  \end{align*}
 Since  $\psi$ is a natural transformation, we have that    $^h\psi(a)=\psi(^ha)$. Therefore,  $\varphi$  is a natural transformation, from which we conclude that 
 $$Hom_{\mathcal{D}_+}(F_+, M) \cong Hom_{\mathcal{D}_-}(F, Res^{\mathcal{D_+} }_\mathcal{D_-} (M).$$
\end{proof}

A natural question that arises is: Does the functor $-^+$ have  right adjoint functor? In the case of bisets, it was shown by Boltje, Raggi, and Valero in \cite{constructios+} that the functor $-^+$ does have an adjoint functor.

Please refer to Journal-level guidance for any specific requirements.

\bibliographystyle{plain}
\bibliography{biblio1}
\end{document}